\newcommand{\be}{\begin{eqnarray}}
\newcommand{\ben}{\begin{eqnarray*}}
\newcommand{\en}{\end{eqnarray}}
\newcommand{\enn}{\end{eqnarray*}}
\newtheorem{theorem}{Theorem}[section]
\newtheorem{lemma}{Lemma}[section]
\newtheorem{prp}[theorem]{Proposition}
\newtheorem{thm}[theorem]{Theorem}
\newtheorem{cor}[theorem]{Corollary}
\newtheorem{dfn}{Definition}[section]
\newtheorem{remark}{Remark}
\begin{document}
\renewcommand{\theequation}{\arabic{section}.\arabic{equation}}
\begin{titlepage}
\title{\bf  Central limit theorem and moderate deviation principle for stochastic scalar conservation laws
}
\author{Zhengyan Wu$^{1,2}$\ \ Rangrang Zhang$^{3,}\footnote{Corresponding author.}$\\
{\small $^1$ Academy of Mathematics and System Science, Chinese Academy of Science, Beijing 100190, China.}\\
{\small $^2$ Department of Mathematics, University of Bielefeld, D-33615 Bielefeld, Germany.}\\
{\small $^3$ School of Mathematics and Statistics,
Beijing Institute of Technology, Beijing, 100081, China.}\\
({\sf zwu@math.uni-bielefeld.de}, {\sf rrzhang@amss.ac.cn}  )}
\date{}
\end{titlepage}
\maketitle

\noindent\textbf{Abstract}:
We establish a central limit theorem and prove a moderate deviation principle for stochastic scalar conservation laws. Due to the lack of viscous term, this is done in the framework of kinetic solution. The weak convergence method and doubling variables method play a key role.

\noindent \textbf{AMS Subject Classification}:\ \ 60F10, 60H15, 60G40

\noindent\textbf{Keywords}: stochastic scalar conservation laws; {\color{black}weak convergence method; doubling variables method;} central limit theorem; moderate deviation principle; kinetic solution.

\section{Introduction}
This paper concerns the asymptotic behaviour of stochastic scalar conservation laws with small multiplicative noise. More precisely, fix any
$T>0$ and let $(\Omega,\mathcal{F},P,\{\mathcal{F}_t\}_{t\in
[0,T]},(\{\beta_k(t)\}_{t\in[0,T]})_{k\in\mathbb{N}})$ be a stochastic basis. Without loss of generality, here the filtration $\{\mathcal{F}_t\}_{t\in [0,T]}$ is assumed to be complete and $\{\beta_k(t)\}_{t\in[0,T]},k\in\mathbb{N}$, are independent (one-dimensional)  $\{\mathcal{F}_t\}_{t\in [0,T]}-$Wiener processes. We use $E$ to denote the expectation with respect to $P$.
Let $\mathbb{T}^d\subset\mathbb{R}^d$ denote the one-dimensional torus (suppose the periodic length is $1$).
We are concerned with the following  conservation laws with stochastic forcing
\begin{eqnarray}\label{sbe}
\left\{
  \begin{array}{ll}
  du+{\rm{ {\rm{div}} }}(A(u))dt=\Phi(u)dW(t)\ \ {\rm{in}}\ \ \mathbb{T}^d\times[0,T],\\
u(\cdot,0)=C \quad {\rm{on}}\ \ \mathbb{T}^d,
  \end{array}
\right.
\end{eqnarray}
where $u:(\omega,x,t)\in\Omega\times\mathbb{T}^d\times[0,T]\mapsto u(\omega,x,t):=u(x,t)\in\mathbb{R}$,
that is, the equation is periodic in the space variable $x\in \mathbb{T}^d$, the flux function $A:\mathbb{R}\to\mathbb{R}^d$ belongs to $C^2(\mathbb{R};\mathbb{R}^d)$, the coefficient
 $\Phi:\mathbb{R}\to\mathbb{R}$ is measurable and fulfills certain conditions specified later, $W$ is a cylindrical Wiener process defined on a given (separable) Hilbert space $U$ with
the form $W(t)=\sum_{k\geq 1}\beta_k(t) e_k,t\in[0,T]$, where $(e_k)_{k\geq 1}$ is a complete orthonormal {\color{black}basis} in the Hilbert space $U$, the initial value $u_0$ is equal to a constant $C$, for simplicity, we take $C=1$. The reason for requiring constant initial values is postponed to explain in the last paragraph of this section.

 \vskip 0.3cm
The (deterministic) conservation laws (in both scalar and vectorial) are fundamental to our understanding of the space-time evolution laws of interesting physical quantities, in that they describe (dynamical) processes that can or cannot occur in nature. When $d=1$ and the flux function $A(\xi)=\frac{\xi^2}{2}$, (\ref{sbe}) is blackuced to  stochastic Burgers equation.
The deterministic Burgers equation was introduced in \cite{B} to describe the turbulence phenomena in fluids, which can be solved by Cole-Hopf transform.
The randomly forced Burgers equation is a prototype for range of problems in non-equilibrium statistical physics where strong effects are included, see \cite{BFKL,BG,BFGLV,CY,F,GM,KS,Pol}, etc.
Regarding to the conservation laws (\ref{sbe}), both the deterministic ($\Phi=0$) and stochastic cases have been studied extensively by many people.
For more background on the conservation laws, we refer the readers to the monograph \cite{Dafermos}, the work of Ammar,
Wittbold and Carrillo \cite{K-P-J} and references therein. As we know, the Cauchy problem
for the deterministic conservation laws does not admit any (global) smooth solutions, but there exist infinitely many weak solutions to the deterministic Cauchy problem. To solve the problem of non-uniqueness, an additional entropy condition was added to identify a physical weak solution. Under this condition, Kru\v{z}kov \cite{Kr-1,Kr-2} introduced the notion of entropy solutions for the deterministic first-order scalar conservation laws.
The kinetic formulation of weak entropy solution of the Cauchy problem for a general multi-dimensional scalar conservation laws (also called the kinetic system), was derived by Lions, Perthame and Tadmor in \cite{L-P-T}.
In recent years, the stochastic conservation laws has been developed rapidly. We refer the reader to  \cite{K,V-W,F-N,DWZZ}, etc. We
  particularly mention the paper \cite{D-V-1} in which the authors  proved the existence and uniqueness of kinetic solution to the Cauchy problem for (\ref{sbe}) in any dimension.
 In addition, there are some works on the long time behavior/ergodicity of stochastic scalar conservation laws. {\color{black}In the one-dimensional case}, {\color{black} E, Khanin, Mazel and Sinai} \cite{E00} proved the existence and uniqueness of invariant measures for the periodic stochastic  Burgers equation with additive forcing. Later, Debussche and Vovelle \cite{DV-2} studied scalar conservation laws with additive stochastic forcing on torus of any dimension and proved the existence and uniqueness of an invariant measure for sub-cubic fluxes and sub-quadratic fluxes, respectively.
Recently, for the small noise asymptotic behaviour, Dong et al. \cite{DWZZ} established Freidlin-Wentzell's type large deviation principles (LDP) for the kinetic solution to the scalar stochastic conservation laws.

\vskip 0.3cm

The purpose of this paper is to investigate the central limit theorem (CLT) and moderate deviation principle (MDP) for (\ref{sbe}) driven by small multiplicative noise. Concretely, we consider {\color{black}for $\varepsilon>0$ the perturbed stochastic conservation laws}
\begin{eqnarray}\label{sbe-1}
\left\{
  \begin{array}{ll}
  du^{\varepsilon}+ {\rm{div}} (A(u^{\varepsilon}))dt=\sqrt{\varepsilon}\Phi(u^{\varepsilon})dW(t)\ \ {\rm{in}}\ \ \mathbb{T}^d\times[0,T],\\
u^{\varepsilon}(\cdot,0)=1 \quad {\rm{on}}\ \ \mathbb{T}^d.
  \end{array}
\right.
\end{eqnarray}
We aim to explore the deviations of $u^{\varepsilon}$ from the deterministic solution, as
 $\varepsilon\rightarrow0$.
The deterministic solution satisfies
 \begin{eqnarray}\label{r-12}
\left\{
  \begin{array}{ll}
  d\bar{u}+ {\rm{div}} (A(\bar{u}))dt=0\ \ {\rm{in}}\ \ [0,T]\times\mathbb{T}^d,\\
\bar{u}(\cdot,0)=1 \quad {\rm{on}}\ \ \mathbb{T}^d.
  \end{array}
\right.
\end{eqnarray}
We are interested in the asymptotic behavior of the trajectories,
\begin{equation*}
X^{\varepsilon}(t)=\frac{1}{\sqrt{\varepsilon}\lambda(\varepsilon)}(u^{\varepsilon}(t)-\bar{u}(t)),\ t\in [0,T],
\end{equation*}
where  $\lambda(\varepsilon)$ is some deviation scale influencing the asymptotic behavior of $X^{\varepsilon}$.
Concretely, three cases are involved:
\begin{description}
  \item[(1)] The case $\lambda(\varepsilon)=\frac{1}{\sqrt{\varepsilon}}$ provides LDP, which has been proved by \cite{DWZZ}.
  \item[(2)] The case $\lambda(\varepsilon)=1$  provides the central limit theorem (CLT). We will show that $X^\varepsilon$ converges to a solution of a stochastic equation, as $\varepsilon$ decrease to 0 in Section \ref{section3}.
  \item[(3)] To fill in the gap between the CLT scale ($\lambda(\varepsilon)=1$) and the large deviations scale ($\lambda(\varepsilon)=\frac{1}{\sqrt{\varepsilon}}$), we will study the so-called moderate deviation principle (MDP) in Section \ref{section4}. Here, the deviations scale satisfies
      \begin{eqnarray}\label{e-43}
      \lambda(\varepsilon)\rightarrow +\infty,\ \sqrt{\varepsilon}\lambda(\varepsilon)\rightarrow 0\quad {\rm{as}} \ \varepsilon\rightarrow 0.
      \end{eqnarray}
\end{description}

Similar to LDP, MDP arises in the theory of statistical inference naturally, which can provide us with the rate of convergence and a useful method for constructing asymptotic confidence intervals ( see, e.g. \cite{E-1,I-K,K} and references therein). {\color{black} An important tool for studying moderate deviations is the weak convergence approach, which is developed by Dupuis and Ellis in \cite{DE}. For more details on this method, we refer the readers to Bou\'{e}, Dupuis \cite{MP}, Budhiraja, Dupuis \cite{BD} and Budhiraja, Dupuis, Maroulas \cite{BDM08}.
Recently, a sufficient condition to verify the large deviation criteria of \cite{BDM08} for functionals of Brownian motions is proposed by Matoussi, Sabbagh and Zhang in \cite{MSZ}, which turns out to be more suitable for SPDEs arising from fluid mechanics. Thus, in the present paper, we adopt this new sufficient condition.}
Up to now, there are plenty of results on the moderate deviations for fluid mechanics and other processes. For example, Wang et al. \cite{WZZ} established the CLT and MDP for 2D Navier-Stokes equations driven by multiplicative Gaussian noise in  $C([0,T];H)\cap L^2([0,T];V)$. Further, Dong et al. \cite{DXZZ} consideblack the MDP for 2D Navier-Stokes equations driven by multiplicative L\'{e}vy noises in $D([0,T];H)\cap L^2([0,T];V)$.
In view of the characterization of the super-Brownian motion (SBM) and the Fleming-Viot process (FVP), Fatheddin and Xiong \cite{FX} obtained  MDP for those processes.
Recently, the CLT and MDP for stochastic Burgers equation with viscosity term have been studied by several authors, see \cite{BB,XZ}.

\vskip 0.3cm

As stated above, the aim of this paper is to show two kinds of asymptotic behaviors of $X^\varepsilon$: the CLT and MDP in $L^1([0,T]; L^1(\mathbb{T}^d))$, which provide the exponential decay of small probabilities associated  with the corresponding stochastic dynamical systems with small noise. To our knowledge, this is the first work towards CLT and MDP for kinetic solution of stochastic scalar conservation laws. Due to the lack of viscous term, the kinetic solutions of (\ref{sbe}) are living in a rather irregular space, it is indeed a challenge to establish CLT and MDP for  (\ref{sbe}) with general noise force. To achieve the results, {\color{black}we} divide the proof into two parts.
For the CLT, we need to show that $\frac{1}{\sqrt{\varepsilon}}(u^{\varepsilon}-\bar{u})$ converges to a solution $\bar{u}^1$ of a stochastic equation in $L^1([0,T]; L^1(\mathbb{T}^d))$, as $\varepsilon$ decreases to 0. It's important to point out that although the kinetic formulations are available for both $\frac{1}{\sqrt{\varepsilon}}(u^{\varepsilon}-\bar{u})$ and $\bar{u}^1$ when the initial value of (\ref{sbe-1}) is constant, this convergence cannot be obtained directly by applying doubling variables method.
The essential observation is that the doubling variables method can succeed for two equations only if they are symmetry in some sense.
To overcome this difficulty, we introduce some auxiliary approximations, which are symmetric with the original equations. During the proof process, the vanishing viscosity method and the doubling variables method play an essential role.
 Concerning the MDP, it can be changed to prove that $\frac{1}{\sqrt{\varepsilon}\lambda(\varepsilon)}(u^{\varepsilon}-\bar{u})$ satisfies a large deviation principle in $L^1([0,T]; L^1(\mathbb{T}^d))$ with $\lambda(\varepsilon)$ satisfying (\ref{e-43}). During the proof process of MDP, we encounter the same difficulties as CLT. Therefore, we also need to introduce some suitable parabolic approximation equations.

\smallskip

{\color{black}
To end the introduction, we take $\bar{u}^1$ as an example to explain the difficulty of handling general initial values.
Here, $\bar{u}^1$ is the solution of the limiting equation of $X^\varepsilon$ as $\varepsilon\rightarrow 0$ satisfying
\begin{eqnarray}\label{r-2}
\left\{
  \begin{array}{ll}
  d\bar{u}^1+{\rm{div}} (a(\bar{u})\bar{u}^1)dt=\Phi(\bar{u})dW(t) \ \ {\rm{in}}\ \mathbb{T}^d\times [0,T],\\
\bar{u}^1(x,0)=0 \quad {\rm{on}}\ \mathbb{T}^d,
  \end{array}
\right.
\end{eqnarray}
where $a$ is the derivative of $A$ and $\bar{u}$ is the solution of deterministic conservation laws with initial value $u_0$. Since we want to apply the doubling variables method to $\bar{u}^1$, the kinetic formulation of $\bar{u}^1$ is necessary. Formally (in fact, to derive the kinetic formulation for $\bar{u}^1$, we need to firstly derive the kinetic formulation for viscous approximation $\bar{u}^{1,\eta}$ defined by (\ref{rrr-2}), then let $\eta\rightarrow 0$), by It\^{o} formula and using similar method as in the proof of Proposition 23 in \cite{D-V-1},
it follows that there exists a kinetic measure $m$ such that the kinetic function $f:=I_{\bar{u}^1>\xi}$ satisfies
\begin{align}\notag
&\int^T_0\int_{\mathbb{T}^d}\int_{\mathbb{R}} f(t)\partial_t \varphi(t) dx d\xi dt+\int_{\mathbb{T}^d}\int_{\mathbb{R}} f_0\varphi(0)dxd\xi+\int^T_0\int_{\mathbb{T}^d}\int_{\mathbb{R}}a(\bar{u})\nabla_x  \varphi(x,t,\xi) f(t)d\xi dxdt\\ \notag
&= -\int^T_0\int_{\mathbb{T}^d}\int_{\mathbb{R}} \Big({\rm{div}}  a(\bar{u})\Big)\varphi(x,t,\xi) f(t)d\xi dxdt
+ \int^T_0\int_{\mathbb{T}^d}\Big({\rm{div}}  a(\bar{u})\Big)\varphi(x,\bar{u}^1)\bar{u}^1 dxdt\\
\notag
& - \int^T_0\int_{\mathbb{T}^d}\varphi(x,t,\bar{u}^1)\Phi(\bar{u})dxdW(t)
 -\frac{1}{2}\int^T_0\int_{\mathbb{T}^d}G^2(\bar{u})\int_{\mathbb{R}}\partial_{\xi}\varphi(x,t,\xi)\delta_0(\bar{u}^1-\xi) d\xi dxdt\\
 \label{r-1}
 &+m(\partial_{\xi}\varphi)
\quad a.s..
\end{align}
Notice that the term $ {\rm{div}} a(\bar{u})$ appears in the second line of (\ref{r-1}), but it seems difficult to provide any bound on it when $\bar{u}$ is the solution to the deterministic conservation laws with general initial data $u_0$ based on the result of Section 3.4 in \cite{D-V-1}. Hence, in the present paper, we focus on the special case that $\bar{u}$ is constant which constrains the initial value $u_0$ to be constant.

}

This paper is organized as follows. The mathematical framework of stochastic conservation laws is in Section 2. We present the proof of the central limit theorem in Section 3. The moderate deviation principle is established in Section 4.

\section{Kinetic solution and hypotheses}\label{section2}
We will follow closely the framework of \cite{D-V-1}.
Let $\|\cdot\|_{L^p}$ denote the norm of usual Lebesgue space $L^p(\mathbb{T}^d)$ for {\color{black}$p\in [1,\infty)$}. {\color{black}In particular, set $H=L^2(\mathbb{T}^d)$ with the corresponding norm $\|\cdot\|_H$ and inner produce $(\cdot,\cdot)$.}
  {\color{black}$C_b$} represents the space of bounded, continuous functions and $C^1_b$ stands for the space of bounded, continuously differentiable functions having bounded first order derivative. Define the function $f(x,t,\xi):=I_{u(x,t)>\xi}$, which is the characteristic function of the subgraph of $u$. We write $f:=I_{u>\xi}$ for short.
 Moreover, denote by the brackets $\langle\cdot,\cdot\rangle$ the duality between $C^{\infty}_c(\mathbb{T}^d\times \mathbb{R})$ and the space of distributions over $\mathbb{T}^d\times \mathbb{R}$. In what follows, with a slight abuse of the notation $\langle\cdot,\cdot\rangle$, we denote the following integral by
\begin{equation*}
\langle F, G \rangle:=\int_{\mathbb{T}^d}\int_{\mathbb{R}}F(x,\xi)G(x,\xi)dxd\xi, \quad F\in L^p(\mathbb{T}^d\times \mathbb{R}), G\in L^q(\mathbb{T}^d\times \mathbb{R}),
\end{equation*}
where $1\leq p < +\infty$, $q:=\frac{p}{p-1}$ is the conjugate exponent of $p$. In particular, when $p=1$, we set $q=\infty$ by convention. For a measure $m$ on the Borel measurable space $\mathbb{T}^d\times[0,T]\times \mathbb{R}$, the shorthand $m(\phi)$ is defined by
\begin{equation*}
m(\phi):=\langle m, \phi \rangle([0,T]):=\int_{\mathbb{T}^d\times[0,T]\times \mathbb{R}}\phi(x,t,\xi)dm(x,t,\xi), \quad  \phi\in C_b(\mathbb{T}^d\times[0,T]\times \mathbb{R}).
\end{equation*}
In the sequel,  the notation $a\lesssim b$ for $a,b\in \mathbb{R}$  means that $a\leq \mathcal{D}b$ for some constant $\mathcal{D}> 0$ independent of any parameters.
\subsection{Hypotheses}
For the flux function $A$ and the coefficient $\Phi$ of (\ref{sbe}), we assume that
\begin{description}
  \item[\textbf{Hypothesis H}] The flux function $A$ belongs to $C^2(\mathbb{R};\mathbb{R}^d)$ and its derivative $a$ has at most polynomial growth. That is, there exist constants $C>0, p\geq1$ such that
     \begin{eqnarray}\label{qeq-22}
     |a(\xi)-a(\zeta)|\leq \Gamma(\xi,\zeta)|\xi-\zeta|, \quad \Gamma(\xi,\zeta)=C(1+|\xi|^{p-1}+|\zeta|^{p-1}).
      \end{eqnarray}

  The map $\Phi(u): H\rightarrow H$ is defined by $\Phi(u) e_k:=g_k(\cdot, u)$, where $(e_k)_{k\geq 1}$ is a complete orthonormal base in the Hilbert space $H$ and each $g_k(\cdot,u)$ is a regular function on $\mathbb{T}^d$. More precisely, we assume that $g_k\in C(\mathbb{T}^d\times \mathbb{R})$ satisfying the following bounds
\begin{eqnarray}\label{equ-28}
G^2(x,u):=\sum_{k\geq 1}|g_k(x,u)|^2&\leq& D_0(1+|u|^2),\\
\label{equ-29}
\sum_{k\geq 1}|g_k(x,u)-g_k(y,v)|^2&\leq& D_1\Big(|x-y|^2+{|u-v|^2}\Big),
\end{eqnarray}
for $x, y\in \mathbb{T}^d, u,v\in \mathbb{R}$.
\end{description}



{\textbf{From now on and in the sequel, we always assume Hypothesis H is in force.}}
\subsection{Kinetic solution}
  Keeping in mind that we are working on the stochastic basis $(\Omega,\mathcal{F},P,\{\mathcal{F}_t\}_{t\in [0,T]},(\beta_k(t))_{k\in\mathbb{N}})$.
\begin{dfn}(Kinetic measure)\label{dfn-3}
 A map $m$ from $\Omega$ to the set of non-negative, finite measures over $\mathbb{T}^d\times [0,T]\times\mathbb{R}$ is said to be a kinetic measure, if
\begin{description}
  \item[1.] $ m $ is measurable, that is, for each $\phi\in C_b(\mathbb{T}^d\times [0,T]\times \mathbb{R})$, ${\color{black}\langle m, \phi \rangle([0,T])}: \Omega\rightarrow \mathbb{R}$ is measurable,
  \item[2.] $m$ vanishes for large $\xi$, i.e.,
\begin{eqnarray}\label{equ-37}
\lim_{R\rightarrow +\infty}E[m(\mathbb{T}^d\times [0,T]\times B^c_R)]=0,
\end{eqnarray}
where $B^c_R:=\{\xi\in \mathbb{R}, |\xi|\geq R\}$
  \item[3.] for every $\phi\in C_b(\mathbb{T}^d\times \mathbb{R})$, the process
\begin{equation*}
(\omega,t)\in\Omega\times[0,T]\mapsto \int_{\mathbb{T}^d\times [0,t]\times \mathbb{R}}\phi(x,\xi)dm(x,s,\xi)\in\mathbb{R}
\end{equation*}
is pblackictable.
\end{description}
\end{dfn}
Let $\mathcal{M}^+_0(\mathbb{T}^d\times [0,T]\times \mathbb{R})$ be the space of all bounded, nonnegative random measures $m$ satisfying (\ref{equ-37}).

\begin{dfn}(Kinetic solution)\label{dfn-1}
 A measurable function {\color{black}$u: \Omega\times\mathbb{T}^d\times [0,T]\rightarrow \mathbb{R}$} is called a kinetic solution to (\ref{sbe}), if
\begin{description}
  \item[1.] $(u(t))_{t\in[0,T]}$ is pblackictable,
  \item[2.] for any $p\geq1$, there exists $C_p\geq0$ such that
\begin{equation*}
E\left(\underset{0\leq t\leq T}{{\rm{ess\sup}}}\ \|u(t)\|^p_{L^p(\mathbb{T}^d)}\right)\leq C_p,
\end{equation*}
\item[3.] there exists a kinetic measure $m$ such that $f:= I_{u>\xi}$ satisfies the following
\begin{eqnarray}\notag
&&\int^T_0\langle f(t), \partial_t \varphi(t)\rangle dt+\langle f_0, \varphi(0)\rangle +\int^T_0\langle f(t), a(\xi)\cdot \nabla \varphi (t)\rangle dt\\ \notag
&=& -\sum_{k\geq 1}\int^T_0\int_{\mathbb{T}^d} g_k(x, u(x,t))\varphi (x,t,u(x,t))dxd\beta_k(t) \\
\label{P-21}
&& -\frac{1}{2}\int^T_0\int_{\mathbb{T}^d}\partial_{\xi}\varphi (x,t,u(x,t))G^2(x,u(x,t))dxdt+ m(\partial_{\xi} \varphi), \ a.s. ,
\end{eqnarray}
for all $\varphi\in C^1_c(\mathbb{T}^d\times [0,T]\times \mathbb{R})$, where $u(t)=u(\cdot,t,\cdot)$, $f_0=I_{1>\xi}$ and $a(\xi):=A'(\xi)$.
\end{description}
\end{dfn}

Let $(X,\lambda)$ be a finite measure space. For some measurable function $u: X\rightarrow \mathbb{R}$, define $f: X\times \mathbb{R}\rightarrow [0,1]$ by $f(z,\xi)=I_{u(z)>\xi}$ a.e.
we use $\bar{f}:=1-f$ to denote its conjugate function. Define $\Lambda_f(z,\xi):=f(z,\xi)-I_{0>\xi}$, which can be viewed as a correction to $f$. Note that $\Lambda_f$ is integrable on $X\times \mathbb{R}$ if $u$ is.
\vskip 0.3cm

It is shown in \cite{D-V-1} that almost surely, for each kinetic solution $u$, the function $f=I_{u(x,t)>\xi}$ admits left and right weak limits at any point $t\in[0,T]$, and the weak form (\ref{P-21}) satisfied by a kinetic solution can be strengthened to be weak only respect to $x$ and $\xi$. More precisely,  the following results are obtained.
\begin{prp}(\cite{D-V-1}, Left and right weak limits)\label{prp-3} Let $u$ be a kinetic solution to (\ref{sbe}). Then $f=I_{u(x,t)>\xi}$  admits, almost surely, left and right limits respectively at every point $t\in [0,T]$. More precisely, for any  $t\in [0,T]$, there exist functions $f^{t\pm}$ on $\Omega\times \mathbb{T}^d\times \mathbb{R}$ such that $P-$a.s.
\begin{eqnarray*}
\langle f(t-r),\varphi\rangle\rightarrow \langle f^{t-},\varphi\rangle
\end{eqnarray*}
and
\begin{eqnarray*}
\langle f(t+r),\varphi\rangle\rightarrow \langle f^{t+},\varphi\rangle
\end{eqnarray*}
as $r\rightarrow 0$ for all $\varphi\in C^1_c(\mathbb{T}^d\times \mathbb{R})$. Moreover, almost surely,
\begin{equation*}
\langle f^{t+}-f^{t-}, \varphi\rangle=-\int_{\mathbb{T}^d\times[0,T]\times \mathbb{R}}\partial_{\xi}\varphi(x,\xi)I_{\{t\}}(s)dm(x,s,\xi).
\end{equation*}
In particular, almost surely, the set of $t\in [0,T]$ fulfilling $f^{t+}\neq f^{t-}$ is countable.
\end{prp}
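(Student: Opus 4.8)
The plan is to reduce the statement to a one-dimensional question about the time regularity of the scalar process $t \mapsto \langle f(t), \phi \rangle$ for a fixed spatial test function $\phi \in C^1_c(\mathbb{T}^d\times\mathbb{R})$, and then to assemble the resulting one-sided limits into genuine functions $f^{t\pm}$. First I would specialize the kinetic formulation (\ref{P-21}) to test functions of the separated form $\varphi(x,t,\xi)=\psi(t)\phi(x,\xi)$ with $\psi\in C^1([0,T])$, which captures the boundary contribution at $t=0$. Using $\partial_t\varphi=\psi'\phi$, $\nabla\varphi=\psi\nabla\phi$ and $\partial_\xi\varphi=\psi\,\partial_\xi\phi$, the identity (\ref{P-21}) collapses to a scalar relation for $g(t):=\langle f(t),\phi\rangle$, asserting that the distributional time derivative of $g$ equals $\langle f(t),a(\xi)\cdot\nabla\phi\rangle$ plus a stochastic differential in the $d\beta_k$, plus a drift coming from $G^2$, minus the time-marginal of the kinetic measure tested against $\partial_\xi\phi$.

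Second, integrating by parts against $\psi$ I would read off that $g$ admits a representative $\tilde g$ which, $P$-a.s., decomposes as the sum of three pieces: an absolutely continuous part $\langle f_0,\phi\rangle+\int_0^t\langle f(s),a(\xi)\cdot\nabla\phi\rangle\,ds+\tfrac{1}{2}\int_0^t(\int_{\mathbb{T}^d}\partial_\xi\phi(x,u)G^2(x,u)\,dx)\,ds$, a continuous martingale part $\sum_k\int_0^t(\int_{\mathbb{T}^d}g_k\phi(x,u)\,dx)\,d\beta_k(s)$, and a bounded-variation part $t\mapsto -\int_{\mathbb{T}^d\times[0,t]\times\mathbb{R}}\partial_\xi\phi\,dm$. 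The first two integrands are bounded because $\partial_\xi\phi$ and $\nabla\phi$ have compact $\xi$-support, so $a(\xi)$ and (via the growth bound (\ref{equ-28})) $G^2(x,u)$ are controlled on that support, using the integrability of Definition \ref{dfn-1}; the third is of bounded variation since $m$ is finite and $\partial_\xi\phi$ is bounded, and it is predictable by Definition \ref{dfn-3}. A function that is the sum of a continuous function and a bounded-variation function has left and right limits at every $t\in[0,T]$, giving, for each fixed $\phi$, the existence of $\lim_{r\to0}\langle f(t\mp r),\phi\rangle=:\langle f^{t\mp},\phi\rangle$.

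Third, to promote these per-test-function limits into actual functions I would fix a countable family $\{\phi_j\}_{j\ge1}\subset C^1_c(\mathbb{T}^d\times\mathbb{R})$ dense in the relevant topology, run the previous step simultaneously for all $j$ on one common $P$-full event, and then use the uniform bound $0\le f=I_{u>\xi}\le1$ to extend the limit functionals from $\{\phi_j\}$ to all of $C^1_c$ by density; the same $L^\infty$ bound identifies each limit functional with a measurable $[0,1]$-valued function $f^{t\pm}$, realized as a weak-$\ast$ limit point in $L^\infty(\mathbb{T}^d\times\mathbb{R})$. The jump formula then follows because the absolutely continuous and martingale parts are continuous in $t$, so the only contribution to $\langle f^{t+}-f^{t-},\phi\rangle$ is the jump of the bounded-variation part, namely $-\int_{\mathbb{T}^d\times\{t\}\times\mathbb{R}}\partial_\xi\phi\,dm=-\int_{\mathbb{T}^d\times[0,T]\times\mathbb{R}}\partial_\xi\phi(x,\xi)I_{\{t\}}(s)\,dm(x,s,\xi)$. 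Finally, countability of the jump set is immediate: $f^{t+}\neq f^{t-}$ forces $m(\mathbb{T}^d\times\{t\}\times\mathbb{R})>0$, and a finite measure charges only countably many time-slices with positive mass.

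I expect the main obstacle to be the third step, namely the uniform-in-$t$ bookkeeping of null sets: the one-sided limits must be arranged to exist on a single $P$-full event and simultaneously for every test function, and the weak-$\ast$ limit functionals must be rigorously identified with honest $[0,1]$-valued functions rather than mere distributions. The continuity of the martingale term and the measurability/predictability structure from Definition \ref{dfn-3} also demand care, but the genuinely delicate point is the passage from the countable dense family to all test functions while preserving measurability of $f^{t\pm}$.
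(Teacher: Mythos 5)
Your proposal is correct and follows essentially the same route as the proof of this result in Debussche--Vovelle \cite{D-V-1} (the paper itself only cites it): test (\ref{P-21}) against separated functions $\psi(t)\phi(x,\xi)$, split $\langle f(t),\phi\rangle$ into continuous (absolutely continuous drift plus martingale) and bounded-variation (kinetic measure) parts, obtain one-sided limits per test function, then pass to a countable dense family on one full event and use $0\le f\le 1$ with weak-$\ast$ compactness in $L^\infty$ to build $f^{t\pm}$, with the jump formula and countability read off from the atoms of the finite measure $m$. The delicate points you flag (common null set, identification of the weak-$\ast$ limits as $[0,1]$-valued functions) are exactly the ones handled there, and your treatment of them is adequate.
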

For the function $f=I_{u(x,t)>\xi}$ in Proposition \ref{prp-3}, define $f^{\pm}$ by $f^{\pm}(t)=f^{t \pm}$, $t\in [0,T]$. Since we are dealing with the filtration associated to Brownian motion, both $f^{+}$ and $f^{-}$ are  clearly pblackictable as well. Also $f=f^+=f^-$ almost everywhere in time and we can take any of them in an integral with respect to the Lebesgue measure or in a stochastic integral. However, if the integral is with respect to a measure, typically a kinetic measure in this article, the integral is not well-defined for $f$ and may differ if one chooses $f^+ $ or $f^-$.

With the aid of Proposition \ref{prp-3}, the following result was proved in  \cite{D-V-1}.
 \begin{lemma}\label{lem-1}
 The weak form (\ref{P-21}) satisfied by $f= I_{u>\xi}$ can be strengthened to be weak only respect to $x$ and $\xi$. Concretely, for all $t\in [0,T)$ and $\varphi\in C^1_c(\mathbb{T}^d\times \mathbb{R})$, $f= I_{u>\xi}$ satisfies
 \begin{eqnarray}\notag
\langle f^+(t),\varphi\rangle&=&\langle f_{0}, \varphi\rangle+\int^t_0\langle f(s), a(\xi)\cdot \nabla \varphi\rangle ds\\
\notag
&&+\sum_{k\geq 1}\int^t_0\int_{\mathbb{T}^d}\int_{\mathbb{R}}g_k(x,\xi)\varphi(x,\xi)d\nu_{x,s}(\xi)dxd\beta_k(s)\\
\label{qq-17}
&& +\frac{1}{2}\int^t_0\int_{\mathbb{T}^d}\int_{\mathbb{R}}\partial_{\xi}\varphi(x,\xi)G^2(x,\xi)d\nu_{x,s}(\xi)dxds- \langle m,\partial_{\xi} \varphi\rangle([0,t]), \ a.s.,
\end{eqnarray}
where $\nu_{x,s}(\xi)=-\partial_{\xi}f(x,s,\xi)=\delta_{u(x,s)=\xi}$ and we set $f^+(T)=f(T)$.
 \end{lemma}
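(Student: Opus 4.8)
The goal is to strengthen the space-time weak formulation (\ref{P-21}) — which tests against $\varphi(x,t,\xi)$ with a time derivative falling on $\varphi$ — into the formulation (\ref{qq-17}) that holds for each fixed $t$ and is weak only in $x$ and $\xi$. The payoff is an equation evaluated at the right weak limit $f^+(t)$, with the kinetic measure now integrated over the fixed time interval $[0,t]$.

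Let me think about how to prove this.

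The key structural fact I have available is Proposition \ref{prp-3}: for each $t$, $f(t\pm r) \to f^{t\pm}$ weakly as $r\to 0$, and $f = f^+ = f^-$ for a.e. $t$. The strategy is the standard one: take the space-time weak formulation, plug in a test function of the separated form $\varphi(x,t,\xi) = \rho(t)\psi(x,\xi)$ where $\rho$ is a smooth approximation of the indicator $\mathbf{1}_{[0,t]}$, and let $\rho$ converge to that indicator. The time derivative $\partial_t\varphi = \rho'(t)\psi$ then concentrates as an approximate point mass near $t$, and the left-hand side term $\int_0^T \langle f(s), \rho'(s)\psi\rangle\,ds$ should converge to $-\langle f^+(t), \psi\rangle + \langle f_0, \psi\rangle$ (the sign and the appearance of $f^+$ coming from the one-sided limit picked out by $\rho'$). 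All the other terms (flux, stochastic, $G^2$, and kinetic-measure terms), being integrals in $s$ against $\rho(s) \to \mathbf{1}_{[0,t]}(s)$, converge to their counterparts integrated over $[0,t]$. I also need to rewrite the pointwise-in-$u$ terms such as $g_k(x,u(x,s))\psi(x,u(x,s))$ as integrals against the Young measure $\nu_{x,s} = \delta_{u(x,s)=\xi}$, which is exactly how those terms get the form displayed in (\ref{qq-17}).

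Here is the plan in order. First I would fix $\psi \in C^1_c(\mathbb{T}^d\times\mathbb{R})$ and, for small $r>0$, choose $\rho = \rho_r$ continuous and piecewise linear with $\rho_r \equiv 1$ on $[0,t]$, $\rho_r$ decreasing linearly to $0$ on $[t,t+r]$, and $\rho_r \equiv 0$ afterward, so that $\rho_r' = -\tfrac{1}{r}\mathbf{1}_{[t,t+r]}$. (A genuinely $C^1$ mollification works identically; piecewise linear keeps the bookkeeping cleaner.) Substituting $\varphi(x,s,\xi)=\rho_r(s)\psi(x,\xi)$ into (\ref{P-21}) and noting $\varphi(0)=\psi$, the first term becomes $-\tfrac{1}{r}\int_t^{t+r}\langle f(s),\psi\rangle\,ds$. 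Second, I would let $r\downarrow 0$ term by term: the flux, noise, $G^2$, and measure terms carry $\rho_r(s)\to\mathbf{1}_{[0,t]}(s)$ and converge by dominated convergence (using the moment bound in Definition \ref{dfn-1} and the growth bounds (\ref{equ-28})–(\ref{equ-29}) on $g_k, G^2$ to dominate the stochastic integrand, with an Itô-isometry argument for the martingale term). Third — the crux — I would identify $\lim_{r\to 0}\tfrac{1}{r}\int_t^{t+r}\langle f(s),\psi\rangle\,ds = \langle f^{t+},\psi\rangle = \langle f^+(t),\psi\rangle$, which is precisely the Cesàro/averaged right-limit furnished by Proposition \ref{prp-3}. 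Rearranging gives (\ref{qq-17}), with the measure term appearing as $\langle m,\partial_\xi\psi\rangle([0,t])$ because $\rho_r \to \mathbf{1}_{[0,t]}$ restricts the $s$-integration defining $m(\partial_\xi\varphi)$ to $[0,t]$.

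The main obstacle is the last step — passing the time-average of $\langle f(s),\psi\rangle$ to the one-sided weak limit and getting $f^+$ rather than $f^-$ or an average of the two. Proposition \ref{prp-3} gives convergence of $\langle f(t+r),\psi\rangle$ along the one-sided net, but the Riemann average $\tfrac1r\int_t^{t+r}\langle f(s),\psi\rangle\,ds$ requires that $s\mapsto\langle f(s),\psi\rangle$ be, say, right-regulated at $t$ so its average over $[t,t+r]$ converges to the right value. This is exactly what the existence of $f^{t+}$ as a right weak limit guarantees, so the average of a function with a right limit converges to that limit; one must be careful that the null set of bad times and the countable jump set do not interfere, but since the claim is for all $t\in[0,T)$ and $f^+$ is the canonical right-continuous representative, this is handled by Proposition \ref{prp-3}. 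A secondary subtlety is the stochastic term: because $f = f^+ = f^-$ for a.e. $s$ and the Lebesgue/Itô integrals are insensitive to the choice, one may freely use $f(s)$ (equivalently the Young-measure form $\int_{\mathbb{R}} g_k\psi\,d\nu_{x,s}$) inside the martingale integral, as the excerpt's remark after Proposition \ref{prp-3} explicitly permits. Everything else is routine dominated convergence.
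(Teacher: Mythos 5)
Your proposal is correct and follows essentially the same route as the paper's source for this lemma: the paper does not reprove it but cites \cite{D-V-1}, where the argument is exactly yours --- test (\ref{P-21}) with $\varphi(x,s,\xi)=\rho_r(s)\psi(x,\xi)$ for $\rho_r$ a smooth/piecewise-linear approximation of $\mathbf{1}_{[0,t]}$ decaying on $[t,t+r]$, pass to the limit $r\downarrow 0$ term by term, and identify $\tfrac{1}{r}\int_t^{t+r}\langle f(s),\psi\rangle\,ds\to\langle f^{+}(t),\psi\rangle$ via Proposition \ref{prp-3}. Your choice of $\rho_r$ equal to $1$ at $s=t$ and decaying to the right is also the correct one, as it produces the pairing of $f^{+}(t)$ with $\langle m,\partial_{\xi}\psi\rangle([0,t])$ on the closed interval (the left-sided variant yields the $f^{-}$ formulation (\ref{e-80}) with $[0,t)$, exactly as in the paper's remark).
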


\begin{remark}  By making  modification of the proof of Lemma \ref{lem-1}, we have for all $t\in (0,T]$ and $\varphi\in C^1_c(\mathbb{T}^d\times \mathbb{R})$, $f= I_{u>\xi}$ satisfies
   \begin{eqnarray}\notag
\langle f^-(t),\varphi\rangle&=&\langle f_{0}, \varphi\rangle+\int^t_0\langle f(s), a(\xi)\cdot \nabla \varphi\rangle ds\\
\notag
&&+\sum_{k\geq 1}\int^t_0\int_{\mathbb{T}^d}\int_{\mathbb{R}}g_k(x,\xi)\varphi(x,\xi)d\nu_{x,s}(\xi)dxd\beta_k(s)\\
\label{e-80}
&& +\frac{1}{2}\int^t_0\int_{\mathbb{T}^d}\int_{\mathbb{R}}\partial_{\xi}\varphi(x,\xi)G^2(x,\xi)d\nu_{x,s}(\xi)dxds- \langle m,\partial_{\xi} \varphi\rangle([0,t)), \ a.s.,
\end{eqnarray}
and we set $ f^-(0)=f_0$.
\end{remark}

\subsection{Global well-posedness of (\ref{sbe})}
According to \cite{D-V-1}, we have the following results of (\ref{sbe}).
\begin{thm}\label{thm-4}
(Existence, Uniqueness) Assume Hypothesis H holds. Then there is a unique kinetic solution $u$ to equation (\ref{sbe}), and there exist $u^+$ and $u^{-}$, representatives of $u$
 such that for all $t\in[0,T]$, $f^{\pm}(x,t,\xi)=I_{u^{\pm}(x,t)>\xi}\ a.s.$ for $a.e.\  (x,t,\xi)$.
\end{thm}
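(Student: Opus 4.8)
The plan is to follow the program of Debussche and Vovelle \cite{D-V-1}, splitting the assertion into a \emph{reduction/uniqueness} statement proved by the doubling of variables and an \emph{existence} statement obtained by vanishing viscosity. To make compactness arguments work in the irregular kinetic setting, I would first enlarge the solution class to \emph{generalized kinetic solutions}, in which the kinetic function $f=I_{u>\xi}$ is replaced by the tail distribution $f(x,t,\xi)=\nu_{x,t}((\xi,+\infty))$ of a Young measure $\nu_{x,t}$ on $\mathbb{R}$, and (\ref{P-21}) is required with $\nu_{x,t}$ in place of $\delta_{u(x,t)=\xi}$. The structural fact to prove is that every generalized kinetic solution is in fact genuine (i.e.\ $\nu_{x,t}=\delta_{u(x,t)=\xi}$) and that two solutions satisfy an $L^1$-contraction; uniqueness of kinetic solutions is then immediate.

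For the contraction I would take two (generalized) kinetic solutions with kinetic functions $f_1,f_2$ and measures $m_1,m_2$, and study the evolution of $\langle f_1^\pm(t),\bar f_2^\pm(t)\rangle$. Writing the strengthened formulation (\ref{qq-17}) for $f_1$ at $(x,\xi)$ and for $\bar f_2$ at $(y,\zeta)$, I apply It\^o's formula to the product and integrate against a test function of the form $\rho_\delta(x-y)\psi_\gamma(\xi-\zeta)$. Taking expectations kills the martingale parts; the It\^o correction produced by the two stochastic integrals combines with the two $\tfrac12 G^2\partial_\xi$ terms and, because both solutions are driven by the same noise $\Phi$, reduces to a remainder of size $\sum_k|g_k(x,\xi)-g_k(y,\zeta)|^2$ controlled by (\ref{equ-29}), which vanishes as $\delta,\gamma\to0$. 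The transport terms $a(\xi)\cdot\na$ and $a(\zeta)\cdot\na$ are in divergence form on $\mathbb{T}^d$, so on functions of $x-y$ they reduce to $(a(\xi)-a(\zeta))\cdot\na_x\rho_\delta$, which vanishes in the diagonal limit by the concentration of $\psi_\gamma$ near $\xi=\zeta$ together with (\ref{qeq-22}). Crucially, the two kinetic-measure contributions enter with a favourable sign and may be discarded. Sending $\gamma\to0$ and $\delta\to0$ and using the identity $\int_{\mathbb{R}}(f_1\bar f_2+\bar f_1 f_2)\,d\xi=|u_1-u_2|$ yields $E\|u_1(t)-u_2(t)\|_{L^1}\le E\|u_{1,0}-u_{2,0}\|_{L^1}$, giving uniqueness and, as a by-product, that the limit Young measure is a Dirac mass.

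For existence I would regularize by adding artificial viscosity, solving $du^\eta+\mathrm{div}(A(u^\eta))\,dt=\eta\Delta u^\eta\,dt+\Phi(u^\eta)\,dW$; this nondegenerate parabolic SPDE is well posed by standard methods, and It\^o's formula on $\|u^\eta(t)\|_{L^p}^p$ together with (\ref{equ-28}) and Gronwall gives the uniform bounds demanded in item~2 of Definition \ref{dfn-1}. These bounds provide weak-$\ast$ compactness of the kinetic functions $f^\eta$ and tightness of the associated Young measures; passing to the limit, the parabolic dissipation $\eta|\na u^\eta|^2$ converges to a nonnegative defect measure which, added to the limit of the entropy-production terms, furnishes the kinetic measure $m$. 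The limit object is a generalized kinetic solution, and the reduction of the previous step upgrades it to a genuine kinetic solution of (\ref{sbe}). Finally, the representatives $u^{\pm}$ with $f^{\pm}=I_{u^\pm>\xi}$ are produced by Proposition \ref{prp-3}: once the reduction is known, the left/right weak limits $f^{t\pm}$ remain $\{0,1\}$-valued monotone (kinetic) functions of $\xi$, which defines $u^\pm(x,t)$ pointwise in $t$.

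The main obstacle is the doubling-of-variables computation itself: one must control the interplay of the stochastic It\^o corrections with the velocity regularization $\psi_\gamma$, verifying that the cross term coming from the common noise and the two $G^2$ terms cancel up to an error vanishing as $\gamma,\delta\to0$, while simultaneously handling the nonconservative diagonal limit of the flux terms. A secondary difficulty is identifying the limiting defect measure in the viscous passage as a bona fide kinetic measure in the sense of Definition \ref{dfn-3} (measurability, predictability, and decay for large $\xi$), which requires the $L^p$ estimates to be genuinely $\eta$-independent.
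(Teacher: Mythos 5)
Your proposal is correct and follows essentially the same route as the paper, which does not reprove this statement but quotes it from Debussche and Vovelle \cite{D-V-1}: their argument is precisely your program of generalized kinetic solutions, an $L^1$-contraction by doubling of variables (with the It\^{o} corrections absorbed into the $\sum_k|g_k(x,\xi)-g_k(y,\zeta)|^2$ remainder via (\ref{equ-29}), the flux error controlled by (\ref{qeq-22}) with a suitable coupling of the two mollification scales, and the kinetic measures entering with a favourable sign), followed by reduction of generalized solutions to genuine ones and existence via vanishing viscosity, where the limit of $\eta|\nabla u^\eta|^2$ plus the entropy production furnishes the kinetic measure. The representatives $u^{\pm}$ are then obtained from the left/right weak limits exactly as you describe, in the manner of Proposition \ref{prp-3}.
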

\begin{cor}(Continuity in time) Assume Hypothesis H is in force, then for every $p\in [1,+\infty)$, the kinetic solution $u$ to equation (\ref{sbe}) has a representative in $L^p(\Omega; L^{\infty}(0,T;L^p(\mathbb{T}^d)))$ with almost sure continuous trajectories in $L^p(\mathbb{T}^d)$.

\end{cor}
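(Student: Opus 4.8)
The plan is to deduce both the integrability bound and the time-continuity from a single renormalised It\^o identity for the Lebesgue integral $\int_{\mathbb{T}^d}S(u(t))\,dx$, where $S\in C^2(\mathbb{R})$ is a convex function with $S(0)=0$ approximating $s\mapsto|s|^p$. First I would test the strengthened kinetic formulation of Lemma \ref{lem-1} against the spatially homogeneous test function $\varphi(x,\xi)=\theta_R(\xi)$, where $\theta_R'$ is a compactly supported truncation of $S'$. Since $\varphi$ is independent of $x$, the convection term $\int_0^t\langle f(s),a(\xi)\cdot\nabla\varphi\rangle\,ds$ vanishes identically; rewriting $\langle\Lambda_{f^+(t)},S'\rangle=\int_{\mathbb{T}^d}S(u^+(t))\,dx$ and letting $R\to\infty$---using the tail control (\ref{equ-37}) on $m$ together with the a priori integrability of $u$ from Definition \ref{dfn-1} to kill the truncation error---one arrives at
\begin{equation*}
\int_{\mathbb{T}^d}S(u^+(t))\,dx=\int_{\mathbb{T}^d}S(u_0)\,dx+M^S_t+\frac12\int_0^t\!\!\int_{\mathbb{T}^d}S''(u)G^2(x,u)\,dx\,ds-\langle m,S''\rangle([0,t]),
\end{equation*}
with $M^S_t:=\sum_{k\ge1}\int_0^t\int_{\mathbb{T}^d}g_k(x,u)S'(u)\,dx\,d\beta_k(s)$ a continuous martingale.

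For the bound in $L^p(\Omega;L^\infty(0,T;L^p(\mathbb{T}^d)))$ I would take $S(s)=|s|^p$, regularised near the origin when $p<2$. Convexity gives $S''\ge0$, so the kinetic-measure term $-\langle m,S''\rangle([0,t])\le0$ has a favourable sign and may be discarded in the upper estimate, while Hypothesis H yields $S''(u)G^2(x,u)\lesssim 1+|u|^p$. Taking the supremum over $t\in[0,T]$, then expectations, estimating $E\sup_{t\le T}|M^S_t|$ by the Burkholder--Davis--Gundy inequality and absorbing the martingale contribution, a Gronwall argument produces $E\big(\sup_{t\le T}\|u(t)\|_{L^p}^p\big)\le C_p$, which upgrades the essential supremum in Definition \ref{dfn-1} to a genuine supremum.

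For the continuity I would first note that $t\mapsto\int_{\mathbb{T}^d}S(u^+(t))\,dx$ is right-continuous, since on the right-hand side the martingale and the $ds$-integral are continuous and $t\mapsto\langle m,S''\rangle([0,t])$ is right-continuous; with $S(s)=|s|^p$ this gives $\|u^+(t)\|_{L^p}\to\|u^+(t_0)\|_{L^p}$ as $t\downarrow t_0$. Testing the same identity against functions $\varphi(x,\xi)=\psi(x)\theta_R(\xi)$ gives the weak convergence $u^+(t)\rightharpoonup u^+(t_0)$ in $L^p(\mathbb{T}^d)$ along $t\downarrow t_0$. For $1<p<\infty$ the uniform convexity of $L^p(\mathbb{T}^d)$ promotes weak convergence together with convergence of the norms to strong convergence, so $u^+$ is right-continuous with values in $L^p$; the companion formula for $f^-$ in the Remark after Lemma \ref{lem-1} makes $u^-$ left-continuous. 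By Proposition \ref{prp-3}, $u^+=u^-$ off an at most countable set and the one-sided limits can differ only through the time-atoms of $m$; ruling these out, equivalently showing $u^+=u^-$ for every $t$, yields a single representative with a.s. continuous $L^p$-trajectories. The borderline case $p=1$, where uniform convexity fails, I would treat directly by writing $\|u^+(t)-u^+(t_0)\|_{L^1}=\int_{\mathbb{T}^d}\int_{\mathbb{R}}|f^+(t)-f^+(t_0)|\,d\xi\,dx$ and using $f^2=f$ to reduce $L^1$-continuity to the convergence of the kinetic functions already obtained.

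The main obstacle is the rigorous passage $R\to\infty$ in the renormalised identity: the natural weight $S'$ is not compactly supported, so one must control both the truncation error in the stochastic and measure terms and the large-$\xi$ contribution of $m$, which is precisely where (\ref{equ-37}) and the uniform $L^p$-moment bound are indispensable. The second delicate point is the upgrade from one-sided weak (c\`adl\`ag) limits to genuine strong continuity, i.e.\ excluding the jumps carried by time-atoms of the kinetic measure; here the convexity and uniform-convexity structure, rather than any smoothing from a viscous term, must do the work.
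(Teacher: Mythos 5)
The paper does not prove this corollary at all: it is imported verbatim from \cite{D-V-1} (Corollary 12 there), so the relevant comparison is with Debussche--Vovelle's argument. Your renormalisation step, the passage $R\to\infty$ via (\ref{equ-37}), and the resulting $L^p$-moment bound are fine and consistent with the a priori estimates in \cite{D-V-1}. The genuine gap is in the continuity part, and it sits exactly where you yourself flag a ``delicate point'': your argument yields right-continuity of $u^+$ and left-continuity of $u^-$ in $L^p$ (via Radon--Riesz), i.e.\ a c\`adl\`ag structure with $u^+=u^-$ off a countable set --- which is just Proposition \ref{prp-3} upgraded to the strong topology --- but you never exclude jumps at the time-atoms of $m$, you only say that ``ruling these out'' would finish the proof. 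Uniform convexity cannot do this work: your own renormalised identity shows that $\|u^+(t)\|_{L^p}^p$ drops by exactly $\frac{1}{p(p-1)}\langle m, S''\rangle(\{t\})$-type amounts across an atom, so everything you have established is perfectly consistent with a solution that genuinely jumps at countably many times. No convexity of norms detects whether $m$ charges time-sections.

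The missing idea is the reduction theorem, quoted in this paper as Theorem \ref{thm-4}: for \emph{every} $t$, $f^{\pm}(t)=I_{u^{\pm}(t)>\xi}$, i.e.\ the one-sided weak limits are again indicator functions. Granting this, the jump identity of Proposition \ref{prp-3} reads, at a fixed atom time $t$,
\begin{equation*}
I_{u^{+}(t)>\xi}-I_{u^{-}(t)>\xi}=\partial_{\xi}\, m(\cdot\times\{t\}\times\cdot)
\end{equation*}
in the sense of distributions; since $\partial_{\xi}\big[(\xi-u^{-})^{+}-(\xi-u^{+})^{+}\big]$ equals the left-hand side, the atomic part of $m$ must have $\xi$-density $(\xi-u^{-}(x,t))^{+}-(\xi-u^{+}(x,t))^{+}$ up to a nonnegative constant, and this density is eventually constant equal to $u^{+}(x,t)-u^{-}(x,t)$ for large $\xi$. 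Finiteness of the kinetic measure then forces $u^{+}(t)=u^{-}(t)$ a.e.\ in $x$, a.s., so there are no jumps. Once this is in place, continuity in $L^1(\mathbb{T}^d)$ follows from the weak continuity of the kinetic function together with the identity $|f(t)-f(t_0)|=(f(t)-f(t_0))^2$ valid for $\{0,1\}$-valued functions --- the trick you invoke only for $p=1$, but which is in fact the engine of the whole proof in \cite{D-V-1} and makes the uniform-convexity detour unnecessary --- and $L^p$-continuity for all finite $p$ then follows by interpolation against the uniform higher moments. Without Theorem \ref{thm-4} (itself a consequence of the doubling-of-variables uniqueness argument), your scheme cannot close.
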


\section{Central limit theorem}\label{section3}
{\color{black}This part is devoted to} proving a central limit theorem for stochastic scalar conservation laws (\ref{sbe}).

For any $\varepsilon\in (0,1)$, let $u^{\varepsilon}$ be the unique kinetic solution to (\ref{sbe-1}). As the parameter $\varepsilon$ approaches zero, the solution $u^{\varepsilon}$ will tend to the solution of the following deterministic {\color{black}scalar conservation laws}
\begin{eqnarray}\label{e-2}
\left\{
  \begin{array}{ll}
  d\bar{u}+ {\rm{div}} (A(\bar{u}))dt=0 \quad {\rm{in}}\ \mathbb{T}^d\times {\color{black}[0,T]},\\
\bar{u}(x,0)=1 \quad {\rm{on}}\ \mathbb{T}^d.
  \end{array}
\right.
\end{eqnarray}
For (\ref{e-2}), we claim that it admits a unique solution $\bar{u}\equiv 1$. Indeed, for any $\eta>0$,
let us consider the following equation
\begin{eqnarray}\label{eeta-2}
\left\{
  \begin{array}{ll}
  d\bar{u}^{\eta}+ {\rm{div}} (A(\bar{u}^{\eta}))dt=\eta\Delta\bar{u}^{\eta}dt \quad {\rm{in}}\ \mathbb{T}^d\times {\color{black}[0,T]},\\
\bar{u}^{\eta}(x,0)=1 \quad {\rm{on}}\ \mathbb{T}^d.
  \end{array}
\right.
\end{eqnarray}
Clearly, (\ref{eeta-2}) has a unique strong solution $\bar{u}^{\eta}(t,x)\equiv1$, for any $(t,x)\in [0,T]\times\mathbb{T}^d$. As a result, by vanishing viscosity method, ({\color{black}}\ref{e-2}) has a unique kinetic solution $\bar{u}(t,x)\equiv1$, for any $(t,x)\in [0,T]\times\mathbb{T}^d$.

\smallskip
In the following, we will investigate the fluctuation behaviour of $\frac{1}{\sqrt{\varepsilon}}(u^{\varepsilon}-\bar{u})$.
Let $\bar{u}^1$ be the solution of the following SPDE
\begin{eqnarray}\label{u1}
\left\{
  \begin{array}{ll}
  d\bar{u}^1+{\rm{div}} (a(\bar{u})\bar{u}^1)dt=\Phi(\bar{u})dW(t) \ \ {\rm{in}}\ \mathbb{T}^d\times {\color{black}[0,T]},\\
\bar{u}^1(x,0)=0 \quad {\rm{on}}\ \mathbb{T}^d.
  \end{array}
\right.
\end{eqnarray}
 Taking into account $\bar{u}\equiv1$, equation (\ref{u1}) turns out to be conservation laws driven by an additive noise with flux function $A(\xi)=a(1)\xi$. It follows from Theorem \ref{thm-4} that (\ref{u1}) admits a unique kinetic solution $\bar{u}^1$.


We will show that $\frac{1}{\sqrt{\varepsilon}}(u^{\varepsilon}-\bar{u})$ converges to $\bar{u}^1$ in the space $L^1([0,T];L^1(\mathbb{T}^d))$. Although the kinetic formulations for both $\frac{1}{\sqrt{\varepsilon}}(u^{\varepsilon}-\bar{u})$ and $\bar{u}^1$ are both available, we cannot prove this convergence directly by doubling variables method. The difficulty lies in the estimation of $\tilde{K}_1$ in the following Proposition \ref{prp-2}. Concretely, when applying the doubling variables method directly to $\frac{1}{\sqrt{\varepsilon}}(u^{\varepsilon}-\bar{u})$ and $\bar{u}^1$, the term $\tilde{K}_1$ cannot converge to 0 as $\varepsilon\rightarrow 0$. The essential reason is that the doubling variables method can succeed for two equations only if they are symmetry in some sense.
To overcome this difficulty, we will introduce some auxiliary approximation processes, which are symmetric with the original equations.


Consider the following viscosity approximating process $u^{\varepsilon,\eta}$ of $u^{\varepsilon}$ given by
\begin{eqnarray}\label{uepe}
\left\{
  \begin{array}{ll}
  du^{\varepsilon,\eta}+ {\rm{div}} (A(u^{\varepsilon,\eta})) dt =\eta\Delta u^{\varepsilon,\eta}dt+\sqrt{\varepsilon}\Phi(u^{\varepsilon,\eta})dW(t)\ \ {\rm{in}}\ \ \mathbb{T}^d\times[0,T],\\
u^{\varepsilon,\eta}(x,0)=1 \quad {\rm{on}}\ \mathbb{T}^d.
  \end{array}
\right.
\end{eqnarray}
{\color{black}Referring to Proposition 23 in \cite{D-V-1},} (\ref{uepe}) has a unique kinetic solution $u^{\varepsilon,\eta}$. Moreover, by It\^{o} formula, it's readily to deduce that for any $q\geq 2$,
\begin{eqnarray}\label{r-20}
 \sup_{\varepsilon\in (0,1),\eta>0} \sup_{t\in [0,T]}E\|u^{\varepsilon,\eta}\|^q_{L^q( \mathbb{T}^d)}\leq C(p,T).
\end{eqnarray}
In fact, (\ref{r-20}) is a direct consequence of the following Lemma \ref{lem-2}.

We also need the viscosity approximating process of $\bar{u}^1$, which can be written as
\begin{eqnarray}\label{rrr-2}
\left\{
  \begin{array}{ll}
  d\bar{u}^{1,\eta}+ {\rm{div}} (a(\bar{u})\bar{u}^{1,\eta})dt=\eta\Delta \bar{u}^{1,\eta}dt+\Phi(\bar{u})dW(t) \ \ {\rm{in}}\ \mathbb{T}^d\times[0,T],\\
\bar{u}^{1,\eta}(x,0)=1  \quad {\rm{on}}\ \mathbb{T}^d.
  \end{array}
\right.
\end{eqnarray}
Since $\bar{u}\equiv 1$, due to \cite{D-V-1}, (\ref{rrr-2}) admits a unique solution $\bar{u}^{1,\eta}$. Moreover, referring to Proposition 24 in \cite{D-V-1}, we have
\begin{eqnarray}\label{r-3}
  \lim_{\eta\rightarrow 0}E\|\bar{u}^{1,\eta}-\bar{u}^{1}\|_{L^1([0,T];L^1( \mathbb{T}^d))}=0.
\end{eqnarray}

With the above approximation processes $u^{\varepsilon,\eta}, \bar{u}^{\eta}$ and $ \bar{u}^{1,\eta}$, it gives
\begin{align}\notag
&E\Big\|\frac{u^{\varepsilon}-\bar{u}}{\sqrt{\varepsilon}}-\bar{u}^1\Big\|_{L^1([0,T];L^1(\mathbb{T}^d))}\\\notag
&\leq E\Big\|\frac{u^{\varepsilon}-\bar{u}}{\sqrt{\varepsilon}}-\frac{u^{\varepsilon,\eta}-\bar{u}^{\eta}}{\sqrt{\varepsilon}}\Big\|_{L^1([0,T];L^1(\mathbb{T}^d))}
+E\Big\|\frac{u^{\varepsilon,\eta}-\bar{u}^{\eta}}{\sqrt{\varepsilon}}-\bar{u}^{1,\eta}\Big\|_{L^1([0,T];L^1(\mathbb{T}^d))} \\
\label{3terms}
& +E\|\bar{u}^{1,\eta}-\bar{u}^1\|_{L^1([0,T];L^1(\mathbb{T}^d))}.
\end{align}
With the help of (\ref{r-3}), it remains to handle the other two terms.
Before giving the proof process, we need a priori estimate.
Recall that $u^{\varepsilon,\eta}$ is the unique solution of (\ref{uepe}) and $\bar{u}^{\eta}$ is the unique solution of (\ref{eeta-2}). We have the following result relating the difference between $u^{\varepsilon,\eta}$ and $\bar{u}^{\eta}$.
{\color{black}
\begin{lemma}\label{lem-2}
For any $\varepsilon\in (0,1)$ and $q\geq 2$, there exists a constant $C$ depending on $q$ and $T$ such that
\begin{align}\label{l4}
\sup_{t\in [0,T]}\sup_{\eta>0} E \|u^{\varepsilon,\eta}(t)-\bar{u}^{\eta}(t)\|^q_{L^q(\mathbb{T}^d)}
\leq C(q,T)\varepsilon^{\frac{p}{2}}.
\end{align}
\end{lemma}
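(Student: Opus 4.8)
The plan is to apply It\^o's formula to the $L^q$-functional of the difference and to exploit two cancellations that are special to constant initial data. Set $v^{\varepsilon,\eta}:=u^{\varepsilon,\eta}-\bar u^{\eta}=u^{\varepsilon,\eta}-1$, recalling $\bar u^{\eta}\equiv 1$. Subtracting \eqref{eeta-2} from \eqref{uepe} and using $\Delta\bar u^{\eta}=0$ and ${\rm div}(A(\bar u^{\eta}))=0$, the process $v^{\varepsilon,\eta}$ solves
\[
dv^{\varepsilon,\eta}=\big[-{\rm div}(A(u^{\varepsilon,\eta}))+\eta\Delta v^{\varepsilon,\eta}\big]\,dt+\sqrt{\varepsilon}\,\Phi(u^{\varepsilon,\eta})\,dW(t),\qquad v^{\varepsilon,\eta}(0)=0.
\]
Since $q\ge 2$ the map $r\mapsto|r|^q$ is $C^2$, so It\^o's formula (justified for each fixed $\varepsilon,\eta$ by the parabolic regularity of the viscous solution, after a localisation that is removed using the finiteness of the moments of $u^{\varepsilon,\eta}$) gives, upon taking expectations so that the stochastic integral drops out,
\[
E\|v^{\varepsilon,\eta}(t)\|_{L^q}^q=E\!\int_0^t\!\!\int_{\mathbb{T}^d} q|v|^{q-2}v\big[-{\rm div}A(u^{\varepsilon,\eta})+\eta\Delta v\big]\,dx\,ds+\frac{q(q-1)}{2}\varepsilon\,E\!\int_0^t\!\!\int_{\mathbb{T}^d}|v|^{q-2}G^2(x,u^{\varepsilon,\eta})\,dx\,ds.
\]

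Next I would dispose of the first two contributions. For the flux term, write ${\rm div}A(u^{\varepsilon,\eta})=a(u^{\varepsilon,\eta})\cdot\nabla v$ and introduce the entropy flux $\Psi$ with $\Psi'(\xi)=q|\xi-1|^{q-2}(\xi-1)a(\xi)$; then $q|v|^{q-2}v\,a(u^{\varepsilon,\eta})\cdot\nabla v={\rm div}(\Psi(u^{\varepsilon,\eta}))$, whose integral over $\mathbb{T}^d$ vanishes by periodicity. This is precisely where the constancy of $\bar u^{\eta}$ is essential: for non-constant $\bar u$ the flux term would leave the uncontrollable remainder involving ${\rm div}\,a(\bar u)$ flagged in the introduction. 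The viscous term is nonpositive, $\eta\int_{\mathbb{T}^d}q|v|^{q-2}v\,\Delta v\,dx=-\eta q(q-1)\int_{\mathbb{T}^d}|v|^{q-2}|\nabla v|^2\,dx\le 0$, and may be discarded. For the It\^o correction I use \eqref{equ-28} together with $|u^{\varepsilon,\eta}|^2=|v+1|^2\le 2|v|^2+2$ to get $G^2(x,u^{\varepsilon,\eta})\lesssim 1+|v|^2$, so with $\phi(t):=E\|v^{\varepsilon,\eta}(t)\|_{L^q}^q$,
\[
\phi(t)\le C_q\,\varepsilon\, E\!\int_0^t\!\!\int_{\mathbb{T}^d}\big(|v|^{q-2}+|v|^q\big)\,dx\,ds .
\]

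The final and decisive step is to read off the sharp power of $\varepsilon$. Since $|\mathbb{T}^d|=1$, Jensen's inequality in space and in probability yields $E\int_{\mathbb{T}^d}|v|^{q-2}\,dx\le \phi(t)^{1-2/q}$, so that
\[
\phi(t)\le C_q\,\varepsilon\int_0^t\big[\phi(s)^{1-2/q}+\phi(s)\big]\,ds,\qquad \phi(0)=0 .
\]
The sublinear exponent $1-2/q<1$ is what forces the correct scaling: rescaling $\psi:=\varepsilon^{-q/2}\phi$ and using $\varepsilon\cdot\varepsilon^{(q/2)(1-2/q)}=\varepsilon^{q/2}$ turns this into an $\varepsilon$-independent inequality $\psi(t)\le C_q\int_0^t[\psi^{1-2/q}+\psi]\,ds$ (bounding $\varepsilon\le 1$ on the linear term). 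I would then invoke Bihari's inequality with $\omega(r)=r^{1-2/q}+r$; because $\int_{0}\omega(r)^{-1}\,dr$ converges (as $1-2/q<1$) the degenerate initial value $\psi(0)=0$ is harmless, and one obtains $\psi(t)\le C(q,T)$ uniformly in $\varepsilon,\eta$ on $[0,T]$. Undoing the rescaling gives $\sup_{0\le t\le T}\sup_{\eta>0}\phi(t)\le C(q,T)\varepsilon^{q/2}$, which is \eqref{l4} with the sharp exponent $q/2$ (this is precisely the rate the CLT needs, since $v^{\varepsilon,\eta}/\sqrt{\varepsilon}$ must stay bounded). I expect this nonlinear Gronwall argument to be the main obstacle, since a naive linear Gronwall only delivers the suboptimal rate $\varepsilon^{1}$; the case $q=2$ is elementary, because then $|v|^{q-2}\equiv1$ integrates to the constant $|\mathbb{T}^d|=1$ and an ordinary Gronwall already gives $\varepsilon=\varepsilon^{q/2}$.
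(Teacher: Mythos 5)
Your proof is correct and reaches the intended bound, but it diverges from the paper at the decisive step, so a comparison is in order. The skeleton is identical: both you and the paper apply an It\^o-type formula to an $L^q$-functional of $w_0^{\varepsilon,\eta}=u^{\varepsilon,\eta}-\bar u^{\eta}$, kill the flux term by periodicity (your entropy-flux $\Psi$ is the paper's $H(\xi)=\int_0^\xi\varphi_n''(\zeta)A(\zeta+1)\,d\zeta$), discard the nonpositive viscous term, and drop the martingale under expectation, leaving only the It\^o correction $\sim\varepsilon\int(|v|^{q-2}+|v|^q)$. Two differences. First, rigor: the paper does not apply It\^o to $|\xi|^q$ directly but to $C^2$ approximations $\varphi_n$ with \emph{quadratic} growth (Proposition A.1 of \cite{DHV}), runs Gronwall uniformly in $n$, and passes $n\to\infty$; your direct application with localisation is defensible (kinetic solutions have all moments finite by definition, so removing the localisation for fixed $\varepsilon,\eta$ is not circular), but it is sketched where the paper is careful. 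Second, and more substantively, the extraction of the rate: the paper handles $\varepsilon\int\varphi_n''(w)\,dx$ by a \emph{pointwise} Young inequality with exponents $\frac q2,\frac{q}{q-2}$, using the property $(\varphi_n'')^{q/(q-2)}\leq C(q)\varphi_n$ from (\ref{r-6}), to get $\varepsilon\varphi_n''\leq C(q)\varepsilon^{q/2}+\varphi_n$, after which a plain \emph{linear} Gronwall already yields the sharp rate $\varepsilon^{q/2}$. You instead use Jensen to get $\phi^{1-2/q}$ and then a Bihari/Osgood nonlinear Gronwall after the rescaling $\psi=\varepsilon^{-q/2}\phi$; this is valid (the maximal solution $G^{-1}(Ct)$ with $G(u)=\int_0^u\frac{dr}{r^{1-2/q}+r}$ is finite on $[0,T]$ since $G(\infty)=\infty$), but it is heavier than necessary, and your closing claim that a linear Gronwall can only deliver the suboptimal rate $\varepsilon^{1}$ is mistaken: applying Young's inequality to your own term $\varepsilon\phi^{1-2/q}\leq C(q)\varepsilon^{q/2}+\phi$ collapses your nonlinear inequality to a linear one with the sharp $\varepsilon^{q/2}$ forcing, which is exactly the paper's mechanism. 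One last remark: the exponent $\varepsilon^{p/2}$ in the statement of (\ref{l4}) is evidently a typo for $\varepsilon^{q/2}$ (compare the end of the paper's proof and Corollary \ref{cor-1}), and your argument proves the intended $\varepsilon^{q/2}$.
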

}
\begin{proof}
Let $w_0^{\varepsilon,\eta}:=u^{\varepsilon,\eta}-\bar{u}^{\eta}$, by using $\bar{u}^{\eta}=1$, it follows from (\ref{eeta-2}) and (\ref{uepe}) that $w_0^{\varepsilon,\eta}$ satisfies
\begin{equation*}
dw_0^{\varepsilon,\eta}+ {\rm{div}} ( A(w_0^{\varepsilon,\eta}+1))dt=\eta\Delta w_0^{\varepsilon,\eta}dt+\sqrt{\varepsilon}\Phi(u^{\varepsilon,\eta})dW(t),
\end{equation*}
with $w_0^{\varepsilon,\eta}(0)=0$.

Let $\varphi(\xi)=|\xi|^{q}$ and $\psi(x)=1$. To get the proof of (\ref{l4}), we follow the approach of Proposition 5.1 in \cite{DHV}
and introduce functions $\varphi_n\in C^2(\mathbb{R})$ that approximate $\varphi$ and
have quadratic growth at infinity. Concretely, let
\begin{eqnarray*}
 \varphi_n(\xi)=\left\{
                  \begin{array}{ll}
                    |\xi|^q, & |\xi|\leq n, \\
                    n^{q-2}\Big[\frac{q(q-1)}{2}\xi^2-q(q-2)n|\xi|+\frac{(q-1)(q-2)}{2}n^2\Big], &|\xi|> n.
                  \end{array}
                \right.
\end{eqnarray*}
Clearly, by the definition of $\varphi_n$, we have
\begin{eqnarray}\notag
  &&|\xi\varphi'_n(\xi)|\leq q\varphi_n(\xi), \quad
|\varphi'_n(\xi)|\leq q(1+\varphi_n(\xi)),\\
\label{r-6}
 &&|\varphi'_n(\xi)|\leq |\xi|\varphi''_n(\xi),\quad
\xi^2\varphi''_n(\xi)\leq q(q-1)\varphi_n(\xi),\quad
(\varphi''_n(\xi))^{\frac{q}{q-2}}\leq C(q)\varphi_n(\xi),
\end{eqnarray}
for all $\xi\in \mathbb{R}$, $n\in \mathbb{N}$, $q\in [2,\infty)$.
Then, functions $\varphi_n$ and $\psi(x)=1$ satisfy conditions requiblack by Proposition A.1 in \cite{DHV}. Hence, by Proposition A.1 in \cite{DHV}, it follows that
\begin{align*}
  \int_{\mathbb{T}^d}\varphi_n(w_0^{\varepsilon,\eta}(t))dx
=&-\int^t_0\int_{\mathbb{T}^d}\varphi'_n(w_0^{\varepsilon,\eta}) {\rm{div}} (A(w_0^{\varepsilon,\eta}+1))dxds
+\eta\int^t_0\int_{\mathbb{T}^d}\varphi'_n(w_0^{\varepsilon,\eta})\Delta w_0^{\varepsilon,\eta}dxds\\
+&\sqrt{\varepsilon}\sum_{k\geq 1}\int^t_0\int_{\mathbb{T}^d}\varphi'_n(w_0^{\varepsilon,\eta})g_k(x,w_0^{\varepsilon,\eta}+1)dxd\beta_k(s)
+\frac{\varepsilon}{2}\int^t_0\int_{\mathbb{T}^d}\varphi''_n(w_0^{\varepsilon,\eta})G^2(w_0^{\varepsilon,\eta}+1)dxds\\
=:& I_1+I_2+I_3+I_4.
\end{align*}
Setting $H(\xi)=\int^{\xi}_0\varphi''_n(\zeta)A(\zeta+1)d\zeta$, $I_1$ vanishes due to the boundary conditions.
By integration by parts, $I_2$ is nonpositive. Clearly, $EI_3=0$.
The term $I_4$ can be estimated by (\ref{equ-28}) and (\ref{r-6}),
\begin{align*}
  I_4\leq & \frac{\varepsilon}{2}D_0\int^t_0\int_{\mathbb{T}^d}\varphi''_n(w_0^{\varepsilon,\eta})(2+|w_0^{\varepsilon,\eta}|^2)dxds\\
\leq &\frac{\varepsilon q(q-1)}{2}D_0\int^t_0\int_{\mathbb{T}^d}\varphi_n(w_0^{\varepsilon,\eta})dxds
+\varepsilon D_0\int^t_0\int_{\mathbb{T}^d}\varphi''_n(w_0^{\varepsilon,\eta})dxds\\
\leq &\frac{\varepsilon q(q-1)}{2}D_0\int^t_0\int_{\mathbb{T}^d}\varphi_n(w_0^{\varepsilon,\eta})dxds
+C(q,D_0)\varepsilon^{\frac{q}{2}}+\int^t_0\int_{\mathbb{T}^d}\varphi_n(w_0^{\varepsilon,\eta})dxds.
\end{align*}
Hence,
\begin{align*}
\sup_{t\in [0,T]} E \int_{\mathbb{T}^d}\varphi_n(w_0^{\varepsilon,\eta}(t))dx
\leq C(q,T,D_0)\varepsilon^{\frac{q}{2}}+\Big[ \frac{\varepsilon q(q-1)}{2}D_0+1\Big]E\int^t_0\int_{\mathbb{T}^d}\varphi_n(w_0^{\varepsilon,\eta})dxds.
\end{align*}
By Gronwall inequality, we get
\begin{align*}
\sup_{t\in [0,T]} E \int_{\mathbb{T}^d}\varphi_n(w_0^{\varepsilon,\eta}(t))dx
\leq C(q,T)\varepsilon^{\frac{q}{2}}.
\end{align*}
Let $n\rightarrow \infty$, we have
\begin{align*}
\sup_{t\in [0,T]} E \|w_0^{\varepsilon,\eta}(t)\|^q_{L^q(\mathbb{T}^d)}
\leq C(q,T)\varepsilon^{\frac{q}{2}}.
\end{align*}

\end{proof}
Based on Lemma \ref{lem-2} and by Theorem 24 in \cite{D-V-1}, we get the following result.
\begin{cor}\label{cor-1}
  For any $\varepsilon\in (0,1)$ and $q\geq 2$, there exists a constant $C$ depending on $q$ and $T$ such that
\begin{align}\label{r-14}
E \|u^{\varepsilon}-\bar{u}\|^q_{L^q([0,T];L^q( \mathbb{T}^d))}
=\lim_{\eta\rightarrow 0}E \|u^{\varepsilon,\eta}-\bar{u}^{\eta}\|^q_{L^q([0,T];L^q( \mathbb{T}^d))}
\leq C(q,T)\varepsilon^{\frac{q}{2}}.
\end{align}
\end{cor}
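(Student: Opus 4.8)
The plan is to obtain the displayed bound by integrating Lemma \ref{lem-2} in time, and to establish the stated identity by identifying the $\eta\to 0$ limit through the vanishing viscosity convergence of \cite{D-V-1} combined with the uniform moment estimate (\ref{r-20}).

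First I would integrate the pointwise-in-time bound of Lemma \ref{lem-2} over $[0,T]$. By Fubini's theorem,
\[
E\|u^{\varepsilon,\eta}-\bar{u}^{\eta}\|^q_{L^q([0,T];L^q(\mathbb{T}^d))}
=\int_0^T E\|u^{\varepsilon,\eta}(t)-\bar{u}^{\eta}(t)\|^q_{L^q(\mathbb{T}^d)}\,dt
\leq C(q,T)\varepsilon^{\frac{q}{2}},
\]
where the constant has absorbed the factor $T$ and, most importantly, the bound is uniform in $\eta>0$. Consequently the quantity on the right of the claimed identity is dominated by $C(q,T)\varepsilon^{q/2}$ independently of whether its limit exists, so it remains only to prove the identity itself.

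For the identity, recall that $\bar{u}^{\eta}\equiv 1\equiv\bar{u}$ for every $\eta>0$, so that $u^{\varepsilon,\eta}-\bar{u}^{\eta}=u^{\varepsilon,\eta}-1$ and $u^{\varepsilon}-\bar{u}=u^{\varepsilon}-1$; the assertion reduces to
\[
\lim_{\eta\to 0}E\|u^{\varepsilon,\eta}-1\|^q_{L^q([0,T];L^q(\mathbb{T}^d))}=E\|u^{\varepsilon}-1\|^q_{L^q([0,T];L^q(\mathbb{T}^d))}.
\]
Theorem 24 in \cite{D-V-1} yields $\lim_{\eta\to 0}E\|u^{\varepsilon,\eta}-u^{\varepsilon}\|_{L^1([0,T];L^1(\mathbb{T}^d))}=0$, i.e. $u^{\varepsilon,\eta}\to u^{\varepsilon}$ in $L^1(\Omega\times[0,T]\times\mathbb{T}^d)$ and hence in measure on this finite measure space; since $\xi\mapsto|\xi-1|^q$ is continuous, it follows that $|u^{\varepsilon,\eta}-1|^q\to|u^{\varepsilon}-1|^q$ in measure. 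To promote this to convergence of the integrals I would appeal to uniform integrability: by (\ref{r-20}) the family $\{u^{\varepsilon,\eta}\}_{\eta>0}$ is bounded in $L^{q'}(\Omega\times[0,T]\times\mathbb{T}^d)$ for every $q'>q$, whence $\{|u^{\varepsilon,\eta}-1|^q\}_{\eta>0}$ is bounded in $L^{q'/q}$ with exponent $q'/q>1$ and is therefore uniformly integrable. Vitali's convergence theorem then gives the convergence of the $q$-th moments, which is precisely the required identity.

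Combining the two steps, the uniform bound of the first step passes to the $\eta\to 0$ limit and yields $\leq C(q,T)\varepsilon^{q/2}$. The only genuinely delicate point is the last one: upgrading the $L^1$ vanishing-viscosity convergence to convergence of the full $L^q$-norms raised to the power $q$, which is where the uniform higher-moment bound (\ref{r-20}) is indispensable. The time integration and the final estimate are routine once Lemma \ref{lem-2} and (\ref{r-20}) are in hand.
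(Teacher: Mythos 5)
Your proposal is correct and follows the same route the paper intends: the paper states the corollary as a direct consequence of Lemma \ref{lem-2} and the vanishing-viscosity convergence of Theorem 24 in \cite{D-V-1}, which is precisely your two-step argument. Your Vitali/uniform-integrability step (using the uniform higher moments from (\ref{r-20}), itself a consequence of Lemma \ref{lem-2}, to upgrade the $L^1$ convergence $u^{\varepsilon,\eta}\to u^{\varepsilon}$ to convergence of the $q$-th moments) is exactly the detail the paper leaves implicit, and it is carried out correctly.
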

\smallskip

Denote by $v^{\varepsilon}:=\frac{u^{\varepsilon}-\bar{u}}{\sqrt{\varepsilon}}$ and $v^{\varepsilon,\eta}:=\frac{u^{\varepsilon,\eta}-\bar{u}^{\eta}}{\sqrt{\varepsilon}}$. We derive from (\ref{sbe-1}) and (\ref{e-2}) that $v^{\varepsilon}$ satisfies
\begin{align}\label{vep}
dv^{\varepsilon}+\frac{1}{\sqrt{\varepsilon}} {\rm{div}} \Big(A(\sqrt{\varepsilon}v^{\varepsilon}+1)-A(1)\Big)dt&=\Phi(\sqrt{\varepsilon}v^{\varepsilon}+1)dW(t),
\end{align}
with initial value $v^{\varepsilon}(0)=0$.
Moreover, it follows from (\ref{eeta-2}) and (\ref{uepe}) that $v^{\varepsilon,\eta}$ fulfills
\begin{equation}\label{vepe}
dv^{\varepsilon,\eta}+\frac{1}{\sqrt{\varepsilon}} {\rm{div}} \Big(A(\sqrt{\varepsilon}v^{\varepsilon,\eta}+1)-A(1)\Big)dt=\eta\Delta v^{\varepsilon,\eta}dt+\Phi(\sqrt{\varepsilon}v^{\varepsilon,\eta}+1)dW(t),
\end{equation}
with initial value $v^{\varepsilon,\eta}(0)=0$.

With the help of Lemma \ref{lem-2} and Corollary \ref{cor-1}, it holds that
{\color{black}\begin{lemma}\label{lem-3}
For any $q\geq 2$, there exists a constant $C$ depending on $q$ and $T$ such that
\begin{align}\label{r-4}
\sup_{\varepsilon\in (0,1)}E\|v^{\varepsilon}\|^q_{L^q([0,T];L^q( \mathbb{T}^d))}\leq C(q,T),\\
\label{r-5}
\sup_{\varepsilon\in (0,1),\eta>0}E\|v^{\varepsilon,\eta}\|^q_{L^q([0,T];L^q( \mathbb{T}^d))}\leq C(q,T).
\end{align}
Moreover, by the definitions of $v^{\varepsilon}$ and $v^{\varepsilon,\eta}$, we have
\begin{align}\label{r-15}
\sup_{\varepsilon\in (0,1)}E\|\sqrt{\varepsilon}v^{\varepsilon}\|^q_{L^q([0,T];L^q( \mathbb{T}^d))}\leq& C(q,T),\\
\label{r-16}
\sup_{\varepsilon\in (0,1),\eta>0}E\|\sqrt{\varepsilon}v^{\varepsilon,\eta}\|^q_{L^q([0,T];L^q( \mathbb{T}^d))}\leq & C(q,T).
\end{align}

\end{lemma}
}

In view of (\ref{r-4}) and (\ref{r-5}), we can apply the doubling variables method to show the following result, i.e., the convergence of the first term in the righthand side of (\ref{3terms}).
\begin{prp}\label{prp-2}
We have
\begin{equation*}
\lim_{\eta\rightarrow0}\sup_{\varepsilon\in(0,1)}E\Big\|\frac{u^{\varepsilon}-\bar{u}}{\sqrt{\varepsilon}}-\frac{u^{\varepsilon,\eta}-\bar{u}^{\eta}}{\sqrt{\varepsilon}}\Big\|_{L^1([0,T];L^1(\mathbb{T}^d))}=0.
\end{equation*}
\end{prp}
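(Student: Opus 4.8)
The plan is to apply the doubling-of-variables method directly to the two processes $v^\varepsilon:=(u^\varepsilon-\bar u)/\sqrt\varepsilon$ and $v^{\varepsilon,\eta}:=(u^{\varepsilon,\eta}-\bar u^\eta)/\sqrt\varepsilon$, rather than to the original solutions. Since $\bar u\equiv\bar u^\eta\equiv1$, the quantity to be estimated is exactly $E\|v^\varepsilon-v^{\varepsilon,\eta}\|_{L^1([0,T];L^1(\mathbb{T}^d))}$, and by (\ref{vep})--(\ref{vepe}) both $v^\varepsilon$ and $v^{\varepsilon,\eta}$ solve a conservation law with the \emph{same} flux $A_\varepsilon(\xi):=\frac{1}{\sqrt\varepsilon}(A(\sqrt\varepsilon\xi+1)-A(1))$ (so $a_\varepsilon(\xi)=a(\sqrt\varepsilon\xi+1)$), the \emph{same} noise coefficient $\Phi(\sqrt\varepsilon\,\cdot\,+1)$, and the same zero initial datum, the only difference being the viscous term $\eta\Delta v^{\varepsilon,\eta}$. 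This built-in symmetry is precisely what the introduction alludes to, and it is what allows the doubling argument to close here (whereas a direct comparison of $v^\varepsilon$ with $\bar u^1$, which has the different flux $a(1)\xi$, would not).

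Next I would write the kinetic formulations, via Lemma \ref{lem-1}, of $f_1:=I_{v^\varepsilon>\xi}$ (with kinetic measure $m_1\ge0$) and $f_2:=I_{v^{\varepsilon,\eta}>\zeta}$ (whose kinetic measure is the parabolic dissipation $n_2=\eta|\nabla v^{\varepsilon,\eta}|^2\delta_{v^{\varepsilon,\eta}=\zeta}$, accompanied by the extra diffusion $\eta\Delta_y f_2$). Testing the product $f_1(x,t,\xi)\bar{f}_2(y,t,\zeta)$ against $\varrho_\kappa(x-y)\rho_\delta(\xi-\zeta)$, applying the It\^o product rule, taking expectations, and adding the symmetric contribution with $f_1,\bar{f}_2$ interchanged, one obtains an inequality whose left-hand side converges, as the parameters approach the diagonal, to $E\int_{\mathbb{T}^d}|v^\varepsilon(t)-v^{\varepsilon,\eta}(t)|\,dx$; the initial term drops out since the initial data coincide.

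On the right-hand side four groups of terms appear. The convective term is $\int f_1\bar{f}_2\,(a_\varepsilon(\xi)-a_\varepsilon(\zeta))\cdot\nabla_x\varrho_\kappa\,\rho_\delta$; on the support of $\rho_\delta$ one has $|\xi-\zeta|\lesssim\delta$, so by (\ref{qeq-22}) together with the uniform moment bounds (\ref{r-4})--(\ref{r-16}) (which control the weight $\Gamma(\sqrt\varepsilon\xi+1,\sqrt\varepsilon\zeta+1)$, this being exactly why the bounds on $\sqrt\varepsilon v^\varepsilon$ and $\sqrt\varepsilon v^{\varepsilon,\eta}$ are needed) this term is $O(\sqrt\varepsilon\,\delta/\kappa)$. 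The It\^o-correction term, using (\ref{equ-29}) and the \emph{identical} noise coefficients, is bounded by a multiple of $\int\varrho_\kappa(x-y)\rho_\delta(v^\varepsilon-v^{\varepsilon,\eta})\big(|x-y|^2+\varepsilon|v^\varepsilon-v^{\varepsilon,\eta}|^2\big)$, which splits into an $O(\kappa^2/\delta)$ piece and an $O(\varepsilon\delta)$ piece. The measure contributions of $m_1$ and $n_2$ carry a favourable sign and are discarded. What survives is the viscous term $\tilde K_1=\eta\int f_1\bar{f}_2\,\Delta\varrho_\kappa(x-y)\,\rho_\delta$, which is the genuine obstacle: as $\kappa\to0$ the factor $\Delta\varrho_\kappa$ blows up like $\kappa^{-2}$, so $\tilde K_1$ cannot be dismissed by naively sending the parameters to the diagonal.

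The decisive step is thus the estimation of $\tilde K_1$, uniformly in $\varepsilon$. Integrating by parts once in $y$ and using $\nabla_y f_2=\delta_{v^{\varepsilon,\eta}=\zeta}\nabla_y v^{\varepsilon,\eta}$ rewrites it as $\eta\int \bar R_\delta(v^\varepsilon(x)-v^{\varepsilon,\eta}(y))\,\nabla_y v^{\varepsilon,\eta}(y)\cdot\nabla_x\varrho_\kappa(x-y)$ with $|\bar R_\delta|\le1$; Cauchy--Schwarz and $\|\nabla\varrho_\kappa\|_{L^1}\lesssim\kappa^{-1}$ then give $E|\tilde K_1|\lesssim \eta\kappa^{-1}\big(E\int_0^T\|\nabla v^{\varepsilon,\eta}\|_{L^2}^2\,ds\big)^{1/2}$. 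The energy identity for (\ref{vepe}), together with (\ref{equ-28}) and the uniform bound (\ref{r-16}), yields the dissipation estimate $\eta\,E\int_0^T\|\nabla v^{\varepsilon,\eta}\|_{L^2}^2\,ds\le C$ uniformly in $\varepsilon,\eta$, whence $E|\tilde K_1|\lesssim\sqrt\eta/\kappa$. Coupling the parameters to $\eta$, for instance $\delta=\eta^{1/2}$ and $\kappa=\eta^{3/8}$, drives every error term ($\sqrt\varepsilon\,\delta/\kappa$, $\kappa^2/\delta$, $\varepsilon\delta$, and $\sqrt\eta/\kappa$) to $0$ as $\eta\to0$, uniformly in $\varepsilon\in(0,1)$. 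This gives $\sup_\varepsilon E\int_{\mathbb{T}^d}|v^\varepsilon(t)-v^{\varepsilon,\eta}(t)|\,dx\lesssim\eta^{1/8}$ for each $t$, and integrating over $[0,T]$ yields the claim. The hard part is exactly this control of $\tilde K_1$: it is the only place where the viscosity fails to cancel, and closing it requires both the uniform-in-$\varepsilon$ dissipation bound and a careful coupling of the doubling parameters to $\eta$.
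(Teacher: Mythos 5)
Your proposal is correct and follows the paper's strategy in every step but one: like the paper, you apply doubling of variables directly to the symmetric pair $v^{\varepsilon}$, $v^{\varepsilon,\eta}$ (same flux, same noise coefficient, zero initial data), discard the signed kinetic-measure contributions, bound the convective and It\^{o}-correction terms via (\ref{qeq-22}), (\ref{equ-29}) and the uniform moments (\ref{r-15})--(\ref{r-16}), and close by coupling the mollification parameters to $\eta$. The genuine divergence is the viscous term (your $\tilde K_1$; the paper's $\tilde K_3$): the paper bounds it crudely by $\eta t\gamma^{-2}$ using only $\|\Delta\rho_{\gamma}\|_{L^1}\lesssim\gamma^{-2}$ and then takes $\gamma=\eta^{1/3}$, so that $\eta\gamma^{-2}=\eta^{1/3}\rightarrow 0$, with no integration by parts and no energy estimate at all. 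You instead integrate by parts once in $y$ and invoke the uniform dissipation bound $\eta E\int_0^T\|\nabla v^{\varepsilon,\eta}\|_{L^2}^2\,ds\leq C$, obtaining $O(\sqrt{\eta}/\kappa)$; that dissipation bound is not stated in the paper, but it does follow from the $q=2$ computation in the proof of Lemma \ref{lem-2} if one keeps the nonpositive term $I_2$ instead of discarding it, so your route is valid and even yields a slightly better rate, at the price of an extra lemma the paper avoids. Two small inaccuracies, neither fatal: (i) the convective term is $O(\delta/\kappa)$ uniformly in $\varepsilon$, not $O(\sqrt{\varepsilon}\,\delta/\kappa)$ --- the factor $\sqrt{\varepsilon}$ from $|a(\sqrt{\varepsilon}\xi+1)-a(\sqrt{\varepsilon}\zeta+1)|\leq\Gamma\cdot\sqrt{\varepsilon}|\xi-\zeta|$ is consumed when the polynomial weight $\Gamma(\sqrt{\varepsilon}\xi+1,\sqrt{\varepsilon}\zeta+1)$ is integrated against the kinetic measures, which is precisely why the paper's bound reads $C(T)\delta\gamma^{-1}\big(C(p)+|\sqrt{\varepsilon}\delta|^{p-1}\big)$; with your coupling $\delta/\kappa=\eta^{1/8}$ this changes nothing. (ii) The passage from the mollified functional back to $E\int_{\mathbb{T}^d}|v^{\varepsilon}-v^{\varepsilon,\eta}|\,dx$ requires the identity-approximation error to be controlled \emph{uniformly} in $\varepsilon$ and $\eta$, since the parameters are tied to $\eta$ alone and a supremum over $\varepsilon$ is taken; the paper devotes (\ref{qq-3})--(\ref{qq-4-1}) to exactly this, using the uniform integrability coming from (\ref{r-5}). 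Your uniform moment bounds give the same conclusion, but this step should be made explicit rather than absorbed into ``converges as the parameters approach the diagonal.''
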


\begin{proof}

Denote by $f_1(x,t,\xi):=I_{v^{\varepsilon}(x,t)>\xi}$ and $f_2(y,t,\zeta):=I_{v^{\varepsilon,\eta}(y,t)>\zeta}$ with the corresponding kinetic measures $m^{\varepsilon}_1$, $m^{\varepsilon,\eta}_2$ and initial values $f_{1,0}=I_{0>\xi}$, $f_{2,0}=I_{0>\zeta}$, respectively. {\color{black}Let $\mathbb{T}^d_x$ represent a $d-$dimensional torus with respect to $x$ and
 $\mathbb{R}_{\xi}$ stand for the real line with respect to $\xi$. Similar to (\ref{qq-17}), for any $\varphi_1\in C^{1}_c(\mathbb{T}^d_x\times \mathbb{R}_{\xi})$} and $t\in [0,T)$, we get
\begin{eqnarray*}\notag
\langle f^+_1(t),\varphi_1\rangle&=&\langle f_{1,0}, \varphi_1\rangle+\int^t_0\langle f_1(s), a(\sqrt{\varepsilon}\xi+1)\cdot \nabla_x \varphi_1(s)\rangle ds\\
 && +\frac{1}{2}\int^t_0\int_{\mathbb{T}^d}\int_{\mathbb{R}}\partial_{\xi}\varphi_1(x,\xi)G^2(x,\sqrt{\varepsilon}\xi+1)d\nu^{1,\varepsilon}_{x,s}(\xi)dxds- \langle m^{\varepsilon}_1,\partial_{\xi} \varphi_1\rangle([0,t])\\
 && +\sum_{k\geq 1}\int^t_0\int_{\mathbb{T}^d}\int_{\mathbb{R}}g_k(x,\sqrt{\varepsilon}\xi+1)\varphi_1(x,\xi)d\nu^{1,\varepsilon}_{x,s}(\xi)dxd\beta_k(s)
, \quad  a.s.,
\end{eqnarray*}
 where $\nu^{1,\varepsilon}_{x,s}(\xi)=-\partial_{\xi}f_1(x,s,\xi)=\delta_{v^{\varepsilon}(x,s)=\xi}$.
Moreover, according to (\ref{vepe}), for any
$\varphi_2\in C^{1}_c(\mathbb{T}^d_y\times \mathbb{R}_{\zeta})$ and $t\in [0,T)$, it follows that
\begin{eqnarray*}\notag
\langle \bar{f}^+_2(t),\varphi_2\rangle&=&\langle \bar{f}_{2,0}, \varphi_2\rangle +\int^t_0\langle \bar{f}_2(s), a(\sqrt{\varepsilon}\zeta+1)\cdot \nabla_y \varphi_2(s)+\eta \Delta \varphi_2(s) \rangle ds \\
 && -\frac{1}{2}\int^t_0\int_{\mathbb{T}^d}\int_{\mathbb{R}}\partial_{\zeta}\varphi_2(y,\zeta)G^2(y,\sqrt{\varepsilon}\zeta+1)d\nu^{2,\varepsilon,\eta}_{y,s}(\zeta)dyds+\langle m^{\varepsilon,\eta}_2,\partial_{\zeta} \varphi_2\rangle([0,t])\\
&& -\sum_{k\geq 1}\int^t_0\int_{\mathbb{T}^d}\int_{\mathbb{R}}\tilde{g}_k(y,\sqrt{\varepsilon}\zeta+1)\varphi_2(y,\zeta)d\nu^{2,\varepsilon,\eta}_{y,s}(\zeta)dyd\beta_k(s)\quad a.s.,
\end{eqnarray*}
where $\nu^{2,\varepsilon}_{y,s}(\zeta)=\partial_{\zeta}\bar{f}_2(y,s,\zeta)=\delta_{v^{\varepsilon,\eta}(y,s)=\zeta}$.

Denote the duality distribution over $\mathbb{T}^N_x\times \mathbb{R}_\xi\times \mathbb{T}^N_y\times \mathbb{R}_\zeta$ by $\langle\langle\cdot,\cdot\rangle\rangle$. Setting $\alpha(x,\xi,y,\zeta)=\varphi_1(x,\xi)\varphi_2(y,\zeta)$.
By the same method as Proposition 13 in \cite{D-V-1}, using (\ref{e-80}) for $f_1$ and $f_2$, we obtain that $\langle f^+_1(t), \varphi_1\rangle\langle \bar{f}^+_2(t),\varphi_2 \rangle=\langle\langle f^+_1(t)\bar{f}^+_2(t), \alpha \rangle\rangle$
satisfies
\begin{align*}\notag
&E\langle\langle f^+_1(t)\bar{f}^+_2(t), \alpha \rangle\rangle\\ \notag
&= \langle\langle f_{1,0}\bar{f}_{2,0}, \alpha \rangle\rangle
+E\int^t_0\int_{(\mathbb{T}^d)^2}\int_{\mathbb{R}^2}f_1\bar{f}_2\Big(a(\sqrt{\varepsilon}\xi+1)
\cdot\nabla_x+a(\sqrt{\varepsilon}\zeta+1)\cdot\nabla_y\Big) \alpha d\xi d\zeta dxdyds\\ \notag
 & -\frac{1}{2}E\int^t_0\int_{(\mathbb{T}^d)^2}\int_{\mathbb{R}^2}f_1(s,x,\xi)\partial_{\zeta}\alpha G^2(y,\sqrt{\varepsilon}\zeta+1)d\xi d\nu^{2,\varepsilon,\eta}_{y,s}(\zeta)dxdyds\\ \notag
 & +\frac{1}{2}E\int^t_0\int_{(\mathbb{T}^d)^2}\int_{\mathbb{R}^2}\bar{f}_2(s,y,\zeta)\partial_{\xi}\alpha G^2(x,\sqrt{\varepsilon}\xi+1)d\zeta d\nu^{1,\varepsilon}_{x,s}(\xi)dxdyds\\ \notag
  & -E \int^t_0\int_{(\mathbb{T}^d)^2}\int_{\mathbb{R}^2}G_{1,2}(x,y,\sqrt{\varepsilon}\xi+1,\sqrt{\varepsilon}\zeta+1)\alpha d\nu^{1,\varepsilon}_{x,s}\otimes d\nu^{2,\varepsilon,\eta}_{y,s}(\xi,\zeta)dxdyds\\ \notag
 & +E\int_{(0,t]}\int_{(\mathbb{T}^d)^2}\int_{\mathbb{R}^2}f^{-}_1(s,x,\xi)\partial_{\zeta} \alpha dm^{\varepsilon,\eta}_2(y,\zeta,s)d\xi dx\\ \notag
 & -E\int_{(0,t]}\int_{(\mathbb{T}^d)^2}\int_{\mathbb{R}^2}\bar{f}^+_2(s,y,\zeta)\partial_{\xi} \alpha dm^{\varepsilon}_1(x,\xi,s)d\zeta dy
  +\eta E\int^t_0\int_{(\mathbb{T}^d)^2}\int_{\mathbb{R}^2}f_1\bar{f}_2\Delta_y \alpha d\xi d\zeta dxdyds\\
&=: \langle\langle f_{1,0}\bar{f}_{2,0}, \alpha \rangle\rangle+\sum^7_{i=1}I_i(t),
\end{align*}
where $G^2(x,\xi)=\sum_{k\geq 1}|g_k(x,\xi)|^2$ and {\color{black}$G_{1,2}(x,y,\xi,\zeta)=\sum_{k\geq 1}g_k(x,\xi)g_k(y,\zeta)$}.
Similarly, we have
\begin{align}\notag
&E\langle\langle \bar{f}^+_1(t)f^+_2(t), \alpha \rangle\rangle\\ \notag
&= \langle\langle \bar{f}_{1,0}f_{2,0}, \alpha \rangle\rangle
+E\int^t_0\int_{(\mathbb{T}^d)^2}\int_{\mathbb{R}^2}\bar{f}_1{f}_2\Big(a(\sqrt{\varepsilon}\xi+1)\cdot\nabla_x+a(\sqrt{\varepsilon}\zeta+1)\cdot\nabla_y\Big) \alpha d\xi d\zeta dxdyds\\ \notag
 & +\frac{1}{2}E\int^t_0\int_{(\mathbb{T}^d)^2}\int_{\mathbb{R}^2}\bar{f}_1(s,x,\xi)\partial_{\zeta}\alpha G^2(y,\sqrt{\varepsilon}\zeta+1)d\xi d\nu^{2,\varepsilon,\eta}_{y,s}(\zeta)dxdyds\\ \notag
 & -\frac{1}{2}E\int^t_0\int_{(\mathbb{T}^d)^2}\int_{\mathbb{R}^2}{f}_2(s,y,\zeta)\partial_{\xi}\alpha G^2(x,\sqrt{\varepsilon}\xi+1)d\zeta d\nu^{1,\varepsilon}_{x,s}(\xi)dxdyds\\ \notag
  & + E\int^t_0\int_{(\mathbb{T}^d)^2}\int_{\mathbb{R}^2}G_{1,2}(x,y,\sqrt{\varepsilon}\xi+1,\sqrt{\varepsilon}\zeta+1)\alpha d\nu^{1,\varepsilon}_{x,s}\otimes d\nu^{2,\varepsilon,\eta}_{y,s}(\xi,\zeta)dxdyds\\ \notag
 & -E\int_{(0,t]}\int_{(\mathbb{T}^d)^2}\int_{\mathbb{R}^2}\bar{f}^+_1(s,x,\xi)\partial_{\zeta} \alpha dm^{\varepsilon,\eta}_2(y,\zeta,s)d\xi dx\\ \notag
 & +E\int_{(0,t]}\int_{(\mathbb{T}^d)^2}\int_{\mathbb{R}^2}{f}^{-}_2(s,y,\zeta)\partial_{\xi} \alpha dm^{\varepsilon}_1(x,\xi,s)d\zeta dy
  +\eta E\int^t_0\int_{(\mathbb{T}^d)^2}\int_{\mathbb{R}^2}\bar{f}_1 f_2\Delta_y \alpha d\xi d\zeta dxdyds\\
\label{P-10-1}
&=: \langle\langle \bar{f}_{1,0}{f}_{2,0}, \alpha \rangle\rangle+\sum^7_{i=1}\bar{I}_i(t).
\end{align}
Following the idea developed by \cite{D-V-1}, we can relax the conditions imposed on $\alpha$. Specifically,
we can take $\alpha \in C^{\infty}_b(\mathbb{T}^d_x\times \mathbb{R}_\xi\times \mathbb{T}^d_y\times \mathbb{R}_\zeta)$, which is compactly supported in a neighbourhood of the diagonal
\begin{equation*}
\Big\{(x,\xi,x,\xi); x\in \mathbb{T}^d, \xi\in \mathbb{R}\Big\}.
\end{equation*}
Taking $\alpha=\rho(x-y)\psi(\xi-\zeta)$, then we have the following remarkable identities
\begin{eqnarray}\label{P-11}
(\nabla_x+\nabla_y)\alpha=0, \quad (\partial_{\xi}+\partial_{\zeta})\alpha=0.
\end{eqnarray}
Referring to Proposition 13 in \cite{D-V-1}, we know that $I_5+I_6\leq 0$ and $\bar{I}_5+\bar{I}_6\leq 0$. In view of (\ref{P-11}), we deduce that
\begin{eqnarray*}
  I_1=E\int^t_0\int_{(\mathbb{T}^d)^2}\int_{\mathbb{R}^2}f_1\bar{f}_2\Big(a(\sqrt{\varepsilon}\xi+1)-a(\sqrt{\varepsilon}\zeta+1)\Big) \cdot \nabla_x  \alpha d\xi d\zeta dxdyds.
  \end{eqnarray*}
  and
  \begin{eqnarray*}
  \tilde{I}_1= E\int^t_0\int_{(\mathbb{T}^d)^2}\int_{\mathbb{R}^2}\bar{f}_1f_2\Big(a(\sqrt{\varepsilon}\xi+1)-a(\sqrt{\varepsilon}\zeta+1)\Big)\cdot\nabla_x  \alpha d\xi d\zeta dxdyds.
\end{eqnarray*}
Moreover, {\color{black}using the same method as in the proof of Proposition 13 in \cite{D-V-1}, we have}
\begin{align*}
  \sum^4_{i=2}I_i&=\sum^4_{i=2}\bar{I}_i\\
  &=\frac{1}{2}E\int^t_0\int_{(\mathbb{T}^d)^2}\int_{\mathbb{R}^2}\alpha \sum_{k\geq 1}|g_k(x,\sqrt{\varepsilon}\xi+1)-g_k(y,\sqrt{\varepsilon}\zeta+1)|^2 d\nu^{1,\varepsilon}_{x,s}\otimes\nu^{2,\varepsilon,\eta}_{y,s}(\xi,\zeta)dxdyds.
\end{align*}
Combining all the previous estimates, it follows that
\begin{align}\notag
&E\int_{(\mathbb{T}^d)^2}\int_{\mathbb{R}^2}\rho (x-y)\psi(\xi-\zeta)(f^+_1(x,t,\xi)\bar{f}^+_2(y,t,\zeta)+\bar{f}^+_1(x,t,\xi)f^+_2(y,t,\zeta))d\xi d\zeta dxdy\\
\notag
&\leq  \int_{(\mathbb{T}^d)^2}\int_{\mathbb{R}^2}\rho (x-y)\psi(\xi-\zeta)(f_{1,0}(x,\xi)\bar{f}_{2,0}(y,\zeta)+\bar{f}_{1,0}(x,\xi)f_{2,0}(y,\zeta))d\xi d\zeta dxdy\\ \notag
& +E\int^t_0\int_{(\mathbb{T}^d)^2}\int_{\mathbb{R}^2}(f_1\bar{f}_2+\bar{f}_1f_2)\Big(a(\sqrt{\varepsilon}\xi+1)-a(\sqrt{\varepsilon}\zeta+1)\Big) \cdot\nabla_x  \alpha d\xi d\zeta dxdyds\\ \notag
& +E\int^t_0\int_{(\mathbb{T}^d)^2}\int_{\mathbb{R}^2}\alpha \sum_{k\geq 1}|g_k(x,\sqrt{\varepsilon}\xi+1)-g_k(y,\sqrt{\varepsilon}\zeta+1)|^2 d\nu^{1,\varepsilon}_{x,s}\otimes\nu^{2,\varepsilon,\eta}_{y,s}(\xi,\zeta)dxdyds\\ \notag
& +\eta E\int^t_0\int_{(\mathbb{T}^d)^2}\int_{\mathbb{R}^2}(f_1\bar{f}_2+\bar{f}_1 f_2)\Delta_y \alpha d\xi d\zeta dxdyds\\ \notag
&=: \int_{(\mathbb{T}^d)^2}\int_{\mathbb{R}^2}\rho (x-y)\psi(\xi-\zeta)(f_{1,0}(x,\xi)\bar{f}_{2,0}(y,\zeta)+\bar{f}_{1,0}(x,\xi)f_{2,0}(y,\zeta))d\xi d\zeta dxdy\\
\label{e-9}
& +K_1+K_2+K_3.
\end{align}
Now, taking a sequence $t_n\uparrow t$, since (\ref{e-9}) holds for $f^+(t_n)$. Letting $n\rightarrow \infty$, we deduce that  (\ref{e-9}) holds  for $f^-_i(t)$. Thus, we have
\begin{align}\notag
&E\int_{(\mathbb{T}^d)^2}\int_{\mathbb{R}^2}\rho (x-y)\psi(\xi-\zeta)(f^{\pm}_1(x,t,\xi)\bar{f}^{\pm}_2(y,t,\zeta)+\bar{f}^{\pm}_1(x,t,\xi)f^{\pm}_2(y,t,\zeta))d\xi d\zeta dxdy\\
\notag
&\leq  \int_{(\mathbb{T}^d)^2}\int_{\mathbb{R}^2}\rho (x-y)\psi(\xi-\zeta)(f_{1,0}(x,\xi)\bar{f}_{2,0}(y,\zeta)+\bar{f}_{1,0}(x,\xi)f_{2,0}(y,\zeta))d\xi d\zeta dxdy\\ \label{e-10}
& +K_1+K_2+K_3.
\end{align}
Let $\rho_{\gamma}, \psi_{\delta}$ be approximations to the identity on $\mathbb{T}^d$ and $\mathbb{R}$, respectively. That is, let $\rho\in C^{\infty}(\mathbb{T}^d)$, $\psi\in C^{\infty}_c(\mathbb{R})$ be symmetric non-negative functions such as $\int_{\mathbb{T}^d}\rho =1$, $\int_{\mathbb{R}}\psi =1$ and supp$\psi \subset (-1,1)$. We define
\begin{equation*}
\rho_{\gamma}(x)=\frac{1}{\gamma}\rho\Big(\frac{x}{\gamma}\Big), \quad \psi_{\delta}(\xi)=\frac{1}{\delta}\psi\Big(\frac{\xi}{\delta}\Big).
\end{equation*}
Letting $\rho:=\rho_{\gamma}(x-y)$ and $\psi:=\psi_{\delta}(\xi-\zeta)$ in (\ref{e-10}), we deduce that
\begin{align*}
&E\int_{(\mathbb{T}^d)^2}\int_{\mathbb{R}^2}\rho_{\gamma} (x-y)\psi_{\delta}(\xi-\zeta)(f^{\pm}_1(x,t,\xi)\bar{f}^{\pm}_2(y,t,\zeta)+\bar{f}^{\pm}_1(x,t,\xi)f^{\pm}_2(y,t,\zeta))d\xi d\zeta dxdy\\
&\leq  \int_{(\mathbb{T}^d)^2}\int_{\mathbb{R}^2}\rho_{\gamma} (x-y)\psi_{\delta}(\xi-\zeta)(f_{1,0}(x,\xi)\bar{f}_{2,0}(y,\zeta)+\bar{f}_{1,0}(x,\xi)f_{2,0}(y,\zeta))d\xi d\zeta dxdy\\
&  +\tilde{K}_1(t)+\tilde{K}_2(t)+\tilde{K}_3(t),
\end{align*}
where $\tilde{K}_1, \tilde{K}_2, \tilde{K}_3$ are the corresponding $K_1,K_2,K_3$ in (\ref{e-10}) with $\rho$, $\psi$ replaced by $\rho_{\gamma}$, $\psi_{\delta}$, respectively.

For any $t\in [0,T]$, define the error term
\begin{align}\notag
\mathcal{E}_t(\gamma,\delta)
:=& E\int_{\mathbb{T}^d}\int_{\mathbb{R}}(f^{\pm}_1(x,t,\xi)\bar{f}^{\pm}_2(x,t,\xi)+\bar{f}^{\pm}_1(x,t,\xi)f^{\pm}_2(x,t,\xi))d\xi dx\\
\notag
&-E\int_{(\mathbb{T}^d)^2}\int_{\mathbb{R}^2}(f^{\pm}_1(x,t,\xi)\bar{f}^{\pm}_2(y,t,\zeta)+\bar{f}^{\pm}_1(x,t,\xi){f}^{\pm}_2(y,t,\zeta))\rho_{\gamma}(x-y)\psi_{\delta}(\xi-\zeta)dxdyd\xi d\zeta\\ \notag
&= E\int_{\mathbb{T}^d}\int_{\mathbb{R}}(f^{\pm}_1(x,t,\xi)\bar{f}^{\pm}_2(x,t,\xi)+\bar{f}^{\pm}_1(x,t,\xi)f^{\pm}_2(x,t,\xi))d\xi dx\\ \notag
& -E\int_{(\mathbb{T}^d)^2}\int_{\mathbb{R}}\rho_{\gamma}(x-y)(f^{\pm}_1(x,t,\xi)\bar{f}^{\pm}_2(y,t,\xi)+\bar{f}^{\pm}_1(x,t,\xi)f^{\pm}_2(y,t,\xi))d\xi dxdy\\ \notag
& +E\int_{(\mathbb{T}^d)^2}\int_{\mathbb{R}}\rho_{\gamma}(x-y)(f^{\pm}_1(x,t,\xi)\bar{f}^{\pm}_2(y,t,\xi)+\bar{f}^{\pm}_1(x,t,\xi)f^{\pm}_2(y,t,\xi))d\xi dxdy\\ \notag
& -E\int_{(\mathbb{T}^d)^2}\int_{\mathbb{R}^2}(f^{\pm}_1(x,t,\xi)\bar{f}^{\pm}_2(y,t,\zeta)+\bar{f}^{\pm}_1(x,t,\xi){f}^{\pm}_2(y,t,\zeta))\rho_{\gamma}(x-y)\psi_{\delta}(\xi-\zeta)dxdyd\xi d\zeta\\
\label{qq-3}
&=: H_1+H_2.
\end{align}
By utilizing $\int_{\mathbb{R}}\psi_{\delta}(\xi-\zeta)d\zeta=1$, $\int^{\xi}_{\xi-\delta}\psi_{\delta}(\xi-\zeta)d\zeta=\frac{1}{2}$ and $\int_{(\mathbb{T}^d)^2}\rho_{\gamma}(x-y)dxdy=1$, we deduce that for any $t\in [0,T]$,
\begin{align}\notag
&E\Big|\int_{(\mathbb{T}^d)^2}\int_{\mathbb{R}}\rho_{\gamma}(x-y)f^{\pm}_1(x,t,\xi)\bar{f}^{\pm}_2(y,t,\xi)d\xi dxdy\\ \notag
&-\int_{(\mathbb{T}^d)^2}\int_{\mathbb{R}^2}f^{\pm}_1(x,t,\xi)\bar{f}^{\pm}_2(y,t,\zeta)\rho_{\gamma}(x-y)\psi_{\delta}(\xi-\zeta)dxdyd\xi d\zeta\Big|\\ \notag
&=E\Big|\int_{(\mathbb{T}^d)^2}\rho_{\gamma}(x-y)\int_{\mathbb{R}}I_{v^{\varepsilon, \pm}(x,t)>\xi}\int_{\mathbb{R}}\psi_{\delta}(\xi-\zeta)(I_{v^{\varepsilon,\eta,\pm}(y,t)\leq\xi}-I_{v^{\varepsilon,\eta,\pm}(y,t)\leq \zeta})d\zeta d\xi dxdy\Big|\\ \notag
&\leq E\int_{(\mathbb{T}^d)^2}\int_{\mathbb{R}}\rho_{\gamma}(x-y)I_{v^{\varepsilon,\pm}(x,t)>\xi}\int^{\xi}_{\xi-\delta}\psi_{\delta}(\xi-\zeta)I_{\zeta<v^{\varepsilon,\eta,\pm}(y,t)\leq\xi} d\zeta d\xi dxdy\\ \notag
& +E\int_{(\mathbb{T}^d)^2}\int_{\mathbb{R}}\rho_{\gamma}(x-y)I_{v^{\varepsilon,\pm}(x,t)>\xi}\int^{\xi+\delta}_{\xi}\psi_{\delta}(\xi-\zeta)I_{\xi<v^{\varepsilon,\eta,\pm}(y,t)\leq\zeta} d\zeta d\xi dxdy\\ \notag
&\leq \frac{1}{2}E\int_{(\mathbb{T}^d)^2}\rho_{\gamma}(x-y)\int^{min\{v^{\varepsilon,\pm}(x,t),v^{\varepsilon,\eta,\pm}(y,t)+\delta\}}_{v^{\varepsilon,\eta,\pm}(y,t)}d\xi dxdy +\frac{1}{2}E\int_{(\mathbb{T}^d)^2}\rho_{\gamma}(x-y)\int^{min\{v^{\varepsilon,\pm}(x,s),v^{\varepsilon,\eta,\pm}(y,s)\}}_{v^{\varepsilon,\eta,\pm}(y,s)-\delta}d\xi dxdy\\
\label{e-11}
&\leq  \delta.
\end{align}
Similarly,
\begin{align}\notag
&E\Big|\int_{(\mathbb{T}^d)^2}\int_{\mathbb{R}}\rho_{\gamma}(x-y)\bar{f}^{\pm}_1(x,t,\xi){f}^{\pm}_2(y,t,\xi)d\xi dxdy\\
\label{e-12}
&-\int_{(\mathbb{T}^d)^2}\int_{\mathbb{R}^2}\bar{f}^{\pm}_1(x,t,\xi){f}^{\pm}_2(y,t,\zeta)\rho_{\gamma}(x-y)\psi_{\delta}(\xi-\zeta)dxdyd\xi d\zeta\Big|
\leq  \delta.
\end{align}
 (\ref{e-11}) together with (\ref{e-12}) imply that $H_1\leq 2\delta$.

Moreover, when $\gamma$ is small enough, it follows that
\begin{eqnarray*}\notag
&&E\Big|\int_{(\mathbb{T}^d)^2}\int_{\mathbb{R}}\rho_{\gamma}(x-y)f^{\pm}_1(x,t,\xi)\bar{f}^{\pm}_2(y,t,\xi)d\xi dydx-\int_{\mathbb{T}^d}\int_{\mathbb{R}}f^{\pm}_1(x,t,\xi)\bar{f}^{\pm}_2(x,t,\xi)d\xi dx\Big|\\ \notag
&=&E\Big|\int_{(\mathbb{T}^d)^2}\int_{\mathbb{R}}\rho_{\gamma}(x-y)f^{\pm}_1(x,t,\xi)\bar{f}^{\pm}_2(y,t,\xi)d\xi dydx-\int_{\mathbb{T}^d}\int_{|z|<\gamma}\int_{\mathbb{R}}\rho_{\gamma}(z)f^{\pm}_1(x,t,\xi)\bar{f}^{\pm}_2(x,t,\xi)d\xi dzdx\Big|\\ \notag
&=&E\Big|\int_{(\mathbb{T}^d)^2}\int_{\mathbb{R}}\rho_{\gamma}(x-y)f^{\pm}_1(x,t,\xi)(\bar{f}^{\pm}_2(y,t,\xi)-\bar{f}^{\pm}_2(x,t,\xi))d\xi dydx\Big|\\ \notag
&\leq&\sup_{|z|<\gamma}E\int_{\mathbb{T}^d}\int_{\mathbb{R}}f^{\pm}_1(x,t,\xi)|\bar{f}^{\pm}_2(x-z,t,\xi)-\bar{f}^{\pm}_2(x,t,\xi)|d\xi dx\\ \label{rrr-1}
&\leq& 
 \sup_{|z|<\gamma}E\int_{\mathbb{T}^d}\int_{\mathbb{R}}|\Lambda_{f^{\pm}_2}(x-z,t,\xi)-\Lambda_{f^{\pm}_2}(x,t,\xi)|d\xi dx,
\end{eqnarray*}
where $\Lambda_{f_2}(\cdot,\cdot,\xi):=f_2(\cdot,\cdot,\xi)-I_{0>\xi}$. From (\ref{r-5}), we know that $\Lambda_{f_2}$ is integrable in $L^1(\Omega\times \mathbb{T}^d\times \mathbb{R})$ uniformly with respect to $\varepsilon\in (0,1)$ and $\eta>0$, it follows that
\begin{align}\label{qq-1}
\lim_{\gamma\rightarrow 0}\sup_{\varepsilon\in(0,1),\eta>0}E\Big|\int_{(\mathbb{T}^d)^2}\int_{\mathbb{R}}\rho_{\gamma}(x-y)f^{\pm}_1(x,t,\xi)\bar{f}^{\pm}_2(y,t,\xi)d\xi dydx-\int_{\mathbb{T}^d}\int_{\mathbb{R}}f^{\pm}_1(x,t,\xi)\bar{f}^{\pm}_2(x,t,\xi)d\xi dx\Big|=0.
\end{align}
Similarly, it holds that
\begin{align}\label{qq-2}
\lim_{\gamma\rightarrow 0}\sup_{\varepsilon\in(0,1),\eta>0}E\Big|\int_{(\mathbb{T}^d)^2}\int_{\mathbb{R}}\rho_{\gamma}(x-y)\bar{f}^{\pm}_1(x,t,\xi)f^{\pm}_2(y,t,\xi)d\xi dxdy-\int_{\mathbb{T}^d}\int_{\mathbb{R}}\bar{f}^{\pm}_1(x,t,\xi)f^{\pm}_2(x,t,\xi)d\xi dx\Big|=0.
\end{align}
Therefore, we conclude that
for any $t\in [0,T]$,
\begin{align}\label{qq-5}
\lim_{\gamma,\delta\rightarrow0}\sup_{\varepsilon\in(0,1),\eta>0}|\mathcal{E}_t(\gamma,\delta)|=0.
\end{align}
By the dominate convergence theorem, we have
\begin{align}\label{qq-4-1}
\lim_{\gamma,\delta\rightarrow0}\sup_{\varepsilon\in(0,1),\eta>0}\int_0^T|\mathcal{E}_t(\gamma,\delta)|dt=0.
\end{align}

In the following, we aim to make estimates of $\tilde{K}_1(t)$, $\tilde{K}_2(t)$ and $\tilde{K}_3(t)$.
We begin with the estimation of $\tilde{K}_1(t)$. By Hypothesis H, we deduce that
\begin{align*}
  \tilde{K}_1(t)&\leq \sqrt{\varepsilon} E\int^t_0\int_{(\mathbb{T}^d)^2}\int_{\mathbb{R}^2}(f_1\bar{f}_2+\bar{f}_1f_2)\
  \Gamma(\sqrt{\varepsilon}\xi+1,\sqrt{\varepsilon}\zeta+1)|\xi-\zeta| |\nabla_x \rho_{\gamma}(x-y)|\psi_{\delta}(\xi-\zeta) d\xi d\zeta dxdyds\notag\\
  &= \sqrt{\varepsilon}E\int^t_0\int_{(\mathbb{T}^d)^2}|\nabla_x \rho_{\gamma}(x-y)|\int_{\mathbb{R}^2}l(\xi,\zeta) d\nu^{1,\varepsilon}_{x,s}\otimes d\nu^{2}_{y,s}(\xi,\zeta) dxdyds.\notag
\end{align*}
where
\begin{align*}
l(\xi, \zeta)=\int^{\infty}_{\zeta}\int^{\xi}_{-\infty}\Gamma(\sqrt{\varepsilon}\xi'+1,\sqrt{\varepsilon}\zeta'+1)|\xi'-\zeta'|\psi_{\delta}(\xi'-\zeta')d\xi'd\zeta'.
\end{align*}
Let $\xi''=\xi'-\zeta'$, by Hypothesis H, it follows that
\begin{align*}
l(\xi, \zeta)&\leq \int^{\infty}_{\zeta}\left(\int_{\{|\xi''|<\delta,\xi''<\xi-\zeta' \}}\Gamma(\sqrt{\varepsilon}(\xi''+\zeta')+1,\sqrt{\varepsilon}\zeta'+1)|\xi''|\psi_{\delta}(\xi'')d\xi''\right)d\zeta'\notag\\
&\leq  \delta\int^{\infty}_{\zeta}\left(\int_{\{|\xi''|<\delta,\xi''<\xi-\zeta' \}}(1+|\sqrt{\varepsilon}(\xi''+\zeta')+1|^{p-1}+|\sqrt{\varepsilon}\zeta'+1|^{p-1})\psi_{\delta}(\xi'')d\xi''\right)d\zeta'\notag\\
&\leq \delta \int^{\xi+\delta}_{\zeta}\left((1+|\sqrt{\varepsilon}(\xi+\zeta')+1|^{p-1}+|\sqrt{\varepsilon}\zeta'+1|^{p-1})\right)d\zeta'\notag\\
&\leq \delta \int^{\xi+\delta}_{\zeta}\left(C(p)+|\sqrt{\varepsilon}\delta|^{p-1}+|\sqrt{\varepsilon}\zeta'|^{p-1}+|\sqrt{\varepsilon}\zeta'|^{p-1})\right)d\zeta'\notag\\
&\leq\delta [C(p)+|\sqrt{\varepsilon}\delta|^{p-1}](|\xi|+|\zeta|+\delta)+C(p)\delta(\sqrt{\varepsilon})^{p-1}[|\xi|^p+|\zeta|^p]+C(p)(\sqrt{\varepsilon})^{p-1}\delta^{p+1}\notag\\
&\leq C(p)\delta^2+ C(p)(\sqrt{\varepsilon})^{p-1}\delta^{p+1}+\delta (\sqrt{\varepsilon})^{-1}\Big(C(p)+|\sqrt{\varepsilon}\delta|^{p-1}\Big)\Big(|\sqrt{\varepsilon}\xi|+|\sqrt{\varepsilon}\zeta|\Big)\notag\\
&
+C(p)\delta(\sqrt{\varepsilon})^{-1}\Big(|\sqrt{\varepsilon}\xi|^p+|\sqrt{\varepsilon}\zeta|^p\Big).\notag
\end{align*}
As a consequence, we get
 \begin{align*}\notag
\tilde{K}_1(t)
&\leq C(p)\delta^2\sqrt{\varepsilon}\gamma^{-1}
+ C(p)(\sqrt{\varepsilon})^{p}\gamma^{-1}\delta^{p+1}\notag\\
& +\delta\gamma^{-1}\Big(C(p)+|\sqrt{\varepsilon}\delta|^{p-1}\Big)\Big(E\|\sqrt{\varepsilon}v^{\varepsilon}\|_{L^1([0,T];L^1(\mathbb{T}^d))}+E\|\sqrt{\varepsilon}v^{\varepsilon,\eta}\|_{L^1([0,T];L^1(\mathbb{T}^d))}\Big)
\notag\\
& +C(p)\delta\gamma^{-1}\Big(E\|\sqrt{\varepsilon}v^{\varepsilon}\|^p_{L^p([0,T];L^p(\mathbb{T}^d))}+E\|\sqrt{\varepsilon}v^{\varepsilon,\eta}\|^p_{L^p([0,T];L^p(\mathbb{T}^d))}\Big)\notag\\
&\leq C(p)\delta^2\sqrt{\varepsilon}\gamma^{-1}
+ C(p)(\sqrt{\varepsilon})^{p}\gamma^{-1}\delta^{p+1}
+C(T)\delta\gamma^{-1}\Big(C(p)+|\sqrt{\varepsilon}\delta|^{p-1}\Big),\notag
\end{align*}
where we have used (\ref{r-15})-(\ref{r-16}).
By utilizing Hypothesis H, we have
\begin{align*}
\tilde{K}_2(t)&\leq D_1
E\int^t_0\int_{(\mathbb{T}^d)^2}\int_{\mathbb{R}^2}\rho_{\gamma}(x-y)\psi_{\delta}(\xi-\zeta) [|x-y|^2+|\sqrt{\varepsilon}(\xi-\zeta)|^2] d\nu^{1,\varepsilon}_{x,s}\otimes\nu^{2,\varepsilon,\eta}_{y,s}(\xi,\zeta)dxdyds\notag\\
&\leq D_1t\gamma^2+ D_1t|\sqrt{\varepsilon}\delta|^2\delta^{-1}.\notag
\end{align*}
For the remainder {\color{black}term} $\tilde{K}_3$, it can be estimated as follows
\begin{align*}
 \tilde{K}_3(t)\leq \eta E \int^t_0\int_{(\mathbb{T}^d)^2}\int_{\mathbb{R}^2}(f_1\bar{f}_2+\bar{f}_1f_2)|\Delta_y \rho_{\gamma}(x-y)|\psi_{\delta}(\xi-\zeta) d\xi d\zeta dxdyds\leq  \eta t\gamma^{-2}.
\end{align*}
Based on the above estimates, we  get
\begin{align*}
&E\int_{\mathbb{T}^d}\int_{\mathbb{R}}(f^{\pm}_1(x,t,\xi)\bar{f}^{\pm}_2(x,t,\xi)+\bar{f}^{\pm}_1(x,t,\xi)f^{\pm}_2(x,t,\xi))d\xi dx\notag\\
&=E\int_{(\mathbb{T}^d)^2}\int_{\mathbb{R}^2}\rho_{\gamma} (x-y)\psi_{\delta}(\xi-\zeta)(f^{\pm}_1(x,t,\xi)\bar{f}^{\pm}_2(y,t,\zeta)+\bar{f}^{\pm}_1(x,t,\xi)f^{\pm}_2(y,t,\zeta))d\xi d\zeta dxdy+\mathcal{E}_t(\gamma,\delta)\notag\\
&\leq  \int_{\mathbb{T}^d}\int_{\mathbb{R}}(f_{1,0}(x,\xi)\bar{f}_{2,0}(x,\xi)+\bar{f}_{1,0}(x,\xi)f_{2,0}(x,\xi))d\xi dx+|\mathcal{E}_0(\gamma,\delta)|+\mathcal{E}_t(\gamma,\delta)\notag\\
& +C(p)\delta^2\sqrt{\varepsilon}\gamma^{-1}
+ C(p)(\sqrt{\varepsilon})^{p}\gamma^{-1}\delta^{p+1}
+C(T)\delta\gamma^{-1}\Big(C(p)+|\sqrt{\varepsilon}\delta|^{p-1}\Big)\notag\\
&+D_1t\gamma^2+ D_1t|\sqrt{\varepsilon}\delta|^2\delta^{-1}+\eta t\gamma^{-2}.\notag
\end{align*}
Then
\begin{align*}
&E\int^T_0\int_{\mathbb{T}^d}\int_{\mathbb{R}}(f^{\pm}_1(x,t,\xi)\bar{f}^{\pm}_2(x,t,\xi)+\bar{f}^{\pm}_1(x,t,\xi)f^{\pm}_2(x,t,\xi))d\xi dxdt\notag\\
&\leq  T\int_{\mathbb{T}^d}\int_{\mathbb{R}}(f_{1,0}(x,\xi)\bar{f}_{2,0}(x,\xi)+\bar{f}_{1,0}(x,\xi)f_{2,0}(x,\xi))d\xi dx+T|\mathcal{E}_0(\gamma,\delta)|+\int^T_0\mathcal{E}_t(\gamma,\delta)dt\notag\\
& + C(p)\delta^2\sqrt{\varepsilon}\gamma^{-1}
+ C(p)(\sqrt{\varepsilon})^{p}\gamma^{-1}\delta^{p+1}
+C(T)\delta\gamma^{-1}\Big(C(p)+|\sqrt{\varepsilon}\delta|^{p-1}\Big)\notag\\
& +D_1T^2\gamma^2+ D_1T^2|\sqrt{\varepsilon}\delta|^2\delta^{-1}+T^2\eta\gamma^{-2}.\notag
\end{align*}
Utilizing the following identities
\begin{align}\label{rrr-8}
\int_{\mathbb{R}}I_{v^{\varepsilon}>\xi}\overline{I_{v^{\varepsilon,\eta}>\xi}}d\xi=(v^{\varepsilon}-v^{\varepsilon,\eta})^+,\quad
\int_{\mathbb{R}}\overline{I_{v^{\varepsilon}>\xi}}I_{v^{\varepsilon,\eta}>\xi}d\xi=(v^{\varepsilon}-v^{\varepsilon,\eta})^-,
\end{align}
we get
\begin{align*}
&E\|v^{\varepsilon}-v^{\varepsilon,\eta}\|_{L^1([0,T];L^1(\mathbb{T}^d))}\notag\\
&\leq T|\mathcal{E}_0(\gamma,\delta)|+\int^T_0\mathcal{E}_t(\gamma,\delta)dt
+C(p)\delta^2\sqrt{\varepsilon}\gamma^{-1}
+ C(p)(\sqrt{\varepsilon})^{p}\gamma^{-1}\delta^{p+1}
\notag\\
&+C(T)\delta\gamma^{-1}\Big(C(p)+|\sqrt{\varepsilon}\delta|^{p-1}\Big)+D_1T^2\gamma^2+ D_1T^2|\sqrt{\varepsilon}\delta|^2\delta^{-1}+T^2\eta\gamma^{-2}.\notag
\end{align*}

Recall $p\geq 1$, taking $\delta=\gamma^{\frac{4}{3}}$, $\gamma=\eta^{\frac{1}{3}}$, we have
\begin{align*}
&\sup_{\varepsilon\in(0,1)} E\Big\|\frac{u^{\varepsilon}-\bar{u}}{\sqrt{\varepsilon}}-\frac{u^{\varepsilon,\eta}-\bar{u}^{\eta}}{\sqrt{\varepsilon}}\Big\|_{L^1([0,T];L^1(\mathbb{T}^d))}\\
&\leq  T|\mathcal{E}_0(\gamma,\delta)|+\int^T_0\mathcal{E}_t(\gamma,\delta)dt
+C(p,T)\eta^{\frac{1}{9}}
+D_1T^2\eta^{\frac{2}{3}}+ D_1T^2\eta^{\frac{4}{9}}+T^2\eta^{\frac{1}{3}}.
\end{align*}
Thus, by (\ref{qq-5}) and (\ref{qq-4-1}), we have
\begin{eqnarray*}
\lim_{\eta\rightarrow 0}\sup_{\varepsilon\in(0,1)}E\Big\|\frac{u^{\varepsilon}-\bar{u}}{\sqrt{\varepsilon}}-\frac{u^{\varepsilon,\eta}-\bar{u}^{\eta}}{\sqrt{\varepsilon}}\Big\|_{L^1([0,T];L^1(\mathbb{T}^d))}= 0,
\end{eqnarray*}
which is the desiblack result.
\end{proof}

%

Now, we aim to make estimates of the second term of the right hand side of (\ref{3terms}). That is,
\begin{prp}\label{prp-4}
For any fixed $\eta>0$, we have
\begin{equation*}
\lim_{\varepsilon\rightarrow0}\sup_{t\in[0,T]}E\Big\|\frac{u^{\varepsilon,\eta}(t)-\bar{u}^{\eta}(t)}{\sqrt{\varepsilon}}-\bar{u}^{1,\eta}(t)\Big\|_{H}^2=0.
\end{equation*}
\end{prp}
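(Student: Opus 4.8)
The plan is to prove the claim by an energy estimate for the difference $R^{\varepsilon,\eta}:=v^{\varepsilon,\eta}-\bar{u}^{1,\eta}$, where $v^{\varepsilon,\eta}=\frac{u^{\varepsilon,\eta}-\bar{u}^{\eta}}{\sqrt{\varepsilon}}$, showing that $\sup_{t\in[0,T]}E\|R^{\varepsilon,\eta}(t)\|_H^2\to0$ as $\varepsilon\to0$ for each fixed $\eta>0$. The crucial structural point is that, unlike the original equations, both $v^{\varepsilon,\eta}$ (solving (\ref{vepe})) and $\bar{u}^{1,\eta}$ (solving (\ref{rrr-2})) carry the common viscosity $\eta>0$; they are therefore regular (strong/variational) solutions in $L^2(\Omega;C([0,T];H))\cap L^2(\Omega\times[0,T];H^1(\mathbb{T}^d))$, which is exactly what legitimizes It\^o's formula for $\|R^{\varepsilon,\eta}\|_H^2$. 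This is the reason the viscous approximations were introduced in the first place.

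First I would derive the equation for $R^{\varepsilon,\eta}$. Recalling $\bar{u}\equiv1$, $a(\bar{u})=a(1)$ and $\sqrt{\varepsilon}v^{\varepsilon,\eta}+1=u^{\varepsilon,\eta}$, a Taylor expansion of the $C^2$ flux $A$ around $1$ yields the exact identity
\begin{equation*}
\frac{1}{\sqrt{\varepsilon}}\big(A(\sqrt{\varepsilon}v^{\varepsilon,\eta}+1)-A(1)\big)=a(1)v^{\varepsilon,\eta}+F^{\varepsilon,\eta},\qquad F^{\varepsilon,\eta}:=\sqrt{\varepsilon}\,(v^{\varepsilon,\eta})^2\int_0^1(1-s)A''(1+s\sqrt{\varepsilon}v^{\varepsilon,\eta})\,ds.
\end{equation*}
Subtracting (\ref{rrr-2}) from (\ref{vepe}), the linear parts of the two fluxes combine into $a(1)R^{\varepsilon,\eta}$, so that
\begin{equation*}
dR^{\varepsilon,\eta}=-{\rm div}\big(a(1)R^{\varepsilon,\eta}\big)\,dt-{\rm div}\big(F^{\varepsilon,\eta}\big)\,dt+\eta\Delta R^{\varepsilon,\eta}\,dt+\big(\Phi(u^{\varepsilon,\eta})-\Phi(1)\big)\,dW(t),
\end{equation*}
with $R^{\varepsilon,\eta}(0)=0$ (both processes start from the same datum).

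Next I would apply It\^o's formula to $\|R^{\varepsilon,\eta}\|_H^2$ and take expectations, the martingale part vanishing. Three facts drive the estimate: the transport term vanishes, since $(R^{\varepsilon,\eta},a(1)\cdot\nabla R^{\varepsilon,\eta})=\frac12\int_{\mathbb{T}^d}a(1)\cdot\nabla(R^{\varepsilon,\eta})^2\,dx=0$ by periodicity and the constancy of $a(1)$; the viscous term gives the good dissipation $-2\eta\|\nabla R^{\varepsilon,\eta}\|_H^2$; and, by (\ref{equ-29}) with coinciding spatial points, the It\^o correction is controlled as $\sum_{k\ge1}\|g_k(\cdot,u^{\varepsilon,\eta})-g_k(\cdot,1)\|_{L^2(\mathbb{T}^d)}^2\le D_1\|u^{\varepsilon,\eta}-1\|_H^2=D_1\varepsilon\|v^{\varepsilon,\eta}\|_H^2$. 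The only remaining term, arising from $-2(R^{\varepsilon,\eta},{\rm div}\,F^{\varepsilon,\eta})$, is $2\int_{\mathbb{T}^d}\nabla R^{\varepsilon,\eta}\cdot F^{\varepsilon,\eta}\,dx$ after integration by parts, which I would absorb into the dissipation by Young's inequality, $2\int_{\mathbb{T}^d}\nabla R^{\varepsilon,\eta}\cdot F^{\varepsilon,\eta}\,dx\le 2\eta\|\nabla R^{\varepsilon,\eta}\|_H^2+\frac{1}{2\eta}\|F^{\varepsilon,\eta}\|_H^2$. This leaves
\begin{equation*}
\frac{d}{dt}E\|R^{\varepsilon,\eta}\|_H^2\le \frac{1}{2\eta}E\|F^{\varepsilon,\eta}\|_H^2+D_1\varepsilon\,E\|v^{\varepsilon,\eta}\|_H^2.
\end{equation*}
Here keeping $\eta>0$ fixed is essential: the blow-up factor $\frac{1}{2\eta}$ is harmless because only $\varepsilon\to0$ is taken.

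The main work, and the principal obstacle, is to show the right-hand side vanishes after integration in time, which is where the superlinear flux must be reconciled with the available moments. By Hypothesis H, differentiating (\ref{qeq-22}) gives $|A''(\zeta)|\lesssim 1+|\zeta|^{p-1}$, whence $|F^{\varepsilon,\eta}|\lesssim \sqrt{\varepsilon}\,(v^{\varepsilon,\eta})^2\big(1+|\sqrt{\varepsilon}v^{\varepsilon,\eta}|^{p-1}\big)$ and so $|F^{\varepsilon,\eta}|^2\lesssim \varepsilon|v^{\varepsilon,\eta}|^4+\varepsilon^{p}|v^{\varepsilon,\eta}|^{2p+2}$. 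Integrating the differential inequality from $R^{\varepsilon,\eta}(0)=0$ gives
\begin{equation*}
\sup_{t\in[0,T]}E\|R^{\varepsilon,\eta}(t)\|_H^2\le \frac{C}{\eta}\Big(\varepsilon\,E\|v^{\varepsilon,\eta}\|_{L^4([0,T];L^4)}^4+\varepsilon^{p}E\|v^{\varepsilon,\eta}\|_{L^{2p+2}([0,T];L^{2p+2})}^{2p+2}\Big)+D_1\varepsilon\int_0^T E\|v^{\varepsilon,\eta}\|_H^2\,dt.
\end{equation*}
By Lemma \ref{lem-3}, the space-time $L^q$ norms of $v^{\varepsilon,\eta}$ for $q=2,4,2p+2$ are bounded uniformly in $\varepsilon\in(0,1)$ and $\eta>0$; since $p\ge1$, the whole right-hand side is of order $\eta^{-1}(\varepsilon+\varepsilon^{p})+\varepsilon$, which tends to $0$ as $\varepsilon\to0$ for each fixed $\eta$, proving the claim. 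The delicate feature throughout is that it is precisely the \emph{quadratic} (rather than merely Lipschitz) nature of the flux remainder $F^{\varepsilon,\eta}$ that supplies the decisive gain $\sqrt{\varepsilon}$, and only the arbitrarily high uniform moments furnished by Lemma \ref{lem-3} allow the polynomial growth of $A''$ to be accommodated.
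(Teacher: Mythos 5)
Your proposal is correct and follows essentially the same route as the paper's proof: an It\^o energy estimate for $w_1^{\varepsilon,\eta}=v^{\varepsilon,\eta}-\bar{u}^{1,\eta}$, a Taylor expansion of the flux around $1$ controlled by the polynomial growth of $a'$, Young's inequality to absorb the nonlinear remainder into the viscous dissipation (with the harmless $\eta^{-1}$ constant), and the uniform moments of Lemma \ref{lem-3} to conclude a rate of order $\varepsilon+\varepsilon^{p}$. The only (cosmetic) difference is that you exploit the exact cancellation $(R^{\varepsilon,\eta},a(1)\cdot\nabla R^{\varepsilon,\eta})=0$ of the constant-coefficient transport term, whereas the paper keeps the linear term $a(1)|w_1^{\varepsilon,\eta}|$ in the flux bound and closes the estimate with Gronwall's inequality.
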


%
%
%
\begin{proof}
Fix $\eta>0$. Denote by $w_1^{\varepsilon,\eta}:=\frac{u^{\varepsilon,\eta}-\bar{u}^{\eta}}{\sqrt{\varepsilon}}-\bar{u}^{1,\eta}
=v^{\varepsilon,\eta}-\bar{u}^{1,\eta}$, from  (\ref{vepe}) and (\ref{rrr-2}), we get
\begin{equation*}
dw_1^{\varepsilon,\eta}+ {\rm{div}} \left(\frac{A(\sqrt{\varepsilon}v^{\varepsilon,\eta}+1)-A(1)}{\sqrt{\varepsilon}}-a(\bar{u})\bar{u}^{1,\eta}\right)dt
=\eta\Delta w_1^{\varepsilon,\eta}dt+(\Phi(\sqrt{\varepsilon}v^{\varepsilon,\eta}+1)-\Phi(\bar{u}))dW(t),
\end{equation*}
with $w_1^{\varepsilon,\eta}(0)=0$.

Applying It\^o formula and (\ref{equ-29}), we obtain
\begin{align*}
&E\|w_1^{\varepsilon,\eta}(t)\|^2_{H}+2\eta E\int_0^t\|\nabla  w_1^{\varepsilon,\eta}(s)\|_{H}^2ds
= -2E\int^t_0\left(\nabla w_1^{\varepsilon,\eta},\frac{A(\sqrt{\varepsilon}v^{\varepsilon,\eta}+1)-A(1)}{\sqrt{\varepsilon}}-a(\bar{u})\bar{u}^{1,\eta}\right)
\\
&+  \sum_{k\geq 1}E\int^t_0\int_{\mathbb{T}^d}|g_k(x,\sqrt{\varepsilon}v^{\varepsilon,\eta}+1)-g_k(x,\bar{u})|^2dxds
=:  J_1+J_2.
\end{align*}
By Hypothesis H, we reach
\begin{align*} &\left|\frac{A(\sqrt{\varepsilon}v^{\varepsilon,\eta}+1)-A(1)}{\sqrt{\varepsilon}}
-a(\bar{u})\bar{u}^{1,\eta}\right|\\
\leq & (\sqrt{\varepsilon})^p|v^{\varepsilon,\eta}|^{p+1}
+C(p)(\sqrt{\varepsilon})^{p-1}|v^{\varepsilon,\eta}|^{p}+\dots+\sqrt{\varepsilon}|v^{\varepsilon,\eta}|^2
+|a(1)v^{\varepsilon,\eta}-a(1)\bar{u}^{1,\eta}|\\
\leq&  (\sqrt{\varepsilon})^p|v^{\varepsilon,\eta}|^{p+1}
+C(p)(\sqrt{\varepsilon})^{p-1}|v^{\varepsilon,\eta}|^{p}+\dots+a(1)|w_1^{\varepsilon,\eta}|.
\end{align*}
Then, by using Young inequality, it gives
\begin{align*}
  J_1\leq & \eta E\int_0^t\|\nabla  w_1^{\varepsilon,\eta}(s)\|_{H}^2ds
 +C(p,\eta)\varepsilon^p E\int^t_0 \|v^{\varepsilon,\eta}\|^{2(p+1)}_{L^{2(p+1)}(\mathbb{T}^d)}ds\\
+& C(p,\eta)\varepsilon^{p-1}E\int^t_0 \|v^{\varepsilon,\eta}\|^{2p}_{L^{2p}(\mathbb{T}^d)}ds+\dots+\varepsilon E\int^t_0 \|v^{\varepsilon,\eta}\|^{4}_{L^{4}(\mathbb{T}^d)}ds
+C(p,\eta)a(1)E\int^t_0 \|w_1^{\varepsilon,\eta}\|^2_Hds.
\end{align*}
Owing to (\ref{r-5}), we reach
\begin{align*}
  J_1\leq & \eta E\int_0^t\|\nabla  w_1^{\varepsilon,\eta}(s)\|_{H}^2ds
 +C(p,T,\eta)\Big(\varepsilon^p+\varepsilon^{p-1}+\dots+\varepsilon\Big) \\
+& C(p,\eta)a(1)E\int^t_0 \|w_1^{\varepsilon,\eta}\|^2_Hds.
\end{align*}
By Hypothesis H and  (\ref{r-5}), it gives
\begin{align*}
  J_2\leq D_1\varepsilon E\int^t_0\|v^{\varepsilon,\eta}\|^2_Hds\leq C(D_1,T)\varepsilon.
\end{align*}
As a result, it follows that
\begin{align*}
&E\|w_1^{\varepsilon,\eta}(t)\|^2_{H}+\eta E\int_0^t\|\nabla  w_1^{\varepsilon,\eta}(s)\|_{H}^2ds\\
\leq& C(p,T,\eta)\Big(\varepsilon^p+\varepsilon^{p-1}+\dots+\varepsilon\Big)
+ C(p,\eta)a(1)E\int^t_0 \|w_1^{\varepsilon,\eta}\|^2_Hds
+ C(D_1,T)\varepsilon.
\end{align*}
By Gronwall inequality, we get for any $\eta>0$,
\begin{align*}
&\sup_{t\in [0,T]}E\|w_1^{\varepsilon,\eta}(t)\|^2_{H}\leq C(p,T,\eta,D_1)\varepsilon.
\end{align*}

\end{proof}

Based on all the previous estimates, we are able to proceed with the proof of a central limit theorem. It reads as follows.
\begin{thm}{\rm{(Central Limit Theorem)}}
Assume that Hypothesis H is in force, then
\begin{eqnarray*}
\lim_{\varepsilon\rightarrow 0}E\Big\|\frac{u^{\varepsilon}-\bar{u}}{\sqrt{\varepsilon}}-\bar{u}^1\Big\|_{L^1([0,T];L^1(\mathbb{T}^d))}=0.
\end{eqnarray*}

\end{thm}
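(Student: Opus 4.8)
The plan is to assemble the three ingredients already established into the triangle-inequality decomposition (\ref{3terms}) and then to run a careful two-parameter limiting argument in which $\eta$ is sent to zero \emph{before} $\varepsilon$. For every $\varepsilon\in(0,1)$ and $\eta>0$ I would start from
\[
E\Big\|\frac{u^{\varepsilon}-\bar{u}}{\sqrt{\varepsilon}}-\bar{u}^1\Big\|_{L^1([0,T];L^1(\mathbb{T}^d))}\leq A^{\varepsilon,\eta}+B^{\varepsilon,\eta}+C^{\eta},
\]
where $A^{\varepsilon,\eta}$, $B^{\varepsilon,\eta}$, $C^{\eta}$ denote the three terms on the right-hand side of (\ref{3terms}). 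The first term is controlled by Proposition \ref{prp-2}, which yields $\lim_{\eta\to 0}\sup_{\varepsilon\in(0,1)}A^{\varepsilon,\eta}=0$; the third term is controlled by (\ref{r-3}), i.e. $\lim_{\eta\to 0}C^{\eta}=0$; and the second term is controlled by Proposition \ref{prp-4}, which for each fixed $\eta$ provides smallness of $B^{\varepsilon,\eta}$ as $\varepsilon\to 0$.

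Before combining these, I would reconcile the norm in Proposition \ref{prp-4}, which is stated in $H=L^2(\mathbb{T}^d)$, with the target norm $L^1([0,T];L^1(\mathbb{T}^d))$. Since the torus has unit measure, the embedding $L^2(\mathbb{T}^d)\hookrightarrow L^1(\mathbb{T}^d)$ together with the Cauchy--Schwarz inequality (in $x$ and in $\omega$) and Tonelli's theorem gives, with $w_1^{\varepsilon,\eta}=v^{\varepsilon,\eta}-\bar{u}^{1,\eta}$,
\[
B^{\varepsilon,\eta}=E\int_0^T\big\|w_1^{\varepsilon,\eta}(t)\big\|_{L^1(\mathbb{T}^d)}\,dt
\leq \int_0^T\Big(E\big\|w_1^{\varepsilon,\eta}(t)\big\|_{H}^2\Big)^{1/2}dt
\leq T\,\sup_{t\in[0,T]}\Big(E\big\|w_1^{\varepsilon,\eta}(t)\big\|_{H}^2\Big)^{1/2}.
\]
By Proposition \ref{prp-4} the right-hand side tends to $0$ as $\varepsilon\to 0$ for each fixed $\eta$.

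With these three estimates in hand I would conclude by the $\eta$-then-$\varepsilon$ argument. Given $\sigma>0$, I would first use $\lim_{\eta\to0}\sup_{\varepsilon}A^{\varepsilon,\eta}=0$ and $\lim_{\eta\to0}C^{\eta}=0$ to fix a single $\eta_0>0$ with $\sup_{\varepsilon\in(0,1)}A^{\varepsilon,\eta_0}+C^{\eta_0}<\sigma/2$. Keeping this $\eta_0$ frozen, I would then invoke Proposition \ref{prp-4} to choose $\varepsilon_0>0$ so small that $B^{\varepsilon,\eta_0}<\sigma/2$ for all $\varepsilon<\varepsilon_0$; summing the three bounds gives the left-hand side $<\sigma$ whenever $\varepsilon<\varepsilon_0$, and since $\sigma>0$ is arbitrary the claimed limit follows.

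The crux, and the very reason the auxiliary viscous processes were introduced, is precisely this non-interchangeable order of limits: $A^{\varepsilon,\eta}$ and $C^{\eta}$ are small only as $\eta\to0$, whereas $B^{\varepsilon,\eta}$ is small only as $\varepsilon\to0$ for $\eta$ held fixed. The whole scheme hinges on the uniformity in $\varepsilon$ supplied by Proposition \ref{prp-2} (the $\sup_{\varepsilon\in(0,1)}$ there), which allows $\eta_0$ to be selected once and for all independently of $\varepsilon$; without that uniformity the two limits could not be decoupled. In this sense the genuine difficulty has already been absorbed into Propositions \ref{prp-2} and \ref{prp-4}, and the present theorem is their clean synthesis.
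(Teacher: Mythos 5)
Your proposal is correct and follows essentially the same route as the paper's own proof: the decomposition (\ref{3terms}), uniformity in $\varepsilon$ from Proposition \ref{prp-2} plus (\ref{r-3}) to fix $\eta_0$ first, and then Proposition \ref{prp-4} at that frozen $\eta_0$ to send $\varepsilon\to 0$. Your explicit Cauchy--Schwarz reconciliation of the $H$-norm in Proposition \ref{prp-4} with the target $L^1([0,T];L^1(\mathbb{T}^d))$ norm is a point the paper passes over silently (it is the source of the factor $T$ in the paper's bound $\frac{\delta}{3}T$), so your write-up is if anything slightly more complete.
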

\begin{proof}
Recall that for any $\eta>0$, we have
\begin{align}\notag
&E\Big\|\frac{u^{\varepsilon}-\bar{u}}{\sqrt{\varepsilon}}-\bar{u}^1\Big\|_{L^1([0,T];L^1(\mathbb{T}^d))}\\ \notag
\leq &E\Big\|\frac{u^{\varepsilon}-\bar{u}}{\sqrt{\varepsilon}}-\frac{u^{\varepsilon,\eta}-\bar{u}^{\eta}}{\sqrt{\varepsilon}}\Big\|_{L^1([0,T];L^1(\mathbb{T}^d))}
+E\Big\|\frac{u^{\varepsilon,\eta}-\bar{u}^{\eta}}{\sqrt{\varepsilon}}-\bar{u}^{1,\eta}\Big\|_{L^1([0,T];L^1(\mathbb{T}^d))}\\
\label{3terms-1}
+&E\|\bar{u}^{1,\eta}-\bar{u}^1\|_{L^1([0,T];L^1(\mathbb{T}^d))}.
\end{align}
From Proposition \ref{prp-2} and (\ref{r-3}), we know that for any $\delta>0$, we can choose $\eta_0>0$ small enough, such that for all $\varepsilon>0$,
\begin{equation*}
E\Big\|\frac{u^{\varepsilon}-\bar{u}}{\sqrt{\varepsilon}}-\frac{u^{\varepsilon,\eta_0}-\bar{u}^{\eta_0}}{\sqrt{\varepsilon}}\Big\|_{L^1([0,T];L^1(\mathbb{T}^d))}<\frac{\delta}{3},\quad E\|\bar{u}^{1,\eta_0}-\bar{u}^1\|_{L^1([0,T];L^1(\mathbb{T}^d))}<\frac{\delta}{3}.
\end{equation*}
Letting $\eta=\eta_0$, we deduce from (\ref{3terms-1}) that
\begin{align}\label{3terms-2}
E\Big\|\frac{u^{\varepsilon}-\bar{u}}{\sqrt{\varepsilon}}-\bar{u}^1\Big\|_{L^1([0,T];L^1(\mathbb{T}^d))}
\leq \frac{2}{3}\delta+ E\Big\|\frac{u^{\varepsilon,\eta_0}-\bar{u}^{\eta_0}}{\sqrt{\varepsilon}}-\bar{u}^{1,\eta_0}\Big\|_{L^1([0,T];L^1(\mathbb{T}^d))}.
\end{align}
By Proposition \ref{prp-4}, for the above $\delta>0$, there exists small enough $\varepsilon_0>0$ such that for any $\varepsilon\leq \varepsilon_0$,
\begin{equation*}
E\Big\|\frac{u^{\varepsilon,\eta_0}-\bar{u}^{\eta_0}}{\sqrt{\varepsilon}}-\bar{u}^{1,\eta_0}\Big\|_{L^1([0,T];L^1(\mathbb{T}^d))}
<{\color{black}\frac{\delta}{3}T}.
\end{equation*}
Thus, for any $\varepsilon\leq \varepsilon_0$, we deduce from (\ref{3terms-2}) that
\begin{equation*}
E\Big\|\frac{u^{\varepsilon}-\bar{u}}{\sqrt{\varepsilon}}-\bar{u}^1\Big\|_{L^1([0,T];L^1(\mathbb{T}^d))}<\frac{2\delta}{3}+\frac{\delta}{3}T.
\end{equation*}
By the arbitrary of $\delta$, we conclude the result.

\end{proof}

\section{Moderate deviation principle}\label{section4}
In this section, we will prove that
{\color{black}\begin{align*}
X^{\varepsilon}:=\frac{u^{\varepsilon}-\bar{u}}{\sqrt{\varepsilon}\lambda(\varepsilon)}
\end{align*}}
{\color{black}obeys} an LDP on $L^1([0,T];L^1(\mathbb{T}^d))$ with $\lambda(\varepsilon)$ satisfying (\ref{e-43}), which is called moderate deviation principle.

\subsection{A sufficient condition for Large Deviation Principle}
We first introduce some notations and recall a general criteria for  large deviation principle given by \cite{MSZ}. Denote by $\mathcal{E}$ a Polish space with metric $d$, and $\mathcal{B}(\mathcal{E})$ is the Borel $\sigma$-algebra produced by the metric $d$.

\begin{dfn}
(Rate function) A function $I: \mathcal{E}\rightarrow [0,\infty]$ is called a rate function if $I$ is lower semicontinuous.
\end{dfn}

\begin{dfn}
(Large Deviation Principle) Let $I$ be a rate function on $\mathcal{E}$. A family $\{X^{\varepsilon}\}$ of

\noindent${\color{black}\mathcal{E}}$-valued random elements is said to satisfy the large deviation principle on $\mathcal{E}$ with rate function $I$, if the following two conditions hold.
\begin{description}
  \item[(i)] (Upper bound) For each closed subset $F$ of $\mathcal{E}$,
\begin{equation*}
\limsup_{\varepsilon\rightarrow0}\varepsilon \log P(X^{\varepsilon}\in F)\leq-\inf_{x\in F}I(x).
\end{equation*}
  \item[(ii)] (Lower bound) For each open subset $G$ of $\mathcal{E}$,
\begin{equation*}
\liminf_{\varepsilon\rightarrow0}\varepsilon \log P(X^{\varepsilon}\in G)\geq-\inf_{x\in G}I(x).
\end{equation*}
\end{description}
\end{dfn}
Assume that $W$ is a cylindrical Wiener process on a Hilbert space $U$ defined on $(\Omega, \mathcal{F},\{\mathcal{F}_t\}_{t\in [0,T]}, P)$ (that is, the paths of $W$ take values in $C([0,T];\mathcal{U})$, where $\mathcal{U}$ is another Hilbert space such that the embedding $U\subset \mathcal{U}$ is Hilbert-Schmidt).
The Cameron-Martin space of the Wiener process $\{W(t),t\in[0,T]\}$ is given by
\begin{equation*}
\mathcal{H}_0:=\Big\{h:[0,T]\rightarrow U; h\ {\rm{is\ absolutely\ contimuous\ and\ }} \int_0^T\|\dot{h}(s)\|_{U}^2ds<\infty\Big\}.
\end{equation*}
The space $\mathcal{H}_0$ is a Hilbert space with {\color{black}the} inner product
\begin{align*}
<h_1,h_2>_{\mathcal{H}_0}:=\int_0^T(\dot{h}_1(s),\dot{h}_2(s))_Uds,\ \ {\color{black}{\text{for\ }} h_1,h_2\in\mathcal{H}_0.}
\end{align*}
Denote by  $\mathcal{A}$ the class of $\{\mathcal{F}_t\}-$pblackictable processes $\phi$ belonging to $\mathcal{H}_0$, $P-$a.s.
{\color{black}For $N<\infty$, let
\begin{align*}
S_N:=\Big\{h\in\mathcal{H}_0;\int_0^T\|\dot{h}(s)\|^2_{U}ds\leq N\Big\}.
\end{align*}
}
Here and in the sequel of this paper, we will always refer to the weak topology on the set $S_N$ {\color{black}for which} $S_N$ is a Polish space.
Define
{\color{black}
\begin{align*}
\mathcal{A}_N:=\Big\{\phi\in\mathcal{A};\phi(\omega)\in S_N, {\rm{P-}}a.s.\Big\}.
\end{align*}
}
Recently, a new sufficient condition \big(conditions (a) and (b) in the following Theorem \ref{LDP}\big) {\color{black}implying} the large deviation principle is proposed by Matoussi, Sabagh and Zhang in \cite{MSZ}. It turns out {\color{black}that} this new sufficient condition is suitable for establishing the large deviation principle for  stochastic conservation laws. Combining Budhiraja et al. \cite{BD} and \cite{MSZ}, it follows that
\begin{thm}\label{LDP}
For $\varepsilon>0$, let $\Gamma^{\varepsilon}$ be a measurable mapping from $C([0,T];\mathcal{U}))$ into $\mathcal{E}$. {\color{black}Let

\noindent$X^{\varepsilon}:=\Gamma^{\varepsilon}(W(\cdot))$.} Suppose that $\{\Gamma^{\varepsilon}\}_{\varepsilon>0}$ satisfies the following assumptions: there exists a measurable map $\Gamma^0:C([0,T];\mathcal{U})\rightarrow\mathcal{E}$ such that
\begin{description}
  \item[(a)] for every $N<\infty$ and any family $\{h^{\varepsilon}; \varepsilon>0\}\subset\mathcal{A}_N$, and for any $\delta>0$,
\begin{equation*}
\lim_{\varepsilon\rightarrow0}P(d(Y^{\varepsilon},Z^{\varepsilon})>\delta)=0,
\end{equation*}
where $Y^{\varepsilon}:=\Gamma^{\varepsilon}(W(\cdot)+\frac{1}{\sqrt{\varepsilon}}\int_0^{\cdot}\dot{h}^{\varepsilon}(s)ds), Z^{\varepsilon}:=\Gamma^0(\int_0^{\cdot}\dot{h}^{\varepsilon}(s)ds)$.
  \item[(b)] for every $N<\infty$, the family $\{h_m\}_{m\geq 1}\subset S_N$ that converges to some element $h$ as $m\rightarrow\infty$, $\Gamma^0(\int_0^{\cdot}\dot{h}_m(s)ds)$ converges to $\Gamma^0(\int_0^{\cdot}\dot{h}(s)ds)$ in the space $\mathcal{E}$.
\end{description}
Then the family $\{X^{\varepsilon}\}_{\varepsilon>0}$ satisfies a large deviation principle in $\mathcal{E}$ with the rate function $I$ given by
\begin{equation}\label{rf}
I(g):=\inf_{\big\{h\in\mathcal{H}_0;g=\Gamma^0(\int_0^{\cdot}\dot{h}(s)ds)\big\}}\Big\{\frac{1}{2}\int_0^T\|\dot{h}(s)\|^2_{U}ds\Big\},\ \ g\in\mathcal{E},
\end{equation}
with the convention $\inf\{\emptyset\}=\infty$.
\end{thm}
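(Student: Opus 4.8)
The plan is to deduce the large deviation principle from the equivalent Laplace principle, via the Budhiraja--Dupuis variational representation for functionals of the cylindrical Wiener process combined with the weak convergence conditions (a) and (b). Since $\mathcal{E}$ is Polish, it suffices to check that $I$ in (\ref{rf}) is a good rate function and that, for every $g\in C_b(\mathcal{E})$, the Laplace principle
\[
\lim_{\varepsilon\to 0}\Big(-\varepsilon\log E\big[\exp(-\varepsilon^{-1}g(X^\varepsilon))\big]\Big)=\inf_{f\in\mathcal{E}}\{g(f)+I(f)\}
\]
holds. To see that $I$ is good, I would take a sequence in a sublevel set $\{I\le M\}$, lift each element to a near-optimal control lying in $S_{2M+1}$, use the weak compactness of $S_{2M+1}$ in $\mathcal{H}_0$ to extract a weakly convergent subsequence $h^n\to h$, and then invoke condition (b) to identify the corresponding limit as $\Gamma^0(\int_0^\cdot\dot h\,ds)$, whose energy is $\le M$ by weak lower semicontinuity of $h\mapsto\tfrac12\int_0^T\|\dot h\|_U^2\,ds$. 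This shows $\{I\le M\}$ is compact, so $I$ is a good rate function.

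For the Laplace upper bound (equivalently the LDP lower bound) I would use the representation
\[
-\varepsilon\log E\big[\exp(-\varepsilon^{-1}g(X^\varepsilon))\big]=\inf_{h\in\mathcal{A}}E\Big[\tfrac12\int_0^T\|\dot h(s)\|_U^2\,ds+g(Y^{\varepsilon}_h)\Big],
\]
where $Y^\varepsilon_h:=\Gamma^\varepsilon(W+\tfrac{1}{\sqrt\varepsilon}\int_0^\cdot\dot h\,ds)$. Fix $\eta>0$, choose $f_\eta\in\mathcal{E}$ with $g(f_\eta)+I(f_\eta)\le\inf_f\{g+I\}+\eta$, and a deterministic $h\in\mathcal{H}_0$ with $f_\eta=\Gamma^0(\int_0^\cdot\dot h\,ds)$ and $\tfrac12\int_0^T\|\dot h\|_U^2\,ds\le I(f_\eta)+\eta$. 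Using the constant family $h^\varepsilon\equiv h$ in some $\mathcal{A}_N$, condition (a) gives $d(Y^\varepsilon_h,f_\eta)\to 0$ in probability, so by boundedness and continuity of $g$, $E[g(Y^\varepsilon_h)]\to g(f_\eta)$; inserting this fixed $h$ into the infimum yields $\limsup_{\varepsilon\to0}(-\varepsilon\log E[\,\cdot\,])\le\tfrac12\int_0^T\|\dot h\|_U^2\,ds+g(f_\eta)\le\inf_f\{g+I\}+2\eta$, and letting $\eta\downarrow 0$ closes this direction.

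The Laplace lower bound (equivalently the LDP upper bound) is the main obstacle. I would pick near-optimal random controls $h^\varepsilon\in\mathcal{A}$ with
\[
-\varepsilon\log E\big[\exp(-\varepsilon^{-1}g(X^\varepsilon))\big]+\varepsilon\ge E\Big[\tfrac12\int_0^T\|\dot h^\varepsilon\|_U^2\,ds+g(Y^\varepsilon_{h^\varepsilon})\Big].
\]
Boundedness of $g$ forces the energies to be uniformly bounded, so $h^\varepsilon\in\mathcal{A}_N$ for some $N$; since $S_N$ with the weak topology is a compact Polish space, $\{h^\varepsilon\}$ is tight and, along a subsequence, converges in distribution to an $S_N$-valued $h$ (realized a.s. on a common space by Skorokhod). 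Condition (a) then gives $d(Y^\varepsilon_{h^\varepsilon},Z^\varepsilon)\to0$ in probability with $Z^\varepsilon=\Gamma^0(\int_0^\cdot\dot h^\varepsilon\,ds)$, while condition (b) applied to $h^\varepsilon\to h$ yields $Z^\varepsilon\to\Gamma^0(\int_0^\cdot\dot h\,ds)$, so that $Y^\varepsilon_{h^\varepsilon}\Rightarrow\Gamma^0(\int_0^\cdot\dot h\,ds)$. The delicate points here are the weak lower semicontinuity of the energy (needed to pass that term to the limit through Fatou) and the bookkeeping that lets the in-probability statement (a), phrased for a given control family, be combined with the random, only weakly convergent $h^\varepsilon$. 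Putting these together with the definition of $I$ gives
\[
\liminf_{\varepsilon\to 0}\Big(-\varepsilon\log E\big[\exp(-\varepsilon^{-1}g(X^\varepsilon))\big]\Big)\ge E\Big[\tfrac12\int_0^T\|\dot h\|_U^2\,ds+g\big(\Gamma^0(\int_0^\cdot\dot h\,ds)\big)\Big]\ge \inf_{f\in\mathcal{E}}\{g(f)+I(f)\},
\]
which establishes the Laplace principle and hence, since $I$ is good, the full large deviation principle.
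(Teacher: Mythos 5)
The paper does not prove this theorem itself; it imports it from Budhiraja--Dupuis \cite{BD} and Matoussi--Sabbagh--Zhang \cite{MSZ}, and your proposal correctly reconstructs exactly the argument underlying those references: reduction to the equivalent Laplace principle, the Bou\'{e}--Dupuis/Budhiraja--Dupuis variational representation for functionals of the cylindrical Wiener process, goodness of $I$ via weak compactness of $S_N$ together with condition (b) and weak lower semicontinuity of the energy, and the two Laplace bounds via conditions (a) and (b) with Skorokhod realization and Fatou. The one step you assert rather than prove --- passing from near-optimal controls whose energy is bounded only in expectation to controls lying in $\mathcal{A}_N$ --- is the standard stopping/truncation reduction carried out in \cite{BDM08}, which you correctly flag as the bookkeeping point, so your route coincides with the cited proof rather than offering a genuinely different one.
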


\subsection{Skeleton equation}
We begin by introducing the map $\Gamma_0$ that will be used to define the rate function and also used to verify conditions (a) and (b) in Theorem \ref{LDP}.

For any $h\in\mathcal{H}_0$, consider the following deterministic equation
\begin{eqnarray}\label{skeleton}
\left\{
  \begin{array}{ll}
   dX_h+ {\rm{div}} (a(\bar{u})X_h(t))dt=\Phi(\bar{u})\dot{h}(t)dt , &  \\
    X_h(x,0)=0, &
  \end{array}
\right.
\end{eqnarray}
 where $\bar{u}\equiv1$.
{\color{black}The equation (\ref{skeleton}) is called the skeleton equation and can be derived briefly as follows.} {\color{black}From (\ref{sbe-1}) and (\ref{r-12}), the process $X^{\varepsilon}=\frac{u^{\varepsilon}-\bar{u}}{\sqrt{\varepsilon}\lambda(\varepsilon)}$ satisfies}
\begin{equation}\label{rrr-15}
\left\{
  \begin{array}{ll}
   dX^{\varepsilon}+ {\rm{div}} \left(\frac{A(u^{\varepsilon})-A(\bar{u})}{\sqrt{\varepsilon}\lambda(\varepsilon)}\right)
   dt=\lambda(\varepsilon)^{-1}\Phi(\bar{u}+\sqrt{\varepsilon}\lambda(\varepsilon)X^{\varepsilon})dW(t),& \\
 X^{\varepsilon}(0)=0. &
  \end{array}
\right.
\end{equation}
Since $\bar{u}=1$, we have $A(u^{\varepsilon})-A(\bar{u})=A(\sqrt{\varepsilon}\lambda(\varepsilon)X^{\varepsilon}+1)-A(1)$.
It implies that $\frac{A(u^{\varepsilon})-A(\bar{u})}{\sqrt{\varepsilon}\lambda(\varepsilon)}$ is a function of $X^{\varepsilon}$.
Define
\begin{align}\label{r-23}
\Psi(\xi):= \frac{A(\sqrt{\varepsilon}\lambda(\varepsilon)\xi+1)-A(1)}{\sqrt{\varepsilon}\lambda(\varepsilon)},
\end{align}
by Hypothesis H, we know that $\Psi\in C^2(\mathbb{R};\mathbb{R}^d)$ and its derivative $\Psi'$ has at most polynomial growth. With the notation of $\Psi$, (\ref{rrr-15}) can be rewritten as \begin{equation}\label{r-21}
\left\{
  \begin{array}{ll}
   dX^{\varepsilon}+ {\rm{div}} \Psi(X^{\varepsilon})
   dt=\lambda(\varepsilon)^{-1}\Phi(\bar{u}+\sqrt{\varepsilon}\lambda(\varepsilon)X^{\varepsilon})dW(t),& \\
 X^{\varepsilon}(0)=0. &
  \end{array}
\right.
\end{equation}
As discussed above, for any $\varepsilon \in (0,1)$, (\ref{r-21}) is also a stochastic conservation law satisfying Hypothesis H.
The well-posedness of (\ref{r-21})  implies that there exists a measurable mapping $\Gamma^{\varepsilon}: C([0,T];\mathcal{U})\rightarrow L^1([0,T];L^1(\mathbb{T}^d))$ such that $\Gamma^{\varepsilon}(W(\cdot)):=X^{\varepsilon}(\cdot)$.

For any $h\in \mathcal{H}_0$, consider the following SPDE
\begin{equation*}
\left\{
  \begin{array}{ll}
    dX_h^{\varepsilon}+ {\rm{div}}  (\Psi(X^{\varepsilon}_h) )dt
=\lambda(\varepsilon)^{-1}\Phi\Big(\bar{u}+\sqrt{\varepsilon}\lambda(\varepsilon)X_h^{\varepsilon}\Big)\Big(dW(t)+\lambda(\varepsilon)\dot{h}(t)dt\Big)
, &  \\
   X_h^{\varepsilon}(0)=0 . &
  \end{array}
\right.
\end{equation*}
Letting $\varepsilon\rightarrow0$, it follows that $X_h^{\varepsilon}$ converges to $X_h$ in some suitable space with $X_h$ satisfying (\ref{skeleton}).

Regarding to (\ref{skeleton}), it is a special case of the skeleton equation (4.3) in \cite{DWZZ} with $A(\xi)=a(1)\xi$ and $\Phi(\xi)=\Phi(1)$.
Now, we introduce the definition of kinetic solution of (\ref{skeleton}) from \cite{DWZZ}.
\begin{dfn}(Kinetic solution)
A measurable function $X_h: \mathbb{T}^d\times [0,T]\rightarrow \mathbb{R}$ is said to be a kinetic solution to (\ref{skeleton}), if for any $p\geq 1$, there exists $C_p\geq 0$ such that
\begin{equation*}
\underset{0\leq t\leq T}{{\rm{ess\sup}}}\ \|X_h(t)\|^p_{L^p(\mathbb{T}^d)}\leq C_p,
\end{equation*}
and if there exists a measure $m_h\in \mathcal{M}^+_0(\mathbb{T}^d\times [0,T]\times \mathbb{R})$ such that $f_h:= I_{X_h>\xi}$ satisfies that for all $\varphi\in C^1_c(\mathbb{T}^d\times [0,T)\times \mathbb{R})$,
\begin{align*}\notag
&\int^T_0\langle f_h(t), \partial_t \varphi(t)\rangle dt+\langle f_0, \varphi(0)\rangle +\int^T_0\langle f_h(t), a(1)\cdot \nabla  \varphi (t)\rangle dt\\
&=-\sum_{k\geq 1}\int^T_0\int_{\mathbb{T}^d} g_k(x,\bar{u})\varphi (x,t, X_h(x,t))\dot{h}^k(t)dxdt + m_h(\partial_{\xi} \varphi),
\end{align*}
where $f_0(x,\xi)=I_{X_0>\xi}=I_{0>\xi}$.
\end{dfn}


Referring to \cite{DWZZ}, we have the following well-posedness result for (\ref{skeleton}).
\begin{thm}\label{thm-10}
(Well-posedness)
Under Hypothesis H, for any $T>0$, (\ref{skeleton}) admits a unique kinetic solution $X_h$ on $[0,T]$.
\end{thm}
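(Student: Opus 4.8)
The plan is to recognize (\ref{skeleton}) as a special instance of the general (deterministic) controlled scalar conservation law whose kinetic well-posedness is established in \cite{DWZZ}, and to obtain the conclusion by verifying that the specific data meet Hypothesis H and then adapting the kinetic existence/uniqueness machinery to this setting. Since $\bar{u}\equiv 1$ (as proved for (\ref{e-2})), the transport velocity $a(\bar{u})=a(1)$ is a fixed vector in $\mathbb{R}^d$ and the forcing operator $\Phi(\bar{u})=\Phi(1)$ is a fixed map, so the data of (\ref{skeleton}) correspond to the flux $A(\xi)=a(1)\xi$ and kernels $g_k(\cdot,1)$. These satisfy Hypothesis H: the flux derivative $a(1)$ is constant, so (\ref{qeq-22}) holds (with $p=1$, $\Gamma\equiv 0$), while the bounds (\ref{equ-28})--(\ref{equ-29}) are inherited from $g_k$ evaluated at the constant state $1$. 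With these verifications (\ref{skeleton}) falls precisely under the skeleton-equation theorem of \cite{DWZZ}, from which the statement follows; the remaining paragraphs sketch the underlying kinetic argument, which I would reproduce in this specialized form.

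For existence I would introduce the parabolic approximation
\begin{equation*}
dX_h^{\eta}+{\rm{div}}(a(1)X_h^{\eta})\,dt=\eta\Delta X_h^{\eta}\,dt+\Phi(1)\dot{h}(t)\,dt,\qquad X_h^{\eta}(0)=0,
\end{equation*}
which for each fixed $\eta>0$ is a linear parabolic equation with $L^2$-in-time forcing $\Phi(1)\dot{h}=\sum_{k\geq1}g_k(\cdot,1)\dot{h}^k$ and thus admits a unique solution $X_h^{\eta}\in L^2([0,T];H^1(\mathbb{T}^d))$. Testing against the regularised powers $\varphi_n'(X_h^{\eta})$ of Lemma \ref{lem-2}, integrating the viscous term by parts (nonpositive contribution), and bounding the forcing via Cauchy--Schwarz together with (\ref{equ-28}) by $\|\Phi(1)\dot{h}(t)\|_H^2\lesssim\|\dot{h}(t)\|_U^2$, I would use $\int_0^T\|\dot{h}(s)\|_U^2\,ds<\infty$ (since $h\in\mathcal{H}_0$) and Gronwall's lemma to obtain $L^p$ a priori bounds on $X_h^{\eta}$ uniform in $\eta$ for every $p\in[2,\infty)$. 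I would then write the kinetic formulation of $X_h^{\eta}$, in which the viscosity generates a nonnegative parabolic defect measure; the uniform bounds provide compactness of the kinetic functions $f_h^{\eta}=I_{X_h^{\eta}>\xi}$ and of the defect measures, so that along a subsequence $\eta\to 0$ one recovers a kinetic function $f_h=I_{X_h>\xi}$ and a limiting measure $m_h$ satisfying the vanishing property (\ref{equ-37}), hence $m_h\in\mathcal{M}^+_0(\mathbb{T}^d\times[0,T]\times\mathbb{R})$, yielding a kinetic solution of (\ref{skeleton}).

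For uniqueness I would run the doubling-of-variables argument of \cite{DWZZ} (structurally identical to the proof of Proposition \ref{prp-2}). Given two kinetic solutions $X_h^{1},X_h^{2}$, one tests $f_h^{1,\pm}\bar{f}_h^{2,\pm}+\bar{f}_h^{1,\pm}f_h^{2,\pm}$ against $\rho_{\gamma}(x-y)\psi_{\delta}(\xi-\zeta)$ and lets $\gamma,\delta\to 0$. The constant velocity $a(1)$ makes the flux-difference term $a(1)-a(1)=0$ vanish identically; the common additive forcing contributes a term controlled, via (\ref{equ-29}), by $\sum_{k\geq1}|g_k(x,1)-g_k(y,1)|^2\lesssim|x-y|^2$, which tends to $0$ with $\gamma$; and the two kinetic-measure terms are sign-definite and nonpositive. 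Combined with the identities (\ref{rrr-8}) and the identical zero initial data $X_h^1(0)=X_h^2(0)=0$, this gives $\|X_h^1(t)-X_h^2(t)\|_{L^1(\mathbb{T}^d)}=0$ for all $t$, hence uniqueness.

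The main obstacle I anticipate is the low time-regularity of the control: $\dot{h}$ lies only in $L^2([0,T];U)$, so $\Phi(1)\dot{h}$ cannot be treated as a bounded drift and must be absorbed in the energy and kinetic estimates purely through its square-integrability. Care is needed both in closing the uniform $L^p$ bounds on $X_h^{\eta}$ using only $\int_0^T\|\dot{h}\|_U^2\,ds$ and in verifying that the kinetic-measure bound (\ref{equ-37}) survives the passage $\eta\to 0$. These are exactly the points at which the constant-coefficient structure (flux $a(1)\xi$, fixed $\Phi(1)$) simplifies the general argument of \cite{DWZZ}, yet they remain the technical heart of the proof.
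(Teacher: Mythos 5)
Your proposal takes essentially the same route as the paper: the paper proves Theorem \ref{thm-10} simply by observing that (\ref{skeleton}) is the special case of the skeleton equation (4.3) in \cite{DWZZ} with flux $A(\xi)=a(1)\xi$ and constant noise coefficient $\Phi(1)$, and then invoking the well-posedness theorem proved there, which is exactly your first paragraph. Your remaining paragraphs (vanishing viscosity with uniform $L^p$ bounds for existence, doubling of variables with the flux-difference term vanishing for uniqueness) are a correct sketch of the machinery inside \cite{DWZZ} that the paper does not reproduce.
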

In view of Theorem \ref{thm-10}, we can define a mapping $\Gamma^0: C([0,T];\mathcal{U})\rightarrow L^1([0,T];L^1(\mathbb{T}^d))$ by
\begin{eqnarray*}
\Gamma^0(\check{h}):=\left\{
                   \begin{array}{ll}
                      X_{h}, & {\rm{if}}\ \check{h}= \int^{\cdot}_0 \dot{h}(s)ds, \ {\rm{for\ some}}\ h\in \mathcal{H}_0,\\
                    0, & {\rm{otherwise}},
                   \end{array}
                  \right.
\end{eqnarray*}
where $X_h$ is the solution of equation (\ref{skeleton}).

Moreover, by Theorem 5.6 in \cite{DWZZ}, we have the continuity of the mapping $\Gamma^0$.
\begin{thm}\label{cse}
Fix $N>0$. Assume $\{h_m\}_{m\geq1}\subset S_N$ that converges to some element $h$ as $m\rightarrow\infty$, then $X_m$ converges to $X_h$ in $L^1([0,T]; L^1(\mathbb{T}^d))$, where $X_m$ is the kinetic solution to the skeleton equation (\ref{skeleton}) with $h$ replaced by $h_m$.
\end{thm}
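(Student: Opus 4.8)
The plan is to exploit the fact that, because $\bar u\equiv 1$, the skeleton equation (\ref{skeleton}) degenerates into a constant-coefficient linear transport equation driven by an additive source that is linear in $\dot h$. Indeed, since $a(\bar u)=a(1)$ is a constant vector in $\mathbb{R}^d$, one has $\mathrm{div}(a(\bar u)X_h)=a(1)\cdot\nabla X_h$, while the source $\Phi(\bar u)\dot h(t)$ is the function $x\mapsto\sum_{k\ge1}g_k(x,1)\dot h^k(t)$. Solving by the method of characteristics (the characteristic reaching $(x,t)$ being $s\mapsto x-a(1)(t-s)$) yields the explicit representation
\begin{equation*}
X_h(x,t)=\int_0^t\sum_{k\ge1}g_k\big(x-a(1)(t-s),1\big)\,\dot h^k(s)\,ds .
\end{equation*}
Because $g_k(\cdot,1)$ is Lipschitz in $x$ by (\ref{equ-29}) and $\dot h\in L^2([0,T];U)$, this formula defines a function that is Lipschitz in $x$ and absolutely continuous in $t$; it is therefore a genuine weak solution of (\ref{skeleton}), and by the uniqueness asserted in Theorem \ref{thm-10} it must coincide with the kinetic solution $X_h$ (with vanishing kinetic measure). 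I would record the same formula for $X_m:=X_{h_m}$ with $\dot h_m$ in place of $\dot h$.

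Next I would introduce, for each fixed $(x,t)\in\mathbb{T}^d\times[0,T]$, the element $\Psi_{x,t}\in L^2([0,T];U)$ defined by $\Psi_{x,t}(s):=\mathbf{1}_{[0,t]}(s)\sum_{k\ge1}g_k(x-a(1)(t-s),1)\,e_k$, so that $X_h(x,t)=\langle\Psi_{x,t},\dot h\rangle_{L^2([0,T];U)}$ and $X_m(x,t)=\langle\Psi_{x,t},\dot h_m\rangle_{L^2([0,T];U)}$. The bound (\ref{equ-28}) gives $\|\Psi_{x,t}\|^2_{L^2([0,T];U)}=\int_0^t G^2(x-a(1)(t-s),1)\,ds\le 2D_0T$, uniformly in $(x,t)$. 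Weak convergence $h_m\to h$ in $S_N$ is exactly $\dot h_m\rightharpoonup\dot h$ weakly in $L^2([0,T];U)$, since the derivatives of elements of $\mathcal{H}_0$ exhaust $L^2([0,T];U)$; testing against the fixed admissible function $\Psi_{x,t}$ therefore produces the pointwise convergence $X_m(x,t)\to X_h(x,t)$ for every $(x,t)$.

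Finally I would upgrade this pointwise convergence to convergence in $L^1([0,T];L^1(\mathbb{T}^d))$. The Cauchy--Schwarz inequality together with the uniform bound on $\|\Psi_{x,t}\|_{L^2([0,T];U)}$ and the constraint $\int_0^T\|\dot h_m(s)\|_U^2\,ds\le N$ yields $|X_m(x,t)|\le\sqrt{2D_0TN}$ uniformly in $m$, $x$ and $t$, and the same bound holds for $X_h$. Since $\mathbb{T}^d\times[0,T]$ has finite Lebesgue measure, the dominated convergence theorem applies and gives $\|X_m-X_h\|_{L^1([0,T];L^1(\mathbb{T}^d))}\to 0$. The only genuinely delicate points in this route are the identification of the explicit characteristic formula with the kinetic solution (settled by the uniqueness in Theorem \ref{thm-10}) and the translation of the weak topology on $S_N$ into weak $L^2$ convergence of the $\dot h_m$; everything else is a routine weak-to-strong upgrade made available by the linear structure forced by $\bar u\equiv1$. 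The main obstacle would reappear only for a non-constant reference profile $\bar u$, where $a(\bar u)$ becomes an irregular variable velocity field, the explicit representation is lost, and one is driven back to the doubling-variables argument underlying Theorem 5.6 in \cite{DWZZ}.
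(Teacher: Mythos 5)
Your proof is correct, but it takes a genuinely different route from the paper: the paper does not argue this theorem directly at all, instead invoking Theorem 5.6 of \cite{DWZZ}, whose proof for the general skeleton equation proceeds through the kinetic formulation and a doubling-of-variables/continuity argument. You instead exploit the degeneration forced by $\bar u\equiv1$: the skeleton equation becomes constant-coefficient linear transport with an additive control, so $X_h$ has the explicit Duhamel representation along characteristics, $X_h(x,t)=\langle\Psi_{x,t},\dot h\rangle_{L^2([0,T];U)}$ with $\|\Psi_{x,t}\|^2_{L^2([0,T];U)}\le 2D_0T$ by (\ref{equ-28}), and the weak topology on $S_N$ (equivalently $\dot h_m\rightharpoonup\dot h$ in $L^2([0,T];U)$, via the isometry $h\mapsto\dot h$) immediately gives pointwise convergence $X_m(x,t)\to X_h(x,t)$; the uniform bound $|X_m|\le\sqrt{2D_0TN}$ and dominated convergence then upgrade this to $L^1([0,T];L^1(\mathbb{T}^d))$. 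This buys a short, self-contained, quantitative proof (even uniform boundedness of the $X_m$), at the price of being rigidly tied to the constant reference profile; the cited approach survives a variable velocity field $a(\bar u)$ where your explicit formula is unavailable, as you correctly note. One step you treat a bit lightly deserves a sentence in a full write-up: Theorem \ref{thm-10} gives uniqueness only within the class of \emph{kinetic} solutions, so you must check that your characteristics formula is a kinetic solution, i.e., that $f=I_{X_h>\xi}$ satisfies the kinetic formulation with $m_h=0$. Since your $X_h$ is Lipschitz in $x$ (by (\ref{equ-29})) and absolutely continuous in $t$, the chain rule $\partial_t\theta(X_h)+a(1)\cdot\nabla\theta(X_h)=\theta'(X_h)\Phi(1)\dot h$ holds for smooth $\theta$ (mollify in $x$; the commutator vanishes for constant velocity), which is exactly the required formulation with vanishing kinetic measure, together with the $L^p$ bounds demanded by the definition, which follow from your uniform sup bound. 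With that routine verification supplied, the argument is complete.
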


\subsection{Proof of moderate deviation principle}\label{MDPsec}
In this section, we focus on the proof of the main result. It reads as follows.
\begin{thm}\label{MDP} Assume Hypothesis H is in force.
For the kinetic solution $u^{\varepsilon}$ of (\ref{sbe-1}), $\frac{u^{\varepsilon}-\bar{u}}{\sqrt{\varepsilon}\lambda(\varepsilon)}$ satisfies LDP on $L^1([0,T];L^1(\mathbb{T}^d))$ with speed $\lambda^2(\varepsilon)$ and with rate function $I$ defined by (\ref{rf}), that is
\begin{description}
  \item[(I)] for any closed subset $F$ of $L^1([0,T];L^1(\mathbb{T}^d))$,
\begin{equation*}
\limsup_{\varepsilon\rightarrow0}\frac{1}{\lambda(\varepsilon)^2}\log P\Big(\frac{u^{\varepsilon}-\bar{u}}{\sqrt{\varepsilon}\lambda(\varepsilon)}\in F\Big)\leq-\inf_{x\in F}I(x);
\end{equation*}
  \item[(II)] for each open subset $G$ of $L^1([0,T];L^1(\mathbb{T}^d))$,
\begin{equation*}
\liminf_{\varepsilon\rightarrow0}\frac{1}{\lambda(\varepsilon)^2}\log P\Big(\frac{u^{\varepsilon}-\bar{u}}{\sqrt{\varepsilon}\lambda(\varepsilon)}\in G\Big)\geq-\inf_{x\in G}I(x).
\end{equation*}
\end{description}
\end{thm}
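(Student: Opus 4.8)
The plan is to invoke the weak convergence criterion of Theorem \ref{LDP} with $\mathcal{E}=L^1([0,T];L^1(\mathbb{T}^d))$, the maps $\Gamma^{\varepsilon}$ and $\Gamma^0$ constructed above, and effective speed $\lambda^2(\varepsilon)$. Condition (b) is precisely the continuity statement already recorded in Theorem \ref{cse}: if $h_m\to h$ in $S_N$ with the weak topology, then $\Gamma^0(\int_0^{\cdot}\dot h_m)=X_{h_m}\to X_h=\Gamma^0(\int_0^{\cdot}\dot h)$ in $L^1([0,T];L^1(\mathbb{T}^d))$. Hence the whole burden falls on condition (a): for each $N<\infty$ and every family $\{h^{\varepsilon}\}\subset\mathcal{A}_N$, I must show that $Y^{\varepsilon}$, the kinetic solution of the controlled equation obtained from (\ref{r-21}) by the Cameron--Martin shift, converges in probability in $L^1([0,T];L^1(\mathbb{T}^d))$ to $Z^{\varepsilon}=X_{h^{\varepsilon}}$, the kinetic solution of the skeleton equation (\ref{skeleton}) with $h=h^{\varepsilon}$.

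As explained in the introduction and carried out for the CLT, the kinetic solutions $Y^{\varepsilon}$ and $Z^{\varepsilon}$ cannot be compared directly by the doubling-of-variables method, because the two equations are not symmetric; so I would interpose parabolic approximations exactly as in Section \ref{section3}. Introduce the viscous controlled process $Y^{\varepsilon,\eta}$ (the equation for $Y^{\varepsilon}$ with an added term $\eta\Delta Y^{\varepsilon,\eta}\,dt$) and the viscous skeleton process $X^{\eta}_{h^{\varepsilon}}$ (equation (\ref{skeleton}) with $\eta\Delta X^{\eta}_{h^{\varepsilon}}\,dt$ added), and split
\begin{align*}
E\|Y^{\varepsilon}-Z^{\varepsilon}\|_{L^1([0,T];L^1(\mathbb{T}^d))}
&\leq E\|Y^{\varepsilon}-Y^{\varepsilon,\eta}\|_{L^1([0,T];L^1(\mathbb{T}^d))}
+E\|Y^{\varepsilon,\eta}-X^{\eta}_{h^{\varepsilon}}\|_{L^1([0,T];L^1(\mathbb{T}^d))}\\
&\quad+E\|X^{\eta}_{h^{\varepsilon}}-X_{h^{\varepsilon}}\|_{L^1([0,T];L^1(\mathbb{T}^d))}.
\end{align*}
The first term I would treat by a doubling-of-variables estimate between $Y^{\varepsilon}$ and its own viscous approximation, in the spirit of Proposition \ref{prp-2}, yielding a bound that tends to $0$ as $\eta\to0$ uniformly in $\varepsilon\in(0,1)$; the essential input is a family of moment bounds $\sup_{\varepsilon,\eta}E\|Y^{\varepsilon}\|^q_{L^qL^q}+\sup_{\varepsilon,\eta}E\|Y^{\varepsilon,\eta}\|^q_{L^qL^q}\lesssim C(q,T,N)$ that must now hold uniformly over controls in $\mathcal{A}_N$. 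The third term is the vanishing-viscosity convergence for the skeleton equation, uniform over $S_N$, which follows along the lines of (\ref{r-3}) and the well-posedness and stability results of \cite{DWZZ}. For the middle term, with $\eta>0$ fixed, both processes are parabolic and live in $H$, so I would apply It\^{o}'s formula to $\|Y^{\varepsilon,\eta}-X^{\eta}_{h^{\varepsilon}}\|_H^2$ and close a Gronwall estimate, exactly as in Proposition \ref{prp-4}; here one uses $\sqrt{\varepsilon}\lambda(\varepsilon)\to0$ and $\lambda(\varepsilon)^{-1}\to0$ from (\ref{e-43}) to make the flux-difference, the martingale part and the control-mismatch terms vanish as $\varepsilon\to0$. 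Choosing $\eta$ small first and then letting $\varepsilon\to0$, and passing from convergence in expectation to convergence in probability by Markov's inequality, verifies (a).

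The main obstacle is the first, doubling-of-variables, term in the presence of the additional control drift. In the CLT the analogous quantity $\tilde K_1$ vanished only after the symmetrizing viscous approximation and a careful optimization of the mollification scales $\gamma,\delta$ against $\eta$; here the kinetic formulations of $Y^{\varepsilon}$ and $Y^{\varepsilon,\eta}$ carry the extra drift $\Phi(\bar u+\sqrt{\varepsilon}\lambda(\varepsilon)\,\cdot)\dot h^{\varepsilon}$ coming from the shift, which produces new terms in the doubling computation. Controlling these uniformly in $\varepsilon$ and, crucially, uniformly over all $h^{\varepsilon}\in\mathcal{A}_N$ is the technical crux: one must absorb the control using only the energy bound $\int_0^T\|\dot h^{\varepsilon}(s)\|_U^2\,ds\leq N$ together with the growth conditions (\ref{equ-28})--(\ref{equ-29}) of Hypothesis H, and verify that the resulting a priori moment estimates remain uniform over the level set $S_N$. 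Once these uniform bounds and the symmetrized doubling estimate are in place, the remaining Gronwall and vanishing-viscosity arguments are the same as those already developed for the CLT.
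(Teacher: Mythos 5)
Your proposal is correct and follows essentially the same route as the paper: the paper likewise reduces Theorem \ref{MDP} to conditions (a) and (b) of Theorem \ref{LDP}, settles (b) by Theorem \ref{cse}, and verifies (a) through exactly your three-term splitting with the viscous controlled process and the viscous skeleton process, handling the vanishing-viscosity skeleton term via Proposition 5.5 of \cite{DWZZ} and treating the other two terms by the doubling-of-variables and It\^{o}--Gronwall arguments of Propositions \ref{prp-2} and \ref{prp-4}. Your identification of the technical crux (uniformity over $\mathcal{A}_N$ of the moment bounds and of the extra control-drift terms in the doubling computation) matches what the paper leaves implicit when it writes that the remaining two terms ``can be treated similarly to the proof of CLT.''
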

According to Theorem \ref{LDP}, in order to establish Theorem \ref{MDP}, we only need to verify sufficient conditions (a) and (b). Clearly, the condition (b) has been proved by Theorem \ref{cse}. Therefore, it remains to prove condition (a).

For any $\{h^{\varepsilon}\}_{\varepsilon>0}\subset\mathcal{A}_N$, we consider
\begin{eqnarray}\label{rrr-6}
\left\{
  \begin{array}{ll}
  d\bar{X}^{\varepsilon}+ {\rm{div}} (\Psi(\bar{X}^{\varepsilon}))dt
=\lambda^{-1}(\varepsilon)\Phi\Big(1+\sqrt{\varepsilon}\lambda(\varepsilon)\bar{X}^{\varepsilon}\Big)dW(t)+\Phi(1+\sqrt{\varepsilon}\lambda(\varepsilon)\bar{X}^{\varepsilon})\dot{h}^{\varepsilon}(t)d t,\\
\bar{X}^{\varepsilon}(0)=0,
  \end{array}
\right.
\end{eqnarray}
where $\Psi$ is defined by (\ref{r-23}).
Clearly, (\ref{rrr-6}) is a special case of the stochastic controlled equation (6.1) in \cite{DWZZ}, then we deduce that there exists a unique kinetic solution $\bar{X}^{\varepsilon}$ satisfying that for any $p\geq1$
\begin{align}\label{rrr-14}
\sup_{\varepsilon\in (0,1)}E\Big(\underset{0\leq t\leq T}{{\rm{ess\sup}}}\ \|\bar{X}^{\varepsilon}(t)\|_{L^p(\mathbb{T}^d)}^p\Big)\leq C_p,
\end{align}
and there exists a kinetic measure  $\bar{m}^\varepsilon\in\mathcal{M}^+_0(\mathbb{T}^d\times [0,T]\times \mathbb{R})$  such that $f^{\varepsilon}:=I_{\bar{X}^{\varepsilon}>\xi}$ satisfies for any $\varphi\in C_c^1(\mathbb{T}^d\times[0,T)\times\mathbb{R})$,
\begin{align*}
&\int_0^T\langle f^{\varepsilon}(t),\partial_t\varphi(t)\rangle dt+<f_0,\varphi(0)>
+\int_0^T\langle f^{\varepsilon}(t),\Psi'(\xi)\cdot \nabla \varphi(t)\rangle dt\\
=&-\lambda^{-1}(\varepsilon)\sum_{k\geq1}\int_0^T\int_{\mathbb{T}^d}g_k(x,1+\sqrt{\varepsilon}\lambda(\varepsilon)\bar{X}^{\varepsilon})\varphi(x,t,\bar{X}^{\varepsilon})dxd\beta_k(t)\\
&-\frac{1}{2\lambda^2(\varepsilon)}\int_0^T\int_{\mathbb{T}^d}\partial_{\xi}\varphi(x,t,\bar{X}^{\varepsilon})G^2(x,1+\sqrt{\varepsilon}\lambda(\varepsilon)\bar{X}^{\varepsilon})dxdt\\
&-\sum_{k\geq1}\int_0^T\int_{\mathbb{T}^d}\varphi(x,t,\bar{X}^{\varepsilon})g_k(x,1+\sqrt{\varepsilon}\lambda(\varepsilon)\bar{X}^{\varepsilon})\dot{h}^{\varepsilon,k}(t)dxdt+\bar{m}^{\varepsilon}(\partial_{\xi}\varphi),\ \ a.s.,
\end{align*}
where $\{\dot{h}^{\varepsilon,k}\}_{k\geq1}$ are the Fourier coefficients of $\dot{h}^{\varepsilon}$, that is $\dot{h}^{\varepsilon}(t)=\sum_{k\geq1}\dot{h}^{\varepsilon,k}(t)e_k$.
According to the definition of
$\Gamma^{\varepsilon}$, it follows that $\Gamma^{\varepsilon}(W(\cdot)+\lambda(\varepsilon)\int_0^{\cdot}\dot{h}^{\varepsilon}(s)ds)=\bar{X}^{\varepsilon}(\cdot)$.


Now, we are in a position to verify the condition (a) in Theorem \ref{LDP}.
\begin{thm}
For every $N<\infty$, let $\{h^{\varepsilon}\}_{\varepsilon>0}\subset\mathcal{A}_N$. Then
\begin{equation}\label{r-11}
\Big\|\Gamma^{\varepsilon}\left(W(\cdot)+\lambda(\varepsilon)\int_0^{\cdot}\dot{h}^{\varepsilon}(s)ds\right)-\Gamma^0\left(\int_0^{\cdot}\dot{h}^{\varepsilon}(s)ds\right)\Big\|_{L^1([0,T];L^1(\mathbb{T}^d))}\rightarrow0,
\end{equation}
in probability, as $\varepsilon\rightarrow0$.
\end{thm}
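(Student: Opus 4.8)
The plan is to show that $\bar X^{\varepsilon}:=\Gamma^{\varepsilon}\big(W(\cdot)+\lambda(\varepsilon)\int_0^{\cdot}\dot h^{\varepsilon}ds\big)$, the kinetic solution of the stochastic controlled equation (\ref{rrr-6}), converges to $X_{h^{\varepsilon}}:=\Gamma^0\big(\int_0^{\cdot}\dot h^{\varepsilon}ds\big)$, the kinetic solution of the skeleton equation (\ref{skeleton}) with $h=h^{\varepsilon}$, in $L^1([0,T];L^1(\mathbb{T}^d))$ in probability. Heuristically the limit is transparent: since $\sqrt{\varepsilon}\lambda(\varepsilon)\to0$ and $\lambda(\varepsilon)\to\infty$, the flux obeys $\Psi(\xi)\to a(1)\xi$ (because $\Psi'(\xi)=a(\sqrt{\varepsilon}\lambda(\varepsilon)\xi+1)$), the coefficient $\Phi(1+\sqrt{\varepsilon}\lambda(\varepsilon)\,\cdot\,)\to\Phi(\bar u)$, and the genuinely stochastic forcing $\lambda^{-1}(\varepsilon)\Phi(\cdot)dW$ vanishes, leaving precisely the controlled drift $\Phi(\bar u)\dot h^{\varepsilon}dt$ of (\ref{skeleton}). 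As in the central limit theorem, however, $\bar X^{\varepsilon}$ and $X_{h^{\varepsilon}}$ are not symmetric in the sense demanded by the doubling of variables, so the comparison cannot be made directly and must be routed through viscous approximations.

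Concretely I would introduce the viscous controlled process $\bar X^{\varepsilon,\eta}$ solving (\ref{rrr-6}) with the extra term $\eta\Delta\bar X^{\varepsilon,\eta}dt$, and the viscous skeleton process $X_{h^{\varepsilon}}^{\eta}$ solving (\ref{skeleton}) with $\eta\Delta X_{h^{\varepsilon}}^{\eta}dt$ added, both with zero initial data. After recording uniform moment bounds $\sup_{\varepsilon,\eta}E\big(\mathrm{ess\,sup}_{t}\|\bar X^{\varepsilon,\eta}\|_{L^p}^p\big)\le C_p$ parallel to (\ref{rrr-14}) (via the energy method as in Lemma \ref{lem-2}) and noting that $M_0:=\sup_{\varepsilon\in(0,1)}\sqrt{\varepsilon}\lambda(\varepsilon)<\infty$, so that all scaled moments $E\|\sqrt{\varepsilon}\lambda(\varepsilon)\bar X^{\varepsilon,\eta}\|^p$ stay bounded, I split (writing $L^1_tL^1_x$ for $L^1([0,T];L^1(\mathbb{T}^d))$)
\begin{align*}
\|\bar X^{\varepsilon} - X_{h^{\varepsilon}}\|_{L^1_tL^1_x}
\le \|\bar X^{\varepsilon} - \bar X^{\varepsilon,\eta}\|_{L^1_tL^1_x}
+ \|\bar X^{\varepsilon,\eta} - X_{h^{\varepsilon}}^{\eta}\|_{L^1_tL^1_x}
+ \|X_{h^{\varepsilon}}^{\eta} - X_{h^{\varepsilon}}\|_{L^1_tL^1_x}.
\end{align*}
The first and third terms will be controlled as $\eta\to0$ and the middle one as $\varepsilon\to0$ with $\eta$ fixed, after which a $\delta/3$-type argument with Chebyshev's inequality delivers convergence in probability.

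For the first term I would run the doubling-of-variables argument of Proposition \ref{prp-2} with the two kinetic formulations of $\bar X^{\varepsilon}$ and $\bar X^{\varepsilon,\eta}$. The flux scaling $\sqrt{\varepsilon}$ there is replaced by $\sqrt{\varepsilon}\lambda(\varepsilon)$ and the It\^o correction acquires the prefactor $\lambda^{-2}(\varepsilon)$, so the analogues $\tilde K_1,\tilde K_2,\tilde K_3$ are estimated exactly as before and, with $\delta=\gamma^{4/3}$, $\gamma=\eta^{1/3}$ and $\sqrt{\varepsilon}\lambda(\varepsilon)\le M_0$, tend to $0$ as $\eta\to0$ uniformly in $\varepsilon$. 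The genuinely new contribution is the control drift $\Phi(1+\sqrt{\varepsilon}\lambda(\varepsilon)\,\cdot\,)\dot h^{\varepsilon}$, which the two equations share; tracking it through the tensorization and letting $\gamma,\delta\to0$ shows that it reduces to $\sum_{k\ge1}\dot h^{\varepsilon,k}\int_{\mathbb{T}^d}\big(g_k(x,\sqrt{\varepsilon}\lambda(\varepsilon)\bar X^{\varepsilon}+1)-g_k(x,\sqrt{\varepsilon}\lambda(\varepsilon)\bar X^{\varepsilon,\eta}+1)\big)\,\mathrm{sgn}(\bar X^{\varepsilon}-\bar X^{\varepsilon,\eta})\,dx$, which by (\ref{equ-29}), Cauchy--Schwarz in $k$ and $\int_0^T\|\dot h^{\varepsilon}\|_U^2ds\le N$ is bounded in absolute value by $\sqrt{D_1}\,\sqrt{\varepsilon}\lambda(\varepsilon)\int_0^t\|\dot h^{\varepsilon}\|_U\|\bar X^{\varepsilon}-\bar X^{\varepsilon,\eta}\|_{L^1}ds\lesssim\sqrt{\varepsilon}\lambda(\varepsilon)$ after inserting the moment bounds. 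This is a term that vanishes in the eventual $\varepsilon\to0$ limit rather than in the $\eta\to0$ limit, which the $\delta/3$ scheme accommodates. The third term is the vanishing-viscosity statement for the (linear) skeleton equation; since $h^{\varepsilon}(\omega)\in S_N$ for a.e.\ $\omega$, it follows uniformly in $\varepsilon$ from the vanishing-viscosity construction of the skeleton solution in \cite{DWZZ}.

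For the middle term I would set $R^{\varepsilon,\eta}:=\bar X^{\varepsilon,\eta}-X_{h^{\varepsilon}}^{\eta}$ and apply the It\^o formula to $\|R^{\varepsilon,\eta}(t)\|_H^2$ as in Proposition \ref{prp-4}. Writing the flux difference as $\big[\Psi(\bar X^{\varepsilon,\eta})-a(1)\bar X^{\varepsilon,\eta}\big]+a(1)R^{\varepsilon,\eta}$, the linear part $a(1)R^{\varepsilon,\eta}$ drops out upon integration by parts on the torus, the remainder is $O(\sqrt{\varepsilon}\lambda(\varepsilon))$ in $L^2$ by Taylor expansion and the moment bounds, the noise term carries the prefactor $\lambda^{-2}(\varepsilon)$ and vanishes, and the control difference $\big(\Phi(1+\sqrt{\varepsilon}\lambda(\varepsilon)\bar X^{\varepsilon,\eta})-\Phi(1)\big)\dot h^{\varepsilon}$ is split by Young's inequality weighted by $\|\dot h^{\varepsilon}\|_U^2$, so that its dangerous part becomes $\int_0^t\|R^{\varepsilon,\eta}(s)\|_H^2ds$ with a \emph{deterministic} Gronwall coefficient, while its remaining part is $\lesssim\varepsilon\lambda^2(\varepsilon)N\to0$ by (\ref{equ-29}) and $\int_0^T\|\dot h^{\varepsilon}\|_U^2ds\le N$. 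Gronwall's inequality then gives $\sup_{t}E\|R^{\varepsilon,\eta}(t)\|_H^2\le C(\eta,N,T)\big(\varepsilon+\varepsilon\lambda^2(\varepsilon)+\lambda^{-2}(\varepsilon)+\cdots\big)\to0$ as $\varepsilon\to0$ for fixed $\eta$, so the middle term tends to $0$ in $L^1(\Omega)$ through the embedding $H\hookrightarrow L^1(\mathbb{T}^d)$. I expect the doubling-of-variables bookkeeping for the shared control drift --- verifying that it indeed collapses to the sign-weighted difference above and does not spoil the uniform-in-$\varepsilon$ control of the first term --- to be the main obstacle; the rest follows the templates of Propositions \ref{prp-2} and \ref{prp-4} with the systematic replacement of $\sqrt{\varepsilon}$ by $\sqrt{\varepsilon}\lambda(\varepsilon)$ and the extra factor $\lambda^{-2}(\varepsilon)$ on the noise.
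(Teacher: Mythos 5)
Your proposal follows essentially the same route as the paper: the identical three-term split through the viscous approximations $\bar{X}^{\varepsilon,\eta}$ and $Y^{\varepsilon,\eta}$, the vanishing-viscosity result for the skeleton equation (Proposition 5.5 of \cite{DWZZ}) for the third term, and the arguments of Propositions \ref{prp-2} and \ref{prp-4} --- with $\sqrt{\varepsilon}$ systematically replaced by $\sqrt{\varepsilon}\lambda(\varepsilon)$ and the prefactor $\lambda^{-2}(\varepsilon)$ on the It\^{o} correction --- for the remaining two. The paper in fact omits those two estimates as ``similar to the proof of CLT,'' and your treatment of the shared control drift (Young's inequality weighted by $\|\dot{h}^{\varepsilon}\|_U^2$ yielding a deterministic Gronwall coefficient, with the residual $O(\sqrt{\varepsilon}\lambda(\varepsilon))$ piece absorbed at the $\varepsilon\rightarrow 0$ stage of the $\delta/3$ scheme) correctly supplies the details the paper leaves to the reader.
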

\begin{proof}
Recall that $\bar{X}^{\varepsilon}=\Gamma^{\varepsilon}\Big(W(\cdot)+\lambda(\varepsilon)\int_0^{\cdot}\dot{h}^{\varepsilon}(s)ds\Big)$ is the kinetic solution to (\ref{rrr-6}) and $Y^{\varepsilon}(\cdot):=\Gamma^0\Big(\int_0^{\cdot}\dot{h}^{\varepsilon}(s)ds\Big)$ is the kinetic solution to the skeleton equation (\ref{skeleton}) with $h$ replaced by $h^{\varepsilon}$.
As discussed in the section of the central limit theorem, (\ref{r-11}) cannot be established  by applying the doubling variables method directly to $\bar{X}^{\varepsilon}$ and $Y^{\varepsilon}$ due to the lack of symmetry. To solve it, we adopt the same method as the proof of CLT to introduce some auxiliary parabolic approximation processes, which are symmetric with the original equations. Concretely, for $\eta>0$, let $\bar{X}^{\varepsilon,\eta}$ and $Y^{\varepsilon,\eta}$ be defined by
\begin{eqnarray*}
\left\{
  \begin{array}{ll}
  d\bar{X}^{\varepsilon,\eta}+ {\rm{div}} (\Psi(\bar{X}^{\varepsilon,\eta}))dt
=\eta\Delta \bar{X}^{\varepsilon,\eta}dt+ \lambda^{-1}(\varepsilon)\Phi\Big(1+\sqrt{\varepsilon}\lambda(\varepsilon)\bar{X}^{\varepsilon,\eta}\Big)dW(t)
+\Phi(1+\sqrt{\varepsilon}\lambda(\varepsilon)\bar{X}^{\varepsilon,\eta})\dot{h}^{\varepsilon}(t)d t,\\
\bar{X}^{\varepsilon,\eta}(0)=0,
  \end{array}
\right.
\end{eqnarray*}
and
\begin{eqnarray*}
\left\{
  \begin{array}{ll}
   dY^{\varepsilon,\eta}+ {\rm{div}} (a(\bar{u})Y^{\varepsilon,\eta}(t))dt=\eta\Delta Y^{\varepsilon,\eta}dt+\Phi(\bar{u})\dot{h}^{\varepsilon}(t)dt , &  \\
    Y^{\varepsilon,\eta}(x,0)=0. &
  \end{array}
\right.
\end{eqnarray*}
With the notations of $\bar{X}^{\varepsilon,\eta}$ and $Y^{\varepsilon,\eta}$, it follows that
\begin{align}\notag
&E\|\bar{X}^{\varepsilon}-Y^{\varepsilon}\|_{L^1([0,T];L^1(\mathbb{T}^d))}\\ \notag
\leq &E\|X^{\varepsilon}-X^{\varepsilon,\eta}\|_{L^1([0,T];L^1(\mathbb{T}^d))}
+E\Big\|X^{\varepsilon,\eta}-Y^{\varepsilon,\eta}\Big\|_{L^1([0,T];L^1(\mathbb{T}^d))}
+E\|Y^{\varepsilon,\eta}-Y^{\varepsilon}\|_{L^1([0,T];L^1(\mathbb{T}^d))}.
\end{align}
Referring to Proposition 5.5 in \cite{DWZZ}, it gives
\begin{align*}
\lim_{\eta\rightarrow 0}\sup_{\varepsilon\in (0,1)}  E\|Y^{\varepsilon,\eta}-Y^{\varepsilon}\|_{L^1([0,T];L^1(\mathbb{T}^d))}=0.
\end{align*}
The remaining two terms can be treated similarly to the proof of CLT, we omit it.

\end{proof}

\noindent{\bf  Acknowledgements}\quad  This work is partly supported by Beijing Natural Science Foundation (No. 1212008), National Natural Science Foundation of China (No. 12171032,11971227,12071123), Key Laboratory of Random Complex Structures and Data Science, Academy of Mathematics and Systems Science, Chinese Academy of Sciences (No. 2008DP173182), Beijing Institute of Technology Research Fund Program for Young Scholars and MIIT Key Laboratory of Mathematical Theory and Computation in Information Security.

\def\refname{ References}

\end{document}